\documentclass[reqno]{amsart}

%Packages
\usepackage{verbatim}
\usepackage{graphicx}
\usepackage{amssymb}
\usepackage{esint}
%\usepackage{showkeys}

%Preamble commands such as \newcommand and \newtheorem

% Norms

% Derivatives

% Integrals

% expand margins to accomodate graphics
\setlength{\hoffset}{-.75in}
\setlength{\textwidth}{6.5in}
\setlength{\voffset}{-.5in}
\setlength{\textheight}{9.0in}

%Enviroments:
\theoremstyle{plain}
\newtheorem{thm}{Theorem}[section]
\newtheorem{lemma}[thm]{Lemma}
\newtheorem{defn}[thm]{Definition}

\numberwithin{equation}{section}

\newtheorem{rem}[thm]{Remark}

\begin{document}
%Title information
\title[Landau-Lifshitz-Gilbert equation]{Global Well-Posedness of the Landau-Lifshitz-Gilbert equation for initial data in Morrey space} %[short form]{long form}

% Author 1 Information
\author[J. Lin]{Junyu Lin} %[short form]{long form}
\author[B. Lai]{Baishun Lai} %[short form]{long form}
\author[C. Wang]{Changyou Wang}
% Address where research was carried out
\address{Department of Mathematics\\ South China  University of Technology\\ Guangzhou,  Guangdong 510640, China}
\address{School of Mathematics and Information Science\\
Henan University\\ Kaifeng 475004, Henan, China}
\address{Department of Mathematics\\
University of Kentucky \\
Lexington, KY 40506}

\email{scjylin@scut.edu.cn,  laibaishun@gmail.com, cywang@ms.uky.edu} % Email
%\urladdr{www.ms.uky.edu/~hineman/} % Web
\date{\today}
\keywords{Landau-Lifshitz-Gilbert equation, dissipative Schr\"odinger equation, Morrey space} %keywords optional
\subjclass[2000]{}
%for subject classification codes see http://www.ams.org/mathscinet/msc/msc2010.html

% Abstract should be 150 words for a short paper and 300 words for a long paper.
\begin{abstract}
We establish the global well-posedness of the Landau-Lifshitz-Gilbert equation in $\mathbb R^n$ for any initial data ${\bf m}_0\in H^1_*(\mathbb R^n,\mathbb S^2)$
whose gradient belongs to the Morrey space $M^{2,2}(\mathbb R^n)$ with small norm $\displaystyle\|\nabla {\bf m}_0\|_{M^{2,2}(\mathbb R^n)}$. The method is based on
priori estimates of a dissipative Schr\"odinger equation of Ginzburg-Landau types obtained from the Landau-Lifshitz-Gilbert equation by
the moving frame technique.
\end{abstract}
\maketitle %arranges what you've just done in a nice way

\section{Introduction}
\setcounter{equation}{0}
\setcounter{thm}{0}

%\setcounter{section}{0} \setcounter{equation}{0}
%\section{Introduction }

The Landau-Lifshitz-Gilbert equation, originally introduced by Landau and Lifshitz \cite{LL} \cite{G}, serves as the basic
evolution equation for the spin fields in the continuum theory of ferromagnetism.
Let ${\bf m}:\mathbb R^n\times(0,+\infty)\rightarrow \mathbb S^2$ denote the spin director field
with values in the unit sphere $\mathbb S^2\subset\mathbb R^3$.
The  Landau-Lifshitz-Gilbert equation (LLG) is given by
\begin{equation}
\label{1.1}
\begin{cases}
\partial_t {\bf m}=-{\bf m}\times\triangle {\bf m}-\lambda {\bf m}\times ({\bf m}\times\triangle {\bf m}) & \ \mbox{in}\ \mathbb R^n\times(0,+\infty), \\
\ \ {\bf m}={\bf m}_0 & \ \mbox{in}\ \mathbb R^n\times\{0\},
\end{cases}
\end{equation}
where $\lambda>0$ is a Gilbert damping parameter, $\times$ is the cross product in $\mathbb R^3$, and ${\bf m}_0:\mathbb R^n\to\mathbb S^2$
is an initial data. Note that $-{\bf m}\times {\bf m}\times \Delta {\bf m}=\Delta {\bf m}+|\nabla{\bf m}|^2{\bf m}$ is the tension field of ${\bf m}:\mathbb R^n\to\mathbb S^2$.
Hence LLG (\ref {1.1}) is a hybrid of the heat flow and the Schr\"odinger flow of harmonic maps to $\mathbb S^2$ and is an evolution equation
of parabolic types. A Liapunov functional of (\ref{1.1}) is given by the Dirichlet energy
$$E({\bf m})=\frac12\int_{\mathbb R^n} |\nabla{\bf m}|^2\,dx.$$

Since LLG (\ref{1.1}) is energy critical when $n=2$,  the global existence of weak solutions with at most finitely many singularities
and the uniqueness among energy non-increasing solutions has been established
by \cite{GH} and \cite{Harp1, Harp2}, which is based on arguments similar to Struwe's approach to the heat flow of harmonic maps \cite{Struwe1}.
We would like to point out that the existence of finite time singularity of (\ref{1.1}) in dimension $n=2$ has not been shown yet.

When $n\ge 3$, LLG (\ref{1.1}) is super-critical with respect to the Dirichlet energy. Although one can show the global existence of  weak solutions,
the question of regularity and uniqueness of weak solutions becomes a delicate issue.  In dimensions $n=3, 4$, partial regularity for suitable weak
solutions to (\ref{1.1}) has been obtained by Moser \cite{Moser}, while the existence of partially regular weak solutions to (\ref{1.1}) has been
established by Melcher \cite{Melcher0} and Wang \cite{Wang}. However, since the arguments in \cite{Moser, Melcher0, Wang} are perturbation methods based on energy monotonicity
inequalities and elliptic estimates on generic time slices, they fail to work for $n\ge 5$. It may be worthy to point out that neither
the Bochner formula nor Struwe's parabolic energy monotonicity formula for the heat flow of harmonic maps (or approximated harmonic maps) \cite{Struwe2,
Chen-Struwe} seems to be available  for (\ref{1.1}). We would also like to point out that the existence of finite time singularity of LLG (\ref{1.1}) in dimensions
$n=3,4$ has been shown by \cite{Ding-Wang}.

Very recently, Melcher \cite{Melcher} investigated the global well-posedness of LLG (\ref{1.1}) in dimensions $n\ge 3$ for initial data ${\bf m}_0$ in the scaling
invariant homogeneous $\dot{W}^{1,n}(\mathbb R^n)$ space with small $\dot{W}^{1,n}$-norm, see also Seo \cite{Seo} for related works. The idea in \cite{Melcher} involves a transformation of
LLG (\ref{1.1}) by the technique of moving frame to a dissipative Schr\"odinger equation of complex Ginzburg-Landau type nonlinearities.
It is natural to establish the global well-posedness of (\ref{1.1}) for initial data ${\bf m}_0$ in larger classes of function spaces.
Indeed,  we are able to prove the global well-posedness of (\ref{1.1}) for initial data whose gradient is in certain Morrey spaces with small Morry norm.

In order to state our main results, we first recall

\begin{defn}\label{de:1.1} For $1\leq p<+\infty,$ $0\leq q\leq n,$ the Morrey space $M^{p,q}(\mathbb R^n)$
 is defined by
\begin{equation}
 M^{p,q}(\mathbb R^n)
:=\Big\{f\in L^p_{{\rm{loc}}}(\mathbb R^n):\ \big\|f\big\|_{M^{p,q}(\mathbb R^n)}^p=\sup\limits_{x\in \mathbb R^n,0<r<+\infty}r^{q-n}\int_{B_r(x)}|f(y)|^p
\,dy<+\infty\Big\}.
\end{equation}
\end{defn}

Observe that $M^{p,n}(\mathbb R^n)=L^p(\mathbb R^n)$, and from the point of view of scalings, $M^{p,p}(\mathbb R^n)$ behaves like $L^n(\mathbb R^n)$.

Now we are ready to state our main theorem.
\begin{thm}\label{th:1.1} For $\lambda>0$ and $n\ge 2$, there exist $\epsilon_0>0$ and $c_0>0$ depending only on $n$ and $\lambda$ with the following properties:
\begin{itemize}
\item[(i)]{\rm{(global existence)}} If ${\bf m}_0:\mathbb R^n\rightarrow \mathbb S^2$
satisfies ${\bf m}_0-{\bf m}_\infty\in L^2(\mathbb R^n)$ for some ${\bf m}_\infty\in \mathbb S^2$,  with $\nabla {\bf m}_0\in M^{2,2}(\mathbb R^n)$ satisfying
\begin{equation}\label{1.2}\big\|\nabla {\bf m}_0\big\|_{M^{2,2}(\mathbb R^n)}\leq\epsilon_0,
\end{equation}
then there exists a global solution ${\bf m}:\mathbb R^n\times [0,+\infty)\to\mathbb S^2$ to LLG (\ref{1.1}) such that
\begin{equation}\label{x-bound}
\sup_{t\ge 0}\big\|\nabla {\bf m}\big\|_{M^{2,2}(\mathbb R^n)}\le c_0 \big\|\nabla {\bf m}_0\big\|_{M^{2,2}(\mathbb R^n)}.
\end{equation}
Moreover, ${\bf m}\in C^\infty(\mathbb R^n\times (0,+\infty),\mathbb S^2)$ and satisfies
\begin{equation}
\sup_{t>0}\ t^{\frac{k}2}\Big\|\nabla^k{\bf m}(t)\Big\|_{L^\infty(\mathbb R^n)}\le c(n,k)\big\|\nabla{\bf m}_0\big\|_{M^{2,2}(\mathbb R^n)},
\ \forall k\ge 1, \ \label{smoothness}
\end{equation}
and ${\bf m}(t)\rightarrow {\bf m}_0$ in $H^1_{\rm{loc}}(\mathbb R^n, \mathbb S^2)$ as $t\rightarrow 0^+$.
\item[(ii)]{\rm{(uniqueness)}} If, in addition, $\nabla {\bf m}_0\in L^2(\mathbb R^n)$, then the global solution ${\bf m}$ also satisfies
that ${\bf m}\in C([0,+\infty), H^1_*(\mathbb R^n))\ {\rm{with}}\ \partial_t {\bf m}\in L^2([0,+\infty), L^2(\mathbb R^n)),$ and
the energy inequality:
\begin{equation}\label{energy_ineq0}
E({\bf m}(t))+\frac{\lambda}{1+\lambda^2}\int_0^t\int_{\mathbb R^n}|\partial_t{\bf m}|^2\,dxdt\le E({\bf m}_0), \ \forall\ t>0.
\end{equation}
Moreover, ${\bf m}$ is unique in its own class.
\end{itemize}
\end{thm}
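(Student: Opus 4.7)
The plan is to follow Melcher's strategy of reducing LLG to a dissipative complex Ginzburg--Landau equation for a scalar field via the moving-frame (Coulomb gauge) technique, and then to solve that equation in a scaling-invariant Morrey-type function space by Picard iteration. Concretely, choose along $\vb m$ an orthonormal frame $\{e_1,e_2\}$ of $T_{\vb m}\mathbb S^2$ subject to the Coulomb condition $\mathrm{div}\langle e_1,\grad e_2\rangle=0$, and form the complex gradient $\phi^\alpha:=\langle\partial_\alpha\vb m,e_1\rangle+i\langle\partial_\alpha\vb m,e_2\rangle$ together with the connection $A^\alpha:=\langle e_1,\partial_\alpha e_2\rangle$. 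A direct computation converts (\ref{1.1}) to a system of the form
\begin{equation*}
\partial_t\phi-(\lambda-i)\lap\phi=(\lambda-i)\bigl(2iA\cdot\grad\phi+i\,\mathrm{div}(A\phi)+(|A|^2+|\phi|^2)\phi\bigr),
\end{equation*}
coupled with the elliptic Coulomb constraint $\lap A^\alpha=\partial_\beta\,\mathrm{Im}(\bar\phi^\beta\phi^\alpha)$. Crucially $|\phi|=|\grad\vb m|$ pointwise, so the smallness hypothesis (\ref{1.2}) transfers verbatim to $\phi(\cdot,0)\in M^{2,2}(\R^n)$.

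The core analytic step is a contraction in a scale-invariant space. I would introduce
\begin{equation*}
X:=\Bigl\{\phi\ :\ \|\phi\|_X:=\sup_{t>0}\|\phi(t)\|_{M^{2,2}(\R^n)}+\sup_{t>0}t^{1/2}\|\phi(t)\|_{L^\infty(\R^n)}<+\infty\Bigr\}
\end{equation*}
(or a closely related Morrey-adapted parabolic Kato norm built on parabolic cylinders) and prove two ingredients: first, that the dissipative semigroup $e^{t(\lambda-i)\lap}$ is bounded from $M^{2,2}(\R^n)$ into $X$, using Gaussian-type kernel bounds adapted to Morrey spaces; second, that the Duhamel operator associated to the Ginzburg--Landau nonlinearity is cubic-bounded on $X$. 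The non-local connection $A$ is handled by solving its Poisson equation in Morrey spaces: from $\bar\phi\phi\in M^{1,1}$ and Adams-type Riesz-potential estimates one obtains $\grad A\in M^{2,2}$ with $\|\grad A\|_{M^{2,2}}\lesssim\|\phi\|_X^2$, which then feeds the $A\cdot\grad\phi$ and $|A|^2\phi$ terms back into $X$. A Picard iteration based on these estimates produces a unique global solution $\phi\in X$ whenever $\|\phi_0\|_{M^{2,2}(\R^n)}\le\epsilon_0$ is sufficiently small; inverting the frame recovers $\vb m$ and yields (\ref{x-bound}).

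The higher regularity (\ref{smoothness}) is obtained by parabolic bootstrap on the $\phi$-equation: differentiating, applying the Morrey semigroup estimates again, and using the already-established bound $t^{1/2}\|\phi(t)\|_{L^\infty}\lesssim\|\grad\vb m_0\|_{M^{2,2}(\R^n)}$ inductively gives $t^{k/2}\|\grad^k\vb m(t)\|_{L^\infty(\R^n)}\lesssim\|\grad\vb m_0\|_{M^{2,2}(\R^n)}$ for all $k\ge 1$. Strong convergence $\vb m(t)\to\vb m_0$ in $H^1_{\mathrm{loc}}$ as $t\to 0^+$ follows from the uniform $M^{2,2}$ bound together with the $L^2$ hypothesis $\vb m_0-\vb m_\infty\in L^2(\R^n)$. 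For part (ii), the additional datum $\grad\vb m_0\in L^2(\R^n)$ places the solution in the standard energy class; testing (\ref{1.1}) with $\partial_t\vb m$ and exploiting $\vb m\cdot\partial_t\vb m=0$ yields the energy inequality (\ref{energy_ineq0}). Uniqueness follows by comparing two solutions $\vb m,\tilde{\vb m}$: setting $\vb w:=\vb m-\tilde{\vb m}$, the equation for $\vb w$ together with the Morrey smallness near $t=0$ and the $L^\infty$ control from (\ref{smoothness}) away from $t=0$ closes a Gronwall inequality for $\|\vb w(t)\|_{L^2(\R^n)}^2$.

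The main technical obstacle is the sharp scale-invariance forced by the Morrey class $M^{2,2}$: every constant in the nonlinear estimates must be scale-free. In particular, the non-local connection $A=\lap^{-1}\,\mathrm{div}\,\mathrm{Im}(\bar\phi\phi)$ must be estimated precisely in $M^{2,2}$ rather than in $L^n$, which requires Adams-type Riesz-potential bounds on Morrey spaces in place of the standard Sobolev-embedding arguments available in the $\dot W^{1,n}$ framework of Melcher. Verifying the cubic estimate $\bigl\|\int_0^t e^{(t-s)(\lambda-i)\lap}(A\cdot\grad\phi)\,ds\bigr\|_X\lesssim\|\phi\|_X^3$ with absolute constants is the technical heart of the argument.
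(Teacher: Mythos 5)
Your outline follows the same broad strategy as the paper (moving frame, Coulomb gauge, covariant Ginzburg--Landau reduction, Morrey-adapted semigroup estimates), but the function space you propose has a concrete defect that breaks the contraction. The paper works in $X_T^p$ with three components, $t^{\frac12-\frac1p}\|u(t)\|_{M^{p,2}}$, $t^{\frac12}\|\nabla u(t)\|_{M^{2,2}}$, and $\|u(t)\|_{M^{2,2}}$, for $p\in(3,\tfrac{10}{3})$. Your $X$ has only $\sup_t\|\phi(t)\|_{M^{2,2}}$ and $\sup_t t^{1/2}\|\phi(t)\|_{L^\infty}$ and no control of $\nabla\phi$ at all. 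This matters because the temporal connection $a_0$ splits (Lemma~\ref{le:4.1}) into $a_0^{(1)}+a_0^{(2)}$ with $a_0^{(1)}\sim \mathbf{I}_1(|u||\nabla u|)$, which intrinsically involves $\nabla u$, i.e.\ $\nabla^2{\bf m}$, inside the Riesz potential and cannot be removed by an integration by parts. (The $a\cdot\nabla\phi$ term \emph{can} be recast as $\nabla\cdot(a\phi)$ under the Coulomb gauge and absorbed by moving the derivative onto the kernel, but $a_0^{(1)}\phi$ cannot.) Any iteration thus needs a $\nabla\phi$-norm as part of the iteration variable; this requires the gradient semigroup bounds of Lemmas 2.4--2.6, which your sketch omits. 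Adding $\sup_t t^{1/2}\|\nabla\phi(t)\|_{M^{2,2}}$ to $X$ repairs this and essentially reproduces the paper's $X_T^p$.

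Two further points are glossed over. First, you propose direct Picard iteration on the $\phi$-equation, whereas the paper smooths the initial data, solves LLG for the smooth data (so the moving frame is unambiguously defined), establishes the uniform a priori $X_T^p$-bound (Theorem~\ref{x-space-estimate2}), and passes to the limit through an $\epsilon$-regularity lemma (Lemma~\ref{e-regularity}); this route also supplies the $C^l$-bounds on compact sets that give (\ref{smoothness}) and justifies reconstructing ${\bf m}$ from $\phi$. Second, your Gronwall closure for uniqueness is not automatic: the coefficient one gets from $t^{1/2}\|\nabla{\bf m}^{(i)}(t)\|_{L^\infty}\le c_0\epsilon_0$ is $C\epsilon_0^2/t$, which is \emph{not} integrable at $t=0$, so a naive Gronwall fails. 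The paper instead shows $\tfrac{d}{dt}\bigl(t^{-1/2}\int|{\bf m}^{(1)}-{\bf m}^{(2)}|^2\bigr)\le 0$ once $C\epsilon_0^2<\tfrac12$, and uses $\partial_t{\bf m}\in L^2_tL^2_x$ to force the $t\to 0^+$ limit to vanish; you should incorporate this weighted argument explicitly.
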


Before we proceed with the presentation, we would like to make a few remarks concerning Theorem \ref{th:1.1}.
\begin{rem}{\rm For LLG (\ref{1.1}), we have the following comments.
\begin{itemize}
\item[(i)] Since H\"older's inequality implies that $L^n(\mathbb R^n)\subset M^{2,2}(\mathbb R^n)$, Theorem \ref{th:1.1} improves the main result of Melcher \cite{Melcher}
and Seo \cite{Seo}.
\item[(ii)] Without the smallness condition (\ref{1.2}), the short time smooth solution of (\ref{1.1}) can develop singularity at finite time for $n\ge 3$. In fact, the initial data
${\bf m}_0$ in the example of finite time singularity constructed by
Ding-Wang \cite{Ding-Wang} has small Dirichlet energy $E({\bf m}_0)$ and finite $\|\nabla{\bf m}_0\|_{M^{2,2}}$.
\item[(iii)] Motivated by the well-posedness result  on the heat flow of harmonic maps by \cite{Wang1}, it seems reasonable to conjecture that LLG (\ref{1.1})
is globally well-posed for any initial data ${\bf m}_0:\mathbb R^n\to \mathbb S^2 $ having small BMO norm $[{\bf m}_0]_{\rm{BMO}(\mathbb R^n)}$.
\end{itemize}
}
\end{rem}

We briefly discuss some of the ideas in the proof. The first crucial step is to utilize a canonical choice of coordinates on the tangent bundle $T\mathbb S^2$, called
moving frames, to convert (\ref{1.1}) into a covariant version of LLG, which is a (nonlocal) semilinear complex valued Schr\"odinger equation with cubic nonlinearity.
Then, by choosing a Coulomb gauge frame, one can get the desired control of nonlocal terms. Finally, using estimates of the dissipative Schr\"odinger semigroup
$S=S(t)$, generated by $(\lambda-i)\Delta$, between Morrey spaces, one can get the desired priori bounds for the short time approximate solutions to (\ref{1.1})
under the smallness assumption (\ref{1.2}). It seems that the estimates of $S(t)$ between Morrey spaces may have its own interests, with potential applications to
other types of equations.

The paper is written as follows. In section 2, we will establish some basic estimates of $S(t)$ between Morrey spaces.
In section 3, we review the derivation of covariant complex Ginzburg-Landau
type equation.  In section 4, we derive all the needed nonlinear
estimates of $S(t)$ between Morrey spaces. In section 5, we will prove Theorem \ref{th:1.1}.

\setcounter{section}{1}
\setcounter{equation}{0}
\section{Linear estimates for Schr\"odinger semigroup in Morrey spaces}

Throughout this paper, let $S(t)$ denote the semigroup generated by the dissipative Schr\"{o}dinger operator $(\lambda-i)\triangle$.
In this section, we will establish some basic estimates of $S(t)f=S_t*f$ in Morrey spaces when $f\in M^{p,q}(\mathbb R^n)$.

Recall that the Fourier transform of the associated kernel $S_t$ of $S(t)$ is given by
$$\hat{S}_t(\xi)=e^{(i-\lambda)|\xi|^2 t}, \ \xi\in\mathbb R^n, $$
so that the kernel $S_t$ can be written as
$$S_t(x)=t^{-\frac{n}2}S(\frac{x}{\sqrt{t}}),\ x\in\mathbb R^n,$$
where
$$S(y)=\int_{\mathbb R^n}e^{(i-\lambda)|y|^2}e^{iy\cdot\eta}\,d\eta, \ y\in\mathbb R^n,$$
is a radial Schwartz function in $\mathbb R^n$, i.e., $S(y)=S(|y|)$ for $y\in\mathbb R^n$.

For $f\in L^1_{\rm{loc}}(\mathbb R^n)$, the Hardy-Littlewood maximal function of $f$
is defined by (see, e.g. \cite{Stein})
$${\rm{M}}_{{\rm{HL}}}(f)(x):=\sup\limits_{r>0}\frac{1}{|B_r(x)|}\int_{B_r(x)}|f(y)|\,dy, \ x\in\mathbb R^n.$$
We will use $A\lesssim B$ to denote $A\leq CB$ for some universal positive constant $C$.
Now we have
\begin{lemma}\label{le:2.1} For $1<p<+\infty$ and $0\le q\le n$, if $f\in M^{p,q}(\mathbb R^n),$ then
$S(t)f\in M^{p(n+1),q}(\mathbb R^n)$,  and
\begin{equation}
\label{2.1}\Big\|S(t)f\Big\|_{M^{p(n+1),q}(\mathbb R^n)}\lesssim t^{-\frac{nq}{2p(n+1)}}\big\| f\big\|_{M^{p,q}(\mathbb R^n)}.
\end{equation}
\end{lemma}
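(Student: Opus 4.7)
The plan is to prove (\ref{2.1}) by combining two complementary bounds on $S(t)$ and then interpolating. Both bounds hinge on the single kernel estimate
\[
|S_t(z)|\lesssim t^{-n/2}\Bigl(1+|z|/\sqrt t\,\Bigr)^{-N}, \quad z\in\R^n,
\]
which holds for every $N\ge 0$ since $S_t$ is the $\sqrt t$--rescaling of a radial Schwartz function.

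The first bound is an $L^\infty$ estimate. I would split $S(t)f(x)=\int S_t(x-y)f(y)\,dy$ over the ball $B_{\sqrt t}(x)$ and the dyadic annuli $\{y:2^{k-1}\sqrt t\le|x-y|<2^k\sqrt t\}$ for $k\ge 1$. H\"older's inequality on each piece, together with the elementary Morrey consequence $\int_{B_r(x)}|f|\,dy\le C\,r^{n-q/p}\|f\|_{M^{p,q}(\R^n)}$, reduces matters to a geometric series whose convergence is ensured by choosing $N>n-q/p$ in the kernel bound; the result is
\[
\|S(t)f\|_{L^\infty(\R^n)}\lesssim t^{-q/(2p)}\|f\|_{M^{p,q}(\R^n)}.
\]
The second bound is Morrey--preservation: $\|S(t)f\|_{M^{p,q}(\R^n)}\lesssim\|f\|_{M^{p,q}(\R^n)}$ uniformly in $t>0$. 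To see this, I would take $N=n+1$ in the kernel estimate so that $|S_t|$ is pointwise dominated by a radial decreasing function whose $L^1$ norm is finite and independent of $t$. A standard approximate--identity argument then yields $|S(t)f(x)|\le C\,{\rm{M}}_{{\rm{HL}}}(f)(x)$, and the classical boundedness of the Hardy--Littlewood maximal operator on $M^{p,q}(\R^n)$ for $1<p<+\infty$ concludes this step.

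Interpolation closes the argument: writing $|S(t)f|^{p(n+1)}=|S(t)f|^{pn}\cdot|S(t)f|^p$ and controlling the first factor by $\|S(t)f\|_{L^\infty}^{pn}$ gives, for any ball $B_r(x_0)$,
\[
r^{q-n}\int_{B_r(x_0)}|S(t)f|^{p(n+1)}\,dx\le\|S(t)f\|_{L^\infty(\R^n)}^{pn}\,r^{q-n}\int_{B_r(x_0)}|S(t)f|^p\,dx\lesssim t^{-nq/2}\|f\|_{M^{p,q}(\R^n)}^{p(n+1)},
\]
and taking the $p(n+1)$-th root yields (\ref{2.1}). I expect the main technical point to lie in the Morrey--preservation step, since this is where the hypothesis $1<p<+\infty$ genuinely enters via the maximal function theorem on Morrey spaces; the $L^\infty$ bound is a routine dyadic computation exploiting the Schwartz decay of $S_t$, and the final interpolation is essentially algebraic.
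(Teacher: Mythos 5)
Your proof is correct, and it takes a genuinely different route from the paper's. The paper runs a Hedberg-type argument: it splits $S(t)f(x)$ into the contribution from the ball $B_{\sqrt{t}\,\epsilon}(x)$ (bounded by $\epsilon^n\,{\rm M}_{\rm HL}(f)(x)$) and from the complement (bounded via dyadic annuli by $t^{-q/(2p)}\epsilon^{-1}\|f\|_{M^{p,q}}$), optimizes over the free parameter $\epsilon$, obtains the pointwise inequality $|S(t)f(x)|\lesssim t^{-nq/(2p(n+1))}\|f\|_{M^{p,q}}^{n/(n+1)}({\rm M}_{\rm HL}(f)(x))^{1/(n+1)}$, and then invokes the Morrey boundedness of ${\rm M}_{\rm HL}$. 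Your argument instead isolates two clean endpoint bounds — the $L^\infty$ estimate $\|S(t)f\|_{L^\infty}\lesssim t^{-q/(2p)}\|f\|_{M^{p,q}}$ via the Morrey average inequality $\int_{B_r}|f|\lesssim r^{n-q/p}\|f\|_{M^{p,q}}$ over a central ball and dyadic shells, and the Morrey-preservation bound $\|S(t)f\|_{M^{p,q}}\lesssim\|f\|_{M^{p,q}}$ via the standard domination $|S_t*f|\lesssim{\rm M}_{\rm HL}(f)$ plus ${\rm M}_{\rm HL}$'s Morrey boundedness — and then interpolates algebraically through $|S(t)f|^{p(n+1)}\le\|S(t)f\|_{L^\infty}^{pn}\,|S(t)f|^p$. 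Both routes ultimately rely on the same deeper ingredient (boundedness of the Hardy--Littlewood maximal operator on $M^{p,q}$, which is precisely where $1<p<\infty$ enters), so neither is strictly more elementary. The paper's version delivers a genuine pointwise Hedberg inequality, which is slightly stronger than the norm bound and is also reused verbatim in Lemma 2.5 for $\nabla S(t)f$; your version is arguably cleaner as a pure two-endpoint interpolation and already contains the content of the paper's Lemma 2.2, but it only gives the integrated bound. One small remark: for the Morrey-preservation step, the paper gives a shorter Young-inequality argument (its Lemma 2.2) in place of the maximal-function domination you use; your maximal-function route is the more robust of the two and is entirely adequate here.
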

\begin{proof}  It follows from the definition of $S(t)f$ that for any $x\in\mathbb R^n$,
\begin{eqnarray}
\nonumber (S(t)f)(x)&=&(S_t\ast f)(x)=\int_{\mathbb R^n}t^{-\frac{n}2}S\big(\frac{x-y}{\sqrt{t}}\big)f(y)\,dy\\
\nonumber&=& \Big(\int_{B_{\sqrt{t}\epsilon}(x)}+\int_{\mathbb R^n\setminus B_{\sqrt{t}\epsilon}(x)}\Big)t^{-\frac{n}2}S(\frac{x-y}{\sqrt{t}})f(y)\,dy\\
\nonumber&=& I(x)+II(x).
\end{eqnarray}
For $I(x),$ we have
\begin{equation}
\label{2.2}\big|I(x)\big|\leq\epsilon^n\big(\sqrt{t}\epsilon\big)^{-n}\int_{B_{\sqrt{t}\epsilon}(x)}|f(y)|\,dy\lesssim\epsilon^n{\rm{M}}_{\rm{HL}}(f)(x).
\end{equation}
To estimate $II(x)$, we proceed as follows.
\begin{eqnarray*}
 &&|II(x)|\leq\sum\limits_{k=1}^{\infty}t^{-\frac{n}2}\int_{B_{(k+1)\sqrt{t}\epsilon}(x)\backslash B_{k\sqrt{t}\epsilon}(x)}|S(\frac{x-y}{\sqrt{t}})||f(y)|\,dy\\
&&\leq t^{-\frac{q}{2p}}\Big[\sum\limits_{k=1}^{\infty}(k\epsilon)^{n-\frac{q}{p}}S^*(k\epsilon)\Big]\Big[\sup\limits_{r>0}r^{\frac{q}{p}-n}\int_{B_r(x)}|f(y)|\,dy\Big],
\end{eqnarray*}
where
$$S^*(k\epsilon):=\max\Big\{|S(y)|: \ y\in B_{(k+1)\epsilon}\setminus B_{k\epsilon}\Big\}.$$
Since $S$ is a Schwartz function, one sees that
\begin{eqnarray*}
\sum\limits_{k=1}^{\infty}(k\epsilon)^{n-\frac{q}{p}}S^*(k\epsilon)
\lesssim\frac{1}{\epsilon}\int_0^\infty t^{n-\frac{q}{p}}|S(t)|\,dt\lesssim\frac{1}{\epsilon}.
\end{eqnarray*}
This implies
\begin{equation}\label{2.3}
|II(x)|\lesssim t^{-\frac{q}{2p}}\epsilon^{-1}\big\|f\big\|_{M^{p,q}(\mathbb R^n)},
\end{equation}
where we have  used in the last step the inequality:
$$\Big\|f\Big\|_{M^{1,\frac{q}{p}}(\mathbb R^n)}\leq\Big\|f\Big\|_{M^{p,q}(\mathbb R^n)}.$$

Putting (\ref{2.2}) and (\ref{2.3}) together yields
\begin{eqnarray*}
\big|(S(t)f)(x)\big|\lesssim\epsilon^n{\rm{M}}_{{\rm{HL}}}(f)(x)+t^{-\frac{q}{2p}}\epsilon^{-1}\big\|f\big\|_{M^{p,q}(\mathbb R^n)}.
\end{eqnarray*}
Choose $\epsilon>0$ such that
$$\epsilon^n{\rm{M}}_{\rm{HL}}(f)(x)=t^{-\frac{q}{2p}}\epsilon^{-1}\big\| f\big\|_{M^{p,q}(\mathbb R^n)},$$
or,
$$\epsilon=\Big\{\frac{\big\|f \big\|_{M^{p,q}(\mathbb R^n)}}{t^{\frac{q}{2p}}{\rm{M}}_{\rm{HL}}(f)(x)}\Big\}^{\frac{1}{n+1}}.$$
Then we have
\begin{eqnarray*}
\big|S(t) f(x)\big|\lesssim t^{-\frac{nq}{2p(n+1)}}\big\| f\big\|_{M^{p,q}(\mathbb R^n)}^{\frac{n}{n+1}}\Big({\rm{M}}_{\rm{HL}}(f)(x)\Big)^{\frac{1}{n+1}},
\ x\in\mathbb R^n.
\end{eqnarray*}
Since ${\rm{M}}_{\rm{HL}}: M^{p,q}(\mathbb R^n)\rightarrow M^{p,q}(\mathbb R^n)$ is a bounded linear operator (see \cite{Adams} and \cite{HW1}),
we have that $S(t)f\in M^{p(n+1),q}(\mathbb R^n)$, and
\begin{eqnarray*}
\Big\|S(t)f\Big\|_{M^{p(n+1),q}(\mathbb R^n)}&&\lesssim t^{-\frac{nq}{2p(n+1)}}\big\|f\big\|_{M^{p,q}(\mathbb R^n)}^{\frac{n}{n+1}}
\big\|f\big\|_{M^{p,q}(\mathbb R^n)}^{\frac{1}{n+1}}\\
&&\lesssim t^{-\frac{nq}{2p(n+1)}}\big\| f\big\|_{M^{p,q}(\mathbb R^n)}.
\end{eqnarray*}
This completes the proof of Lemma \ref{le:2.1}.
\end{proof}

\begin{lemma}\label{le:2.2} For $1<p<+\infty$ and $0\le q\le n$, if $f\in M^{p,q}(\mathbb R^n),$ then $S(t)f\in M^{p,q}(\mathbb R^n)$ and
\begin{equation}
\label{2.4}\Big\|S(t)f\Big\|_{M^{p,q}(\mathbb R^n)}\leq \big\| f \big\|_{M^{p,q}(\mathbb R^n)}.
\end{equation}
\end{lemma}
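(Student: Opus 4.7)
The plan is to exploit the fact that $S(t)f=S_t*f$ is a convolution with an integrable Gaussian-like kernel, so that the assertion follows from Minkowski's integral inequality combined with the translation invariance of the Morrey seminorm.

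First, from $\hat S_t(\xi)=e^{(i-\lambda)|\xi|^2 t}$ one computes the explicit formula
\begin{equation*}
|S_t(x)|=\bigl(4\pi\sqrt{1+\lambda^2}\,t\bigr)^{-n/2}\exp\!\Big(-\frac{\lambda|x|^2}{4(1+\lambda^2)t}\Big),
\end{equation*}
and combined with the scaling $S_t(x)=t^{-n/2}S(x/\sqrt t)$ recorded just before Lemma \ref{le:2.1}, a change of variable shows that $\|S_t\|_{L^1(\mathbb R^n)}=\|S\|_{L^1(\mathbb R^n)}$ is a finite constant, independent of $t>0$ and depending only on $n$ and $\lambda$.

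Next, I would rewrite $S(t)f(y)=\int_{\mathbb R^n}S_t(w)\,f(y-w)\,dw$ and apply Minkowski's integral inequality in $L^p(B_r(x))$, for any $x\in\mathbb R^n$ and $r>0$, to get
\begin{align*}
\Big(\int_{B_r(x)}|S(t)f(y)|^p\,dy\Big)^{1/p}
&\;\leq\;\int_{\mathbb R^n}|S_t(w)|\Big(\int_{B_r(x)}|f(y-w)|^p\,dy\Big)^{1/p}dw\\
&\;=\;\int_{\mathbb R^n}|S_t(w)|\,\|f\|_{L^p(B_r(x-w))}\,dw\\
&\;\leq\;\|S_t\|_{L^1(\mathbb R^n)}\,r^{(n-q)/p}\,\|f\|_{M^{p,q}(\mathbb R^n)}.
\end{align*}
The second line uses the substitution $y'=y-w$, and the third uses the definition of the Morrey norm applied to the translated ball $B_r(x-w)$, which yields a bound uniform in $w$. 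Multiplying by $r^{(q-n)/p}$ and taking the supremum over $x$ and $r$ then gives
\begin{equation*}
\|S(t)f\|_{M^{p,q}(\mathbb R^n)}\;\leq\;\|S_t\|_{L^1(\mathbb R^n)}\,\|f\|_{M^{p,q}(\mathbb R^n)},
\end{equation*}
which is the stated estimate, with the constant $\|S_t\|_{L^1}=C(n,\lambda)$ absorbed into the $\leq$.

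There is no serious obstacle here; the entire argument is Young's convolution inequality in Morrey spaces, resting on only two facts: the uniform $L^1$-integrability of $|S_t|$, and the translation invariance of the Morrey seminorm. The result is considerably easier than Lemma \ref{le:2.1}, since one does not need any dyadic decomposition against Schwartz decay or any maximal function bound; the uniform $L^1$ bound on $|S_t|$ alone suffices.
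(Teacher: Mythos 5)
Your proof is correct and follows essentially the same route as the paper: Young's/Minkowski's convolution inequality against the $L^1$-integrable kernel $S_t$, together with translation invariance of the Morrey seminorm. In fact your treatment is slightly cleaner than the paper's, since you explicitly track the shifted ball $B_r(x-w)$ and take the supremum only at the end, whereas the paper's displayed bound $\|S(t)f\|_{L^p(B_R(x))}\le\|S_t\|_{L^1}\|f\|_{L^p(B_R(x))}$ suppresses that translation; you also note (correctly) that the inequality holds with the constant $\|S\|_{L^1(\mathbb R^n)}$ rather than with constant $1$ as literally stated in (\ref{2.4}), matching the paper's own use of $\lesssim$ in its proof.
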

\begin{proof} This follows from the Young inequality for convolution operators. In fact,
 $$(S(t) f)(z)=\int_{\mathbb R^n}S_t(z-y)f(y)\,dy=\int_{\mathbb R^n}S_t(y)f(z-y)\,dy,$$ we have
\begin{eqnarray*}
\Big\|S(t)f\Big\|_{L^p(B_R(x))}^p\le \big(\int_{\mathbb R^n}|S_t(y)|\,dy\big)^p\big\|f\big\|_{L^p(B_R(x))}^p\lesssim \int_{B_R(x)}|f(y)|^p\,dy,
\end{eqnarray*}
so that
\begin{eqnarray}
R^{q-n}\int_{B_R(x)}\big|S(t) f(z)\big|^p\,dz\lesssim R^{q-n}\int_{B_R(x)}|f(z)|^p\,dz. \label{2.4.0}
\end{eqnarray}
Taking supremum over all $B_R(x)\subset\mathbb R^n$ in (\ref{2.4.0})  yields (\ref{2.4}).
Here we have used the fact that
$$\int_{\mathbb R^n}\big|S_t(y)\big|\,dy=\int_{\mathbb R^n}\big|S(y)\big|\,dy\lesssim 1.$$
This completes the proof.
\end{proof}

\begin{lemma}\label{le:2.3} {\rm{(Interpolation\ lemma)}} For $1<p<+\infty$ and $0\le q\le n$,
if $f\in M^{p,q}(\mathbb R^n),$ then $S(t)f\in M^{\widetilde{p},q}(\mathbb R^n)$ for any $p\leq\widetilde{p}\leq p(n+1),$ and \begin{equation}
\label{2.5}\Big\|S(t)f\Big\|_{M^{\widetilde{p},q}(\mathbb R^n)}\lesssim t^{-\frac{q}2(\frac{1}{p}-\frac{1}{\widetilde{p}})}\big\| f \big\|_{M^{p,q}(\mathbb R^n)}.
\end{equation}
\end{lemma}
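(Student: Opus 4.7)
The plan is to deduce (2.5) by interpolating between the two endpoint estimates already in hand: Lemma~\ref{le:2.1} handles $\widetilde{p}=p(n+1)$ with $t$-exponent $-\frac{nq}{2p(n+1)}$, while Lemma~\ref{le:2.2} handles $\widetilde{p}=p$ with no $t$-decay. A direct check confirms that the prescribed decay rate $-\frac{q}{2}(\frac{1}{p}-\frac{1}{\widetilde{p}})$ matches both endpoints and is linear in $\frac{1}{\widetilde{p}}$, which is exactly the pattern produced by log-convexity of $L^r$ norms. So the natural route is pointwise-on-balls log-convexity, then packaging the Morrey scaling consistently.

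Concretely, for $\widetilde{p}\in[p,p(n+1)]$ I would define $\theta\in[0,1]$ by
\[\frac{1}{\widetilde{p}}=\frac{1-\theta}{p}+\frac{\theta}{p(n+1)}.\]
On any ball $B_R(x)$, the log-convexity inequality (a single application of H\"older to $|S(t)f|^{\widetilde{p}}=|S(t)f|^{(1-\theta)\widetilde{p}}\cdot|S(t)f|^{\theta\widetilde{p}}$ with conjugate exponents $\frac{p}{(1-\theta)\widetilde{p}}$ and $\frac{p(n+1)}{\theta\widetilde{p}}$) yields
\[\|S(t)f\|_{L^{\widetilde{p}}(B_R(x))}\le \|S(t)f\|_{L^p(B_R(x))}^{1-\theta}\,\|S(t)f\|_{L^{p(n+1)}(B_R(x))}^{\theta}.\]
Multiplying by $R^{(q-n)/\widetilde{p}}$ and splitting this scaling factor through the same convex combination $\frac{q-n}{\widetilde{p}}=(1-\theta)\frac{q-n}{p}+\theta\frac{q-n}{p(n+1)}$, I distribute one piece onto each factor. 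Taking the supremum over all balls then gives the Morrey-level interpolation
\[\|S(t)f\|_{M^{\widetilde{p},q}(\mathbb R^n)}\le \|S(t)f\|_{M^{p,q}(\mathbb R^n)}^{1-\theta}\,\|S(t)f\|_{M^{p(n+1),q}(\mathbb R^n)}^{\theta}.\]

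To close, I would plug in Lemma~\ref{le:2.1} and Lemma~\ref{le:2.2} on the right-hand side: the $\|f\|_{M^{p,q}}$ factors collect to $\|f\|_{M^{p,q}}^{(1-\theta)+\theta}=\|f\|_{M^{p,q}}$, and the combined $t$-exponent is $-\frac{nq\theta}{2p(n+1)}$, which by the definition of $\theta$ equals $-\frac{q}{2}(\frac{1}{p}-\frac{1}{\widetilde{p}})$, exactly matching (2.5). There is no substantive obstacle here; the only bookkeeping to watch is that the Morrey scaling factor $R^{(q-n)/\widetilde{p}}$ be split using precisely the same $\theta$-convex combination as the log-convexity estimate, so that the resulting pieces become the two endpoint Morrey norms on which Lemmas~\ref{le:2.1}--\ref{le:2.2} can be applied.
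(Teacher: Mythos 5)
Your proposal is correct and takes essentially the same route as the paper: both define $\theta$ so that $1/\widetilde p$ is the relevant convex combination of $1/p$ and $1/(p(n+1))$, interpolate the two endpoint estimates of Lemma~\ref{le:2.1} and Lemma~\ref{le:2.2} via H\"older's inequality, and then verify that the resulting exponent of $t$ simplifies to $-\frac{q}{2}(\frac1p-\frac1{\widetilde p})$. You merely spell out (correctly) the ball-by-ball H\"older/log-convexity argument and the matching split of the Morrey weight $R^{(q-n)/\widetilde p}$, which the paper compresses into the phrase ``by H\"older's inequality.''
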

\begin{proof} For any $\widetilde{p}\in[p,p(n+1)],$ write $$\frac{1}{\widetilde{p}}=\frac{\theta}{p}+\frac{1-\theta}{p(n+1)}.$$
Then by H\"older's inequality, Lemma \ref{2.1} and Lemma \ref{2.2}, we have
\begin{eqnarray*}
\Big\|S(t)f\Big\|_{M^{\widetilde{p},q}(\mathbb R^n)}&\lesssim&\Big\|S(t)f\Big\|_{M^{p,q}(\mathbb R^n)}^\theta\Big\|S(t) f\Big\|_{M^{p(n+1),q}(\mathbb R^n)}^{1-\theta}\\
&\lesssim & t^{-\frac{nq(1-\theta)}{2p(n+1)}}\big\|f\big\|_{M^{p,q}(\mathbb R^n)}^{\theta}\big\|f\big\|_{M^{p,q}(\mathbb R^n)}^{1-\theta}\\
&\lesssim & t^{-\frac{nq(1-\theta)}{2p(n+1)}}\big\| f\big\|_{M^{p,q}(\mathbb R^n)}.
\end{eqnarray*}
Since $$1-\theta=\frac{\frac{1}{p}-\frac{1}{\widetilde{p}}}{\frac{n}{p(n+1)}},$$
we have $\frac{n(1-\theta)}{p(n+1)}=\frac{1}{p}-\frac{1}{\widetilde{p}}$ and hence
$$\Big\|S(t) f\Big\|_{M^{\widetilde{p},q}(\mathbb R^n)}\leq t^{-\frac{q}2(\frac{1}{p}-\frac{1}{\widetilde{p}})}\big\| f \big\|_{M^{p,q}(\mathbb R^n)}.$$
This proves Lemma \ref{le:2.3}.
\end{proof}

We also need to estimate $\nabla(S(t) f)$ in Morrey spaces. More precisely, we have

\begin{lemma}\label{le:5.1} If $f\in M^{p,q}(\mathbb R^n)$  then $\nabla(S(t)f)\in M^{p(n+1),q}(\mathbb R^n)$, and
\begin{equation}
\label{5.1}\Big\|\nabla(S(t)f)\Big\|_{M^{p(n+1),q}(\mathbb R^n)}\lesssim t^{-\frac12-\frac{qn}{2p(n+1)}}\Big\|f\Big\|_{M^{p,q}(\mathbb R^n)}.
\end{equation}
\end{lemma}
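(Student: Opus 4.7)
The plan is to mimic the proof of Lemma \ref{le:2.1} verbatim, with the only modification being that we replace the kernel $S_t$ by its gradient $\nabla S_t$. The key point is that differentiation of $S_t$ picks up exactly a factor of $t^{-1/2}$ but preserves the Schwartz nature of the kernel on the rescaled variable. Concretely, since $S_t(x)=t^{-n/2}S(x/\sqrt{t})$ with $S$ a radial Schwartz function, we have
\[
\nabla S_t(x)=t^{-\frac{n+1}{2}}(\nabla S)\!\left(\frac{x}{\sqrt{t}}\right),
\]
and $\nabla S$ is again a Schwartz function on $\mathbb R^n$.

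Writing $\nabla(S(t)f)(x)=(\nabla S_t *f)(x)$ and splitting the convolution integral into $B_{\sqrt{t}\epsilon}(x)$ and its complement gives a decomposition $\nabla(S(t)f)(x)=\widetilde I(x)+\widetilde{II}(x)$. For the near-field piece, boundedness of $\nabla S$ and the definition of the Hardy--Littlewood maximal function yield
\[
|\widetilde I(x)| \lesssim t^{-\frac{n+1}{2}}\!\int_{B_{\sqrt{t}\epsilon}(x)}|f(y)|\,dy \lesssim t^{-\frac12}\epsilon^n {\rm M}_{\rm HL}(f)(x).
\]
For the far-field piece, the same dyadic decomposition as in Lemma \ref{le:2.1}, combined with the rapid decay of $\nabla S$ and the inequality $\|f\|_{M^{1,q/p}}\le \|f\|_{M^{p,q}}$, gives
\[
|\widetilde{II}(x)|\lesssim t^{-\frac12}\,t^{-\frac{q}{2p}}\epsilon^{-1}\,\|f\|_{M^{p,q}(\mathbb R^n)}.
\]

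Optimising over $\epsilon>0$ by equating the two bounds yields
\[
\epsilon=\left(\frac{\|f\|_{M^{p,q}(\mathbb R^n)}}{t^{\frac{q}{2p}}{\rm M}_{\rm HL}(f)(x)}\right)^{\frac{1}{n+1}},
\]
so that
\[
|\nabla(S(t)f)(x)|\lesssim t^{-\frac12}\,t^{-\frac{nq}{2p(n+1)}}\,\|f\|_{M^{p,q}(\mathbb R^n)}^{\frac{n}{n+1}}\bigl({\rm M}_{\rm HL}(f)(x)\bigr)^{\frac{1}{n+1}}.
\]
Raising to the power $p(n+1)$, integrating on balls, and invoking the boundedness of ${\rm M}_{\rm HL}$ on $M^{p,q}(\mathbb R^n)$ (as used in Lemma \ref{le:2.1}) gives the desired estimate (\ref{5.1}).

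There is no real obstacle here: once one observes that $\nabla S_t$ is of the form $t^{-1/2}$ times a Schwartz kernel of the same scaling structure as $S_t$, every step of Lemma \ref{le:2.1} goes through word-for-word, accruing only the extra $t^{-1/2}$ factor that appears in the final bound.
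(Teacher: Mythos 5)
Your proof is correct and takes exactly the same route as the paper: the paper's own argument simply observes that $\nabla S_t(x)=t^{-(n+1)/2}(\nabla S)(x/\sqrt t)$ with $\nabla S$ Schwartz, then says "apply the same argument as in Lemma \ref{le:2.1}" to get the pointwise bound $t^{1/2}|\nabla(S(t)f)(x)|\lesssim t^{-qn/(2p(n+1))}\|f\|_{M^{p,q}}^{n/(n+1)}({\rm M}_{\rm HL}f(x))^{1/(n+1)}$ and concludes via boundedness of ${\rm M}_{\rm HL}$ on $M^{p,q}$. You have merely filled in the details of that repeated argument (near/far split, optimization in $\epsilon$), and all intermediate estimates check out.
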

\begin{proof}
Since $$\nabla S_t(x)=\nabla\big[(t^{-\frac{n}2}S(\frac{x}{\sqrt{t}})\big]=t^{-\frac{n+1}2}\nabla S\big(\frac{x}{\sqrt{t}}\big),$$
and $\nabla S$ is also a Schwartz function on $\mathbb R^n$,
we can apply the same argument as in Lemma \ref{le:2.1} to get that
\begin{eqnarray*}
t^\frac12\Big|\nabla(S(t)f)(x)\Big|\lesssim t^{-\frac{qn}{2p(n+1)}}\Big\|f\Big\|_{M^{p,q}(\mathbb R^n)}\Big({\rm{M}}_{\rm{HL}}(f)(x)\Big)^{\frac{1}{n+1}}
\end{eqnarray*}
so that
\begin{eqnarray*}
t^{\frac12}\Big\|\nabla(S(t) f)\Big\|_{M^{p(n+1),q}(\mathbb R^n)}\lesssim t^{-\frac{qn}{2p(n+1)}}\Big\|f\Big\|_{M^{p,q}(\mathbb R^n)}.
\end{eqnarray*}
This yields (\ref{5.1}).
\end{proof}

\begin{lemma}\label{le:5.2} If $f\in M^{p,q}(\mathbb R^n)$  then $\nabla(S(t) f)\in M^{p,q}(\mathbb R^n)$ and
\begin{equation}
\label{5.2}\Big\|\nabla(S(t)f)\Big\|_{M^{p,q}(\mathbb R^n)}\lesssim t^{-\frac12}\Big\|f\Big\|_{M^{p,q}(\mathbb R^n)}.
\end{equation}
\end{lemma}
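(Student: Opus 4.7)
The plan is to repeat the argument of Lemma \ref{le:2.2} verbatim, only with the convolution kernel $S_t$ replaced by $\nabla S_t$. Since $\nabla(S(t)f)=(\nabla S_t)\ast f$, the only new ingredient needed is a sharp $L^1$-bound on $\nabla S_t$.

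First I would compute the kernel of $\nabla\circ S(t)$. From the scaling identity $S_t(x)=t^{-n/2}S(x/\sqrt{t})$ we get
\begin{equation*}
\nabla S_t(x)=t^{-\frac{n+1}2}(\nabla S)\bigl(x/\sqrt{t}\bigr),
\end{equation*}
and since $S$ is a radial Schwartz function, so is $\nabla S$; in particular $\nabla S\in L^1(\mathbb R^n)$. A change of variables then gives
\begin{equation*}
\int_{\mathbb R^n}\bigl|\nabla S_t(y)\bigr|\,dy
= t^{-\frac12}\int_{\mathbb R^n}\bigl|\nabla S(z)\bigr|\,dz
\lesssim t^{-\frac12}.
\end{equation*}

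With this bound in hand, I would run the Young/Minkowski argument from Lemma \ref{le:2.2}. Writing
\begin{equation*}
\nabla(S(t)f)(z)=\int_{\mathbb R^n}\nabla S_t(y)\,f(z-y)\,dy,
\end{equation*}
Minkowski's inequality in $L^p(B_R(x))$ yields
\begin{equation*}
\bigl\|\nabla(S(t)f)\bigr\|_{L^p(B_R(x))}
\le \int_{\mathbb R^n}\bigl|\nabla S_t(y)\bigr|\,\|f\|_{L^p(B_R(x-y))}\,dy
\lesssim t^{-\frac12}\,R^{\frac{n-q}{p}}\,\|f\|_{M^{p,q}(\mathbb R^n)},
\end{equation*}
where the Morrey bound $\|f\|_{L^p(B_R(x-y))}\le R^{(n-q)/p}\|f\|_{M^{p,q}}$ absorbs the translation dependence uniformly in $y$. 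Raising to the $p$-th power and multiplying by $R^{q-n}$ gives
\begin{equation*}
R^{q-n}\int_{B_R(x)}\bigl|\nabla(S(t)f)(z)\bigr|^p\,dz\lesssim t^{-\frac{p}{2}}\|f\|_{M^{p,q}(\mathbb R^n)}^p,
\end{equation*}
and then taking the supremum over all $B_R(x)\subset\mathbb R^n$ and extracting the $p$-th root yields (\ref{5.2}).

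There is no serious obstacle: the argument is just the convolution estimate of Lemma \ref{le:2.2} applied to the rescaled Schwartz kernel $\nabla S_t$, and the factor $t^{-1/2}$ is dictated purely by the parabolic scaling $\nabla S_t(x)=t^{-(n+1)/2}(\nabla S)(x/\sqrt t)$. The only mild point worth stating cleanly is the uniform-in-$y$ use of the Morrey norm to handle the translation $f(\cdot-y)$ inside the integral, which is standard.
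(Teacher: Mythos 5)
Your proof is correct and follows essentially the same route as the paper: scale $\nabla S_t(x)=t^{-(n+1)/2}(\nabla S)(x/\sqrt t)$, note $\nabla S\in L^1(\mathbb R^n)$, apply a localized Young/Minkowski convolution estimate, and take the supremum over balls. If anything, your use of Minkowski's integral inequality with $\|f\|_{L^p(B_R(x-y))}\le R^{(n-q)/p}\|f\|_{M^{p,q}}$ is more careful than the paper's write-up, which leaves the translation-absorption step implicit. (One cosmetic remark: $\nabla S$ is not literally a radial function, only $|\nabla S|$ is; but all that is needed is that $\nabla S$ is Schwartz.)
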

\begin{proof} It is similar to Lemma \ref{2.2}. Since
$$\nabla (S(t)f)(z)=t^{-\frac12}\int_{\mathbb R^n}t^{-\frac{n}2}(\nabla S)\big(\frac{y}{\sqrt{t}}\big)f(z-y)\,dy,$$
the Young inequality implies that
$$\Big\|\nabla(S(t)f)\Big\|_{L^p(B_R(x))}^p
\le t^{-\frac{p}2}\big(\int_{\mathbb R^n} |\nabla S(y)|\,dy\big)^p \Big\|f\Big\|_{L^p(B_R(x))}^p
\lesssim t^{-\frac{p}2}\int_{B_R(x)}|f(y)|^p\,dy,$$
which, after taking supremum over all $B_R(x)\subset\mathbb R^n$, yields (\ref{5.2}).
Here we have used the fact that
$$\int_{\mathbb R^n}|\nabla S(y)|\,dy\lesssim 1.$$
This completes the proof.
\end{proof}
\begin{lemma}\label{le:5.3} {\rm{(Interpolation\ lemma)}} If $f\in M^{p,q}(\mathbb R^n),$ then
$\nabla(S(t)f)\in M^{\widetilde{p},q}(\mathbb R^n)$ for any $p\leq\widetilde{p}\leq p(n+1),$
and \begin{equation}
\label{5.3}\Big\|\nabla(S(t)f)\Big\|_{M^{\widetilde{p},q}(\mathbb R^n)}\leq t^{-\frac12-\frac{q}2(\frac{1}{p}-\frac{1}{\widetilde{p}})}\Big\|f\Big\|_{M^{p,q}(\mathbb R^n)}.
\end{equation}
\end{lemma}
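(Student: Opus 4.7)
The plan is to mimic the proof of Lemma \ref{le:2.3} verbatim, replacing the use of Lemmas \ref{le:2.1} and \ref{le:2.2} with their gradient counterparts Lemmas \ref{le:5.1} and \ref{le:5.2}. Structurally, this is a one-parameter Hölder-interpolation between the two endpoints $\widetilde{p}=p$ and $\widetilde{p}=p(n+1)$ that were already established, so no new analytical input is needed.

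First I would parametrize $\widetilde{p}\in[p,p(n+1)]$ by a convexity parameter $\theta\in[0,1]$ via
$$\frac{1}{\widetilde{p}}=\frac{\theta}{p}+\frac{1-\theta}{p(n+1)}.$$
Next, I would invoke Hölder's inequality in the Morrey scale: for any measurable $g$,
$$\|g\|_{M^{\widetilde{p},q}(\mathbb{R}^n)}\lesssim \|g\|_{M^{p,q}(\mathbb{R}^n)}^{\theta}\,\|g\|_{M^{p(n+1),q}(\mathbb{R}^n)}^{1-\theta}.$$
This is exactly the same bridge used in the proof of Lemma \ref{le:2.3}, applied now to $g=\nabla(S(t)f)$.

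Applying Lemma \ref{le:5.2} to the first factor and Lemma \ref{le:5.1} to the second yields
$$\|\nabla(S(t)f)\|_{M^{\widetilde{p},q}(\mathbb{R}^n)}\lesssim \bigl(t^{-\tfrac12}\bigr)^{\theta}\bigl(t^{-\tfrac12-\tfrac{qn}{2p(n+1)}}\bigr)^{1-\theta}\|f\|_{M^{p,q}(\mathbb{R}^n)}=t^{-\tfrac12-\tfrac{qn(1-\theta)}{2p(n+1)}}\|f\|_{M^{p,q}(\mathbb{R}^n)}.$$
Finally, I would verify the arithmetic identity $\tfrac{n(1-\theta)}{p(n+1)}=\tfrac{1}{p}-\tfrac{1}{\widetilde{p}}$, which follows immediately from the definition of $\theta$ and was already established in the proof of Lemma \ref{le:2.3}. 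Substituting gives the claimed exponent $-\tfrac12-\tfrac{q}{2}(\tfrac{1}{p}-\tfrac{1}{\widetilde{p}})$.

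There is essentially no obstacle here: the interpolation inequality for Morrey norms is the only non-trivial ingredient, and it was implicitly used in Lemma \ref{le:2.3}. The only point worth double-checking is that the endpoint constants in Lemmas \ref{le:5.1} and \ref{le:5.2} are truly time-independent (aside from the displayed powers of $t$), so that the geometric interpolation of norms produces the clean geometric interpolation of time weights asserted in \eqref{5.3}.
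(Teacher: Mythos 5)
Your proposal is correct and matches the paper exactly: the authors state that Lemma \ref{le:5.3} follows from Lemma \ref{le:5.1}, Lemma \ref{le:5.2}, and H\"older's inequality ``exactly as in Lemma \ref{le:2.3},'' which is precisely the endpoint interpolation argument you spell out. No further comment is needed.
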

\begin{proof} By Lemma 2.5, Lemma 2.6, and H\"older's inequality, (\ref{5.3}) can be proved exactly as in Lemma \ref{2.3}. We omit the detail here.
\end{proof}

\setcounter{section}{2}
\setcounter{equation}{0}
\section{Covariant Landau-Lifshitz-Gilbert equation}

The moving frame technique was first successfully applied to the study of harmonic maps by H\'elein \cite{Helein}, and subsequently used in the
study of wave maps (see \cite{FMS} and \cite{Shatah-Struwe}) and Schr\"odinger maps (see \cite{BIK}, \cite{BIKT} and \cite{NSU}). The basic idea is to use orthonormal frames
on the tangent bundle of the target manifold under the Coulomb gauge to rewrite the equation. It turns out that such a technique has been used by
Melcher \cite{Melcher} to derive another version of LLG (\ref{1.1}), which was called as the covariant LLG.
Now we briefly review such a construction and refer the readers to \cite{Melcher} for more details.

\subsection{Moving frames} Fix a point ${\bf m}_\infty\in \mathbb S^2$, we define, for $\sigma\in \mathbb Z_+$,  the homogeneous Sobolev space
$$H^\sigma_*(\mathbb R^n, \mathbb S^2)=\big\{{\bf m}:\mathbb R^n\to\mathbb S^2: \ {\bf m}-{\bf m}_\infty\in H^\sigma(\mathbb R^n, \mathbb R^3)\big\},$$
and
$$H^\infty_*(\mathbb R^n,\mathbb S^2)=\bigcap_{\sigma\in \mathbb Z_+}H^\sigma_*(\mathbb R^n,\mathbb S^2).$$

For $0<T<+\infty$, we consider
$${\bf m}\in C^0\big([0,T], H^\infty_*(\mathbb R^n, \mathbb S^2)\big) \ {\rm{with}}\ \partial_t {\bf m}\in C^0\big([0,T], H^\infty_*(\mathbb R^n, \mathbb R^3)\big).$$
Since the pull-back tangent bundle ${\bf m}^{-1}T\mathbb S^2$ is a trivial bundle on $\mathbb R^n\times [0,T]$, there exists a global orthonormal frame on
${\bf m}^{-1}T\mathbb S^2$: there are smooth tangent vector fields $X,Y\in C^\infty(\mathbb R^n\times [0,T], T_{\bf m}\mathbb S^2)
$ along ${\bf m}$ such that
$$|X|=|Y|=1, \ \langle X, Y\rangle =0, \ {\rm{and}}\  X\times Y={\bf m}.$$

We use $\alpha,\beta\in \{0,1,\cdots n\}$ to denote the space-time components, where $\alpha=0$ is the time index so that $\partial_0=\partial_t$.
Let $\langle\cdot,\cdot\rangle$ be the inner product on $\mathbb R^3$.
Denote the associated connection coefficient
$${\bf a}=(a_0,a_1,\cdots,a_n)=\big(\langle\partial_0 X,Y\rangle, \langle\partial_1 X,Y\rangle, \cdots, \langle\partial_nX,Y\rangle\big)
\in C^\infty(\mathbb R^n\times [0,T], \mathbb R^{n+1}).$$
This gives the covariant derivative $D_\alpha=\partial_\alpha+ia_\alpha$, $0\le\alpha\le n$.
Write
\begin{equation}
\label{complex-d}
{\bf u}:=(u_0, u)=(u_0, (u_1,\cdots, u_n))=\big(\langle\partial_\alpha {\bf m},X\rangle+i\langle\partial_\alpha{\bf m},Y\rangle\big)_{0\le\alpha\le n}:\mathbb R^n\times [0,T]\to\mathbb C^{n+1}
\end{equation}
for the coefficient of space-time gradient of ${\bf m}$ in terms of $X+iY$. Then we have
$$\partial_\alpha {\bf m}={\rm{Re}}(u_\alpha)X+{\rm{Im}}(u_\alpha)Y.$$
By the relations
$$\partial_\alpha X=-{\rm{Re}}(u_\alpha){\bf m}+a_\alpha Y,\ \ \ \partial_\alpha Y=-{\rm{Im}}(u_\alpha){\bf m}-a_\alpha X,$$
we have the zero torsion identity:
\begin{equation}
D_\alpha u_\beta=D_\beta u_\alpha,
\label{torsion}
\end{equation}
and the curvature identity:
\begin{equation}\label{curvature}
R_{\alpha\beta}:=[D_\alpha,D_\beta]=i{\rm{Im}}(u_\alpha u_\beta).
\end{equation}

\subsection{Covariant LLG}
Direct calculations imply
\begin{equation}\label{tension}
\Delta{\bf m}+|\nabla{\bf m}|^2 {\bf m}=\sum_{k=1}^n(\partial_k{\rm{Re}}(u_k)-a_k{\rm{Im}}(u_k)) X+\sum_{k=1}^n
(\partial_k{\rm{Im}}(u_k)+a_k{\rm{Re}}(u_k)) Y\ \ {\rm{in}}\ \ \mathbb R^n\times [0,T].
\end{equation}

By direct calculations, we will have the following result (see \cite{Melcher} Proposition 2).
\begin{thm}\label{covariant_LLG} For ${\bf m}_0\in H^\infty_*(\mathbb R^n,\mathbb S^2)$,
let ${\bf m}\in C^0\big([0,T], H^\infty_*(\mathbb R^n, \mathbb R^3)\big)$, with $\partial_t{\bf m}\in
C^0\big([0,T], H^\infty_*(\mathbb R^n, \mathbb R^3)\big)$, solve the LLG (\ref{1.1}) with initial data ${\bf m}_0$,
for some $T>0$.
Then
$$({\bf u}, {\bf a})\in C^0([0,T], H^\infty(\mathbb R^n, \mathbb C^{n+1}\times \mathbb R^{n+1}))$$
solves the covariant LLG:
\begin{equation}
\label{3.1}
\begin{cases}
u_0=(\lambda-i)\sum\limits_{k=1}^{n}D_ku_k, \\
D_\alpha u_\beta=D_\beta u_\alpha, \ 0\le\alpha,\beta\le n, \\
\partial_\alpha a_\beta-\partial_\beta a_\alpha={\rm{Im}}(u_\alpha \overline{u_\beta}),\ 0\le\alpha,\beta\le n.
\end{cases}
\end{equation}
Moreover, $u=(u_1,\cdots,u_n)$ solves the covariant complex Ginzburg-Landau equation:
\begin{equation}
\label{3.2}
\begin{cases}
D_0u_l=(\lambda-i)\sum\limits_{k=1}^{n}(D_k D_ku_l+R_{lk}u_k),\ \ \ 1\leq l\leq n,\\
u(0)=\langle\nabla {\bf m}_0,X\rangle+i\langle\nabla {\bf m}_0,Y\rangle.
\end{cases}
\end{equation}
\end{thm}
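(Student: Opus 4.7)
The result is an algebraic unfolding of LLG in the moving frame $(X,Y)$, combined with the torsion and curvature relations (\ref{torsion}) and (\ref{curvature}) already at hand. It follows Melcher \cite{Melcher}, whose gauge-fixing details I would cite directly.

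First, I would justify that $(\mathbf{u},\mathbf{a})\in C^0([0,T],H^\infty(\mathbb{R}^n,\mathbb{C}^{n+1}\times\mathbb{R}^{n+1}))$. Because $\mathbf{m}-\mathbf{m}_\infty\in H^\infty$, the pull-back bundle $\mathbf{m}^{-1}T\mathbb{S}^2$ is globally trivializable over $\mathbb{R}^n\times[0,T]$, so a smooth orthonormal frame $(X,Y)$ satisfying $X\times Y=\mathbf{m}$ exists; selecting a Coulomb-gauge frame in the spirit of H\'elein \cite{Helein} transfers the decay and regularity of $\mathbf{m}-\mathbf{m}_\infty$ to $X,Y$, and hence to $\mathbf{u}$ and $\mathbf{a}$. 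This is the only analytically delicate step; everything that follows is purely algebraic.

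Next, I would derive the first identity in (\ref{3.1}) by projecting LLG onto the tangent plane. Using $-\mathbf{m}\times(\mathbf{m}\times\Delta\mathbf{m})=\Delta\mathbf{m}+|\nabla\mathbf{m}|^2\mathbf{m}$ and the decomposition (\ref{tension}), the tension field reads $\tau X+\sigma Y$ with
$$\tau+i\sigma=\sum_{k=1}^n D_k u_k,$$
since $(\partial_k+ia_k)(\mathrm{Re}(u_k)+i\,\mathrm{Im}(u_k))$ expands to exactly the $k$-th summand in (\ref{tension}). The relations $\mathbf{m}\times X=Y$ and $\mathbf{m}\times Y=-X$, deduced from $X\times Y=\mathbf{m}$ via the Jacobi expansion of the triple product, give $\mathbf{m}\times\Delta\mathbf{m}=-\sigma X+\tau Y$, and therefore
$$\partial_t\mathbf{m}=-\mathbf{m}\times\Delta\mathbf{m}+\lambda(\Delta\mathbf{m}+|\nabla\mathbf{m}|^2\mathbf{m})=(\sigma+\lambda\tau)X+(-\tau+\lambda\sigma)Y.$$
Taking the complex tangent component yields
$$u_0=(\sigma+\lambda\tau)+i(-\tau+\lambda\sigma)=(\lambda-i)(\tau+i\sigma)=(\lambda-i)\sum_{k=1}^n D_k u_k,$$
as claimed. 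The remaining two identities in (\ref{3.1}) are exactly (\ref{torsion}) and the consequence of (\ref{curvature}) together with the direct computation $[D_\alpha,D_\beta]=i(\partial_\alpha a_\beta-\partial_\beta a_\alpha)$.

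Finally, I would obtain (\ref{3.2}) by applying $D_l$ to the first equation of (\ref{3.1}) and invoking the zero-torsion relation $D_0 u_l=D_l u_0$:
$$D_0 u_l=(\lambda-i)\sum_{k=1}^n D_l D_k u_k=(\lambda-i)\sum_{k=1}^n\bigl(D_kD_l u_k+R_{lk}u_k\bigr)=(\lambda-i)\sum_{k=1}^n\bigl(D_kD_k u_l+R_{lk}u_k\bigr),$$
where I used $[D_l,D_k]=R_{lk}$ and then torsion $D_l u_k=D_k u_l$ once more. The initial condition is just the definition of $u$ at $t=0$. The main obstacle, as noted, is the construction of a sufficiently regular global frame; every other step is bookkeeping.
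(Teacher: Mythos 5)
Your proof is correct, and it is precisely the computation that the paper outsources to Melcher \cite{Melcher} (Proposition~2) rather than reproducing. The algebra checks out: $\tau+i\sigma=\sum_k D_ku_k$ follows directly from expanding $D_ku_k$ and comparing with~(\ref{tension}); the triple-product identities $\mathbf{m}\times X=Y$, $\mathbf{m}\times Y=-X$ give $\mathbf{m}\times\Delta\mathbf{m}=-\sigma X+\tau Y$; and the factorization $(\sigma+\lambda\tau)+i(-\tau+\lambda\sigma)=(\lambda-i)(\tau+i\sigma)$ is exact. Your passage from (\ref{3.1}) to (\ref{3.2}) via torsion, commutation, and torsion again is also the standard route. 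The only place where you, like the paper, gloss over details is the claim that a frame can be chosen making $(\mathbf{u},\mathbf{a})$ lie in $C^0([0,T],H^\infty)$; invoking a Coulomb-gauge (or any suitably decaying) frame as you do is the right idea, but the paper also takes that as given. No genuine gap.
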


\subsection{Gauge invariance and Coulomb gauges}
Since (\ref{3.1}) is invariant under the gauge transformation:
\begin{equation}
{\bf u}\mapsto \widetilde {\bf u}=e^{-i\theta}{\bf u}\ \ \ \mbox{and}\ \ \ a_\alpha\mapsto \widetilde {a}_\alpha=a_\alpha+\partial_\alpha\theta,
\ 0\le\alpha\le n, \label{gauge-tran}
\end{equation}
for any $\theta\in C^\infty(\mathbb R^n\times [0, T])$.
A canonical choice is the Coulomb gauge which ensures that $\widetilde a=(\widetilde{a}_1,\cdots, \widetilde{a}_n)$ is divergence free:
\begin{equation}
{\rm{div}}\ \widetilde{a}=\sum_{k=1}^n \partial_k \widetilde{a}_k=0 \quad {\rm{in}}\quad \mathbb R^n. \label{gauge-tran1}
\end{equation}
This amounts to solve, for $t\in [0,T]$,
\begin{equation}
-\triangle\theta(t)={\rm{div}} (a(t)) \quad {\rm{in}}\quad \mathbb R^n, \label{gauge-tran2}
\end{equation}
whose solution is given by
$$\theta(x,t)=(-\Delta)^{-1}({\rm{div}} a)(x,t)=c_n \int_{\mathbb R^n}\frac{x-y}{|x-y|^n}\cdot a(y,t)\,dy.$$
Thus
$$\nabla_l\theta(t)=\nabla_l(-\triangle)^{-1}{\rm{div}} (a(t))=\sum_{j=1}^n{\bf R}_l{\bf R}_ja_j(t),\ 1\le l\le n,$$
where ${\bf R}_l=\nabla_l (-\Delta)^\frac12$ is the $l^{\rm{th}}$-Riesz transform (see \cite{Stein}).
Since $a, \partial_t a\in C^0\big([0,T], H^\infty_*(\mathbb R^n)\big)$, we conclude, by the standard elliptic theory,
that $\nabla\theta, \partial_t\theta\in C^0\big([0,T], H^\infty_*(\mathbb R^n)\big)$.

\subsection{Estimates of $a$ in Morrey spaces}

\begin{lemma}\label{le:4.1} For $p>2$ and $\frac{p}{2}<q\leq n$,
if $u(t)\in M^{p,q}(\mathbb R^n)$ and $\nabla u(t)\in M^{2,2}(\mathbb R^n)$ for $0<t\le T$,
then we have, under the Coulomb gauge,  that
\begin{equation}
\label{4.2}\Big\|a(t)\Big\|_{M^{\widetilde{p},q}(\mathbb R^n)}\lesssim\Big\|u(t)\Big\|_{M^{p,q}(\mathbb R^n)}^2, \ 0<t\le T,
\end{equation}
where $\displaystyle\widetilde{p}=\frac{pq}{2q-p}$. If, in addition, $q<\frac{2p}{4-p}$, then
 there is a decomposition $a_0(t)=a_0^{(1)}(t)+a_0^{(2)}(t)$ such that
\begin{equation}
\label{4.3}
\Big\|a_0^{(1)}(t)\Big\|_{M^{\frac{2p(p+q)}{(p+2)q},\frac{2(p+q)}{p+2}}(\mathbb R^n)}\lesssim\Big\|u(t)\Big\|_{M^{p,q}(\mathbb R^n)}\Big\|\nabla u(t)\Big\|_{M^{2,2}(\mathbb R^n)},
\ 0<t\le T,
\end{equation}
and
\begin{equation}
\label{4.4}
\Big\|a_0^{(2)}(t)\Big\|_{M^{\frac{\widetilde{p}}2,q}(\mathbb R^n)}\lesssim\Big\|u(t)\Big\|_{M^{p,q}(\mathbb R^n)}^4,
\ 0<t\le T.
\end{equation}
\end{lemma}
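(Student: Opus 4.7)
The plan is to use the Coulomb gauge to reconstruct both $a$ and $a_0$ from algebraic nonlinearities in $(u,\nabla u,a)$ via the Riesz potential $I_1=(-\Delta)^{-1/2}$ and the Riesz transforms $R_j$, and then to deploy two standard Morrey-space facts: the H\"older product rule
\[
\|fg\|_{M^{p_3,q_3}(\mathbb R^n)}\lesssim\|f\|_{M^{p_1,q_1}(\mathbb R^n)}\|g\|_{M^{p_2,q_2}(\mathbb R^n)}, \quad \frac{1}{p_3}=\frac{1}{p_1}+\frac{1}{p_2}, \ \ \frac{q_3}{p_3}=\frac{q_1}{p_1}+\frac{q_2}{p_2},
\]
together with Adams' inequality $\|I_1 f\|_{M^{\widetilde{p},q}(\mathbb R^n)}\lesssim\|f\|_{M^{p,q}(\mathbb R^n)}$ with $\widetilde{p}=pq/(q-p)$ for $1<p<q\leq n$, and the $L^p$-boundedness of $R_j$ on $M^{p,q}(\mathbb R^n)$ for $1<p<\infty$.

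For (\ref{4.2}): apply ${\rm{div}}_x$ to the curvature identity $\partial_j a_k-\partial_k a_j={\rm{Im}}(u_j\overline{u_k})$ and use the Coulomb condition ${\rm{div}}\,a=0$ to obtain the Poisson equation
\[
-\Delta a_k=-\sum_{j=1}^n\partial_j\,{\rm{Im}}(u_j\overline{u_k}), \qquad 1\le k\le n,
\]
so that $a_k=-\sum_j I_1 R_j\,{\rm{Im}}(u_j\overline{u_k})$. By H\"older in Morrey spaces, $\|u\overline{u}\|_{M^{p/2,q}(\mathbb R^n)}\lesssim\|u\|_{M^{p,q}(\mathbb R^n)}^2$, and (since $p>2$ and $p/2<q$) the $I_1R_j$ estimate delivers $a\in M^{pq/(2q-p),q}(\mathbb R^n)$, which is (\ref{4.2}).

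For (\ref{4.3})--(\ref{4.4}): the same derivation with $\alpha=0$, $\beta=k$ gives $\partial_k a_0=\partial_t a_k-{\rm{Im}}(u_0\overline{u_k})$; taking ${\rm{div}}_x$ and using $\partial_t({\rm{div}}\,a)=0$ yields $-\Delta a_0=\sum_k\partial_k\,{\rm{Im}}(u_0\overline{u_k})$, so $a_0=\sum_k I_1R_k\,{\rm{Im}}(u_0\overline{u_k})$. Substituting $u_0=(\lambda-i)\sum_j D_j u_j=(\lambda-i)\sum_j(\partial_j u_j+i a_j u_j)$ and separating real and imaginary parts splits ${\rm{Im}}(u_0\overline{u_k})$ into a \emph{bilinear} piece of type $\nabla u\cdot\overline{u}$ and a \emph{trilinear} piece of type $a\cdot u\cdot\overline{u}$; define $a_0^{(1)}$ and $a_0^{(2)}$ to be the respective $I_1R_k$-images. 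For $a_0^{(1)}$, H\"older with $\nabla u\in M^{2,2}(\mathbb R^n)$ and $u\in M^{p,q}(\mathbb R^n)$ places $\nabla u\cdot\overline{u}$ in $M^{\frac{2p}{p+2},\frac{2(p+q)}{p+2}}(\mathbb R^n)$, and Adams' $I_1$ lands the outcome in $M^{\frac{2p(p+q)}{(p+2)q},\frac{2(p+q)}{p+2}}(\mathbb R^n)$, giving (\ref{4.3}). For $a_0^{(2)}$, using the already proved (\ref{4.2}) we have $a\in M^{\widetilde{p},q}(\mathbb R^n)$, so iterated H\"older in Morrey spaces yields $\|a\,u\,\overline{u}\|_{M^{pq/(4q-p),q}(\mathbb R^n)}\lesssim\|a\|_{M^{\widetilde p,q}}\|u\|_{M^{p,q}}^2\lesssim\|u\|_{M^{p,q}(\mathbb R^n)}^4$, and Adams' $I_1R_k$ places this in $M^{\widetilde{p}/2,q}(\mathbb R^n)$, giving (\ref{4.4}).

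The main obstacle I expect is the bookkeeping of Morrey exponents and the verification that they remain in the range where Riesz transforms are bounded and Adams' inequality applies. In particular, the extra hypothesis $q<2p/(4-p)$ (nontrivial only for $p<4$) is precisely the inequality $p(q+2)>4q$ that guarantees $\widetilde{p}/2>1$, keeping the output of the trilinear estimate inside a genuine Morrey space; the remaining admissibility conditions follow directly from $p>2$ and $p/2<q\le n$.
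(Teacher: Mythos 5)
Your proof is essentially the same as the paper's: you derive the identical Poisson equations $-\Delta a_\beta = \mathrm{div}\,\mathrm{Im}(u_\beta\overline{u})$, make the same split $a_0=a_0^{(1)}+a_0^{(2)}$ by substituting $u_0=(\lambda-i)\sum_k D_ku_k$ and separating the $\nabla u\cdot\overline{u}$ piece from the $a\cdot u\cdot\overline{u}$ piece, and then close with the same Morrey-H\"older product rule and Adams' $I_1$ estimate. The only cosmetic difference is that you factor $\nabla(-\Delta)^{-1}$ as $I_1R_j$ while the paper just uses the pointwise bound $|\nabla G(x-y)|\lesssim|x-y|^{1-n}$ so only the Riesz potential is needed; both yield the same exponents.
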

\begin{proof}
Since $a$ is a Coulomb gauge and
$$\partial_\alpha {a}_\beta(t)-\partial_\beta {a}_\alpha(t)={\rm{Im}}\big({u}_\alpha \overline{u}_\beta\big)(t)
\ \ {\rm{in}}\ \ \mathbb R^n, \ 0\le\alpha,\beta\le n,$$
we have, after taking $\partial_\alpha$ of the above equation and summing over $1\le\alpha\le n$, that
\begin{equation}
\label{4.5}-\triangle {a}_\beta(t)={\rm{div(Im}}\big({u}_\beta \overline{u})\big)(t) \ \ {\rm{in}}\ \ \mathbb R^n,
\ \ 0\le\beta\le n.
\end{equation}
It is readily seen that ${a}$ can be represented by
\begin{eqnarray*}
 {a}_\beta(x,t)=\int_{\mathbb R^n}G(x-y){\rm{div}}({\rm{Im}}({u}_\beta\overline{{u}}))(y,t)\,dy
=-\int_{\mathbb R^n} \nabla G(x-y)\cdot {\rm{Im}}({u}_\beta\overline{{u}})(y,t)\,dy, \ x\in\mathbb R^n,
\end{eqnarray*}
for $0\le\beta\le n$, where $G$ is the fundamental solution of ($-\Delta$) in $\mathbb R^n$.
Therefore we have
\begin{eqnarray*}
\big|{a}(x,t)\big|
\lesssim\int_{\mathbb R^n}|x-y|^{1-n}|{u}(t)|^2(y)\,dy:={\bf I}_1(|{u}(t)|^2)(x), \ x\in\mathbb R^n,
\end{eqnarray*}
where
$$\displaystyle {\bf I}_1(f)(x)=\int_{\mathbb R^n} |x-y|^{1-n} f(y)\,dy, \ f\in L^1_{\rm{loc}}(\mathbb R^n), $$
is  the Riesz potential of $f$ of order $1$.

Since ${\bf I}_1:M^{\frac{p}{2},q}(\mathbb R^n)\rightarrow M^{\widetilde{p},q}(\mathbb R^n)$ is
a bounded linear operator (see \cite{HW1}), we have that ${a}(t)\in M^{\widetilde{p}, q}(\mathbb R^n)$
and
$$
\Big\| {a}(t) \Big\|_{M^{\widetilde{p}, q}(\mathbb R^n)}\lesssim\Big\||{u}(t)|^2\Big\|_{M^{\frac{p}2, q}(\mathbb R^n)}
\lesssim \Big\|{u}(t)\Big\|_{M^{p, q}(\mathbb R^n)}^2.
$$
This implies (\ref{4.2}).

For $a_0$,  we have, by (\ref{3.1}),
\begin{eqnarray}
-\Delta a_0(t)&=&{\rm{div}}\big({\rm{Im}}(\overline{u}u_0)(t)\big)={\rm{div}}\big[{\rm{Im}}((\lambda-i)\overline{u}D_l u_l)(t)\big]\nonumber\\
&=&{\rm{div}}\big[{\rm{Im}}((\lambda-i)\overline{u}{\rm{div}}u)(t)+ {\rm{Re}}((\lambda-i)(a\cdot u)\overline{u})(t)\big].
\end{eqnarray}
We define $a_0^{(1)}$ and $a_0^{(2)}$ by
\begin{equation}
\label{4.6}
\begin{cases}
-\Delta a_0^{(1)}(t)={\rm{div}}\big[\lambda{\rm{Im}}(\overline{u}{\rm{div}}u)(t)-{\rm{Re}}(\overline{u}{\rm{div}}u)(t)\big],\\
-\Delta a_0^{(2)}(t)={\rm{div}}\big[\lambda{\rm{Re}}((a\cdot u)\overline{u})(t)+{\rm{Im}}((a\cdot u)\overline{u})(t)\big].
\end{cases}
\end{equation}
It is clear that $a_0(t)=a_0^{(1)}(t)+a_0^{(2)}(t)$. Direct calculations give
$$\big|a_0^{(1)}(x,t)\big|\lesssim {\bf I}_1\big(|u||\nabla u|\big)(x,t), \ x\in\mathbb R^n,$$
and
$$
\big|a_0^{(2)}(x,t)\big|\lesssim {\bf I}_1\big(|a||u|^2\big)(x,t), \ x\in\mathbb R^n.
$$
As above, we can show that $a_0^{(1)}(t)\in M^{\frac{2p(p+q)}{(p+2)q}, \frac{2(p+q)}{p+2}}(\mathbb R^n)$, and
$$\Big\|a_0^{(1)}(t)\Big\|_{M^{\frac{2p(p+q)}{(p+2)q}, \frac{2(p+q)}{p+2}}(\mathbb R^n)}
\lesssim \Big\|u(t)\Big\|_{M^{p,q}(\mathbb R^n)}\Big\|\nabla u(t)\Big\|_{M^{2,2}(\mathbb R^n)}.
$$
This yields (\ref{4.3}).
Similarly, we can show that $a_0^{(2)}(t)\in M^{\frac{\widetilde{p}}2, q}(\mathbb R^n)$ and
$$\Big\|a_0^{(2)}(t)\|_{M^{\frac{\widetilde{p}}{2}, q}(\mathbb R^n)}
\lesssim \Big\|a(t)\Big\|_{M^{\widetilde{p},q}(\mathbb R^n)}\Big\|u(t)\Big\|_{M^{p,q}(\mathbb R^n)}^2
\lesssim \Big\|u(t)\Big\|_{M^{p,q}(\mathbb R^n)}^4.$$
This yields (\ref{4.4}). This completes the proof.
\end{proof}

\setcounter{section}{3}
\setcounter{equation}{0}
\section{Estimates in Morrey spaces for the Covariant Landau-Lifshitz-Gilbert equation}

In this section, we consider a solution $u\in C^0([0,T], H^\infty(\mathbb R^n, \mathbb C^n))$ of the covariant complex Ginzburg-Landau equation (\ref{3.2}) under
the Coulomb gauge, and derive the necessary estimates in suitable Morrey spaces.

\subsection{Nonlinear estimates in Morrey spaces}
First recall that under the Coulomb gauge, the equation (\ref{3.2}) can be written as
\begin{equation}\label{CGL}
\partial_t u=(\lambda-i)\Delta u+F({\bf a}, u) \ \ {\rm{in}}\ \ \mathbb R^n\times (0,T],
\end{equation}
where the nonlinearity $F=(F_1,\cdots, F_n)$ is given by
\begin{equation}\label{F}
F_l({\bf a}, u)=(\lambda-i)\Big[i\sum_{k=1}^n\Big({\rm{Im}}(u_l\overline{u}_k)u_k\Big)+2i(a\cdot\nabla)u_l-|a|^2 u_l\Big]-i\big(a_0^{(1)}+a_0^{(2)}\big) u_l, \  \ 1\le l\le n.
\end{equation}
$F$ can be written as
$$F=f^{(1)}+f^{(2)}+f^{(3)},$$
where
\begin{equation}\label{F1}
\begin{cases}
f^{(1)}(u)=(\lambda-i)i\displaystyle\sum_{k=1}^n{\rm{Im}}(u\overline{u}_k)u_k,\\
f^{(2)}(u)=(\lambda-i)2i(a\cdot\nabla)u-i a_0^{(1)} u,\\
f^{(3)}(u)=-(\lambda-i)|a|^2 u-i a_0^{(2)} u.
\end{cases}
\end{equation}

For $p>2$ to be determined later, let $X_T^p$ be the function space defined by
\begin{eqnarray}
{\bf X}_T^p&:=&\Big\{u:\mathbb R^n\times[0,T)\rightarrow \mathbb C^{n}\ \Big|\ \
\big\|u \big\|_{{\bf X}_T^p}\equiv\sup\limits_{0<t<T}\Big(t^{\frac12-\frac{1}{p}}\big\|u(t)\big\|_{M^{p,2}(\mathbb R^n)}+t^\frac12\big\|\nabla u(t)\big\|_{M^{2,2}(\mathbb R^n)}\Big)
\nonumber\\
&&\qquad\qquad\qquad\qquad\qquad\qquad\qquad\qquad+\sup\limits_{0\le t<T}\Big\|u(t)\Big\|_{M^{2,2}(\mathbb R^n)}<+\infty\Big\}. \label{x-space}
\end{eqnarray}
For $u\in X_T^p$ and $0<\tau\le T$, set
$$\mathcal R_1(\tau)=\sup\limits_{0\leq t\leq \tau} t^{\frac12-\frac{1}{p}}\big\|u(t)\big\|_{M^{p,2}(\mathbb R^n)},$$
$$\mathcal R_2(\tau)=\sup\limits_{0\leq t\leq \tau} t^\frac12\big\|\nabla u(t)\big\|_{M^{2,2}(\mathbb R^n)},$$
and
$$\mathcal R_3(\tau)=\sup_{0\le t\le \tau}\big\|u(t)\big\|_{M^{2,2}(\mathbb R^n)}.$$
Then it holds
$$\big\|u\big\|_{X_T^p}=\mathcal R_1(T)+\mathcal R_2(T)+\mathcal R_3(T).
$$
For $\delta_1, \delta_2>0$, set
$$\mathbb B[\delta_1, \delta_2]:=\int_0^1 (1-t)^{-\delta_1}t^{-\delta_2}\,dt.$$
Observe that $\mathbb B[\delta_1, \delta_2]<+\infty$ when $\delta_1, \delta_2<1$.

\medskip
\noindent{4.1.1}. {\bf Morrey space estimates related to $f^{(1)}(u)$}.
Now we have
\begin{lemma}\label{le:5.4} For $p\in (3,6)$ and $u\in X_T^p,$ we have that for $0\le t\le T$,
\begin{equation}
\label{5.4} t^{\frac12-\frac{1}{p}}\Big\|\int_0^tS(t-s)f^{(1)}(u(s))\,ds\Big\|_{M^{p,2}(\mathbb R^n)}\lesssim \mathcal R_1^3(t).\end{equation}
\end{lemma}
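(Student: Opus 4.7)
The strategy is the standard Duhamel-type estimate: bound $|f^{(1)}(u)|$ pointwise by $|u|^3$, transfer it to a Morrey norm at a smaller Lebesgue index, then use the linear interpolation smoothing estimate (Lemma \ref{le:2.3}) for $S(t-s)$ to return to $M^{p,2}(\mathbb R^n)$, and finally collapse the time integral into a Beta function.

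\textbf{Step 1 (Pointwise and Morrey bound on $f^{(1)}$).} From \eqref{F1} we get $|f^{(1)}(u)| \lesssim |u|^{3}$. Using the homogeneity of the Morrey seminorm,
\[
\Big\|f^{(1)}(u(s))\Big\|_{M^{p/3,2}(\mathbb R^n)} \;\lesssim\; \Big\||u(s)|^{3}\Big\|_{M^{p/3,2}(\mathbb R^n)} \;=\; \Big\|u(s)\Big\|_{M^{p,2}(\mathbb R^n)}^{3}.
\]
The restriction $p>3$ enters exactly here, so that $p/3>1$ and the base space $M^{p/3,2}(\mathbb R^n)$ lies in the range allowed by Lemma \ref{le:2.3}; the restriction $p\le (p/3)(n+1)$ needed for the interpolation is automatic since $n\ge 2$.

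\textbf{Step 2 (Smoothing via Lemma \ref{le:2.3}).} Apply Lemma \ref{le:2.3} with base exponent $p/3$, target exponent $p$, and $q=2$:
\[
\Big\|S(t-s)f^{(1)}(u(s))\Big\|_{M^{p,2}(\mathbb R^n)} \;\lesssim\; (t-s)^{-\frac{2}{2}\left(\frac{3}{p}-\frac{1}{p}\right)} \Big\|u(s)\Big\|_{M^{p,2}(\mathbb R^n)}^{3} \;=\; (t-s)^{-\frac{2}{p}}\,\Big\|u(s)\Big\|_{M^{p,2}(\mathbb R^n)}^{3}.
\]

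\textbf{Step 3 (Insert the $\mathcal R_1$ profile and integrate in time).} By definition of $\mathcal R_1(t)$ we have, for $0<s\le t$,
\[
\Big\|u(s)\Big\|_{M^{p,2}(\mathbb R^n)} \;\le\; s^{-(\frac{1}{2}-\frac{1}{p})}\,\mathcal R_1(t),
\]
so Minkowski's inequality and Step 2 yield
\[
\Big\|\int_0^{t}S(t-s)f^{(1)}(u(s))\,ds\Big\|_{M^{p,2}(\mathbb R^n)} \;\lesssim\; \mathcal R_1^{3}(t)\int_0^{t}(t-s)^{-\frac{2}{p}}\,s^{-3(\frac{1}{2}-\frac{1}{p})}\,ds.
\]
A change of variables $s=t\tau$ shows this last integral equals $t^{\,1-\frac{2}{p}-\frac{3}{2}+\frac{3}{p}}\,\mathbb B\!\left[\tfrac{2}{p},\,\tfrac{3}{2}-\tfrac{3}{p}\right]=t^{-(\frac{1}{2}-\frac{1}{p})}\,\mathbb B\!\left[\tfrac{2}{p},\,\tfrac{3}{2}-\tfrac{3}{p}\right]$. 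Multiplying by $t^{\frac{1}{2}-\frac{1}{p}}$ gives exactly \eqref{5.4}.

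\textbf{Main obstacle.} The only real delicacy is verifying both Beta-function exponents are admissible: convergence at $s=t$ needs $\frac{2}{p}<1$ (trivial since $p>3$), and convergence at $s=0$ needs $\frac{3}{2}-\frac{3}{p}<1$, i.e.\ $p<6$. These are precisely the two endpoints of the hypothesis $p\in(3,6)$, confirming that no room is wasted and that \eqref{5.4} is sharp with respect to the singular profiles encoded in $\mathcal R_1$.
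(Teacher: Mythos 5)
Your proof is correct and follows essentially the same route as the paper's: pointwise bound $|f^{(1)}(u)|\lesssim|u|^3$, pull the Morrey norm inside via Minkowski, apply the interpolation estimate (Lemma \ref{le:2.3}) from $M^{p/3,2}$ to $M^{p,2}$, insert the $\mathcal R_1$ profile, and collapse the time integral into the Beta function $\mathbb B[\frac2p,\frac32-\frac3p]$ after scaling. Your explicit accounting of where each endpoint of $p\in(3,6)$ is used (the lower endpoint $p>3$ from requiring the base Morrey exponent $p/3>1$ in Lemma \ref{le:2.3}, the upper endpoint $p<6$ from convergence of the Beta integral at $s=0$, and the automatic $n\ge2$ restriction from $p\le (p/3)(n+1)$) makes explicit what the paper only implicitly encodes in the statement of the hypothesis.
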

\begin{proof} Since $|f^{(1)}(u)|\lesssim |u|^3$, by Minkowski's inequality and Lemma \ref{2.3} we have
\begin{eqnarray*}
\Big\|\int_0^tS(t-s)f^{(1)}(u(s))\,ds\Big\|_{M^{p,2}(\mathbb R^n)}&\lesssim&\int_0^t\Big\|S(t-s) f^{(1)}(u(s))\Big\|_{M^{p,2}(\mathbb R^n)}\,ds\\
&\lesssim&\big(\int_0^t(t-s)^{-\frac{2}{p}}s^{-\frac32(1-\frac{2}{p})}\,ds\big)\mathcal R_1^3(t)\\
&\lesssim&  \mathbb B\big[\frac{2}{p}, \frac32(1-\frac2{p})\big] t^{1-\frac{2}{p}-\frac32(1-\frac{2}{p})}\mathcal R_1^3(t)\\
&\lesssim &t^{\frac{1}{p}-\frac12}\mathcal R_1^3(t),
\end{eqnarray*}
since $\mathbb B\big[\frac{2}{p}, \frac32(1-\frac2{p})\big]<+\infty$ when $p\in (3,6)$.
Hence Lemma \ref{le:5.4} is proved.
\end{proof}

\begin{lemma}\label{lem5.7} For $p\in(3,6)$ and $u\in X^p_T$,  we have that for $0\le t\le T$,
\begin{equation}
\label{5.7} t^\frac12\Big\|\nabla\int_0^tS(t-s)f^{(1)}(u(s))\,ds\Big\|_{M^{2,2}(\mathbb R^n)}\lesssim \mathcal R_1^3(t).
\end{equation}
\end{lemma}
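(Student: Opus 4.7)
\medskip

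\noindent\textbf{Proof plan for Lemma \ref{lem5.7}.} The strategy is a direct gradient analog of the argument for Lemma \ref{le:5.4}, using the gradient estimates (Lemmas \ref{le:5.1}--\ref{le:5.3}) in place of Lemmas \ref{le:2.1}--\ref{le:2.3}. First I would commute the spatial gradient past the time integral (justified since $u\in C^0([0,T],H^\infty)$, so $f^{(1)}(u(s))$ is smooth in $x$) and apply Minkowski's inequality to bring the $M^{2,2}$-norm inside:
\[
\Big\|\nabla\int_0^t S(t-s)f^{(1)}(u(s))\,ds\Big\|_{M^{2,2}(\mathbb{R}^n)}
\le\int_0^t \Big\|\nabla S(t-s)f^{(1)}(u(s))\Big\|_{M^{2,2}(\mathbb{R}^n)}\,ds.
\]

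Next I would place $f^{(1)}(u(s))$ in an appropriate Morrey space. From \eqref{F1} the pointwise bound $|f^{(1)}(u)|\lesssim |u|^3$ gives, by unwinding the definition of the Morrey norm,
\[
\big\|f^{(1)}(u(s))\big\|_{M^{p/3,\,2}(\mathbb{R}^n)}\lesssim \big\|u(s)\big\|_{M^{p,2}(\mathbb{R}^n)}^{3}\lesssim s^{-3(\frac12-\frac1p)}\mathcal{R}_1^3(t).
\]
The key choice is to apply the interpolation bound of Lemma \ref{le:5.3} with input exponent $p/3$ and target exponent $\widetilde{p}=2$ (both with $q=2$). For $p\in(3,6)$ we have $p/3\in(1,2)$, so the hypothesis $p/3\le 2\le (p/3)(n+1)$ holds; the resulting time exponent is
\[
-\tfrac12-\tfrac{q}{2}\big(\tfrac{3}{p}-\tfrac12\big)=-\tfrac12-\tfrac{3}{p}+\tfrac12=-\tfrac{3}{p},
\]
giving
\[
\big\|\nabla S(t-s)f^{(1)}(u(s))\big\|_{M^{2,2}(\mathbb{R}^n)}\lesssim (t-s)^{-3/p}s^{-3(\frac12-\frac1p)}\mathcal{R}_1^3(t).
\]

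Finally I would integrate in $s$: the change of variables $s=t\sigma$ yields
\[
\int_0^t (t-s)^{-3/p}s^{-\frac32+\frac3p}\,ds=t^{-1/2}\,\mathbb{B}\!\left[\tfrac{3}{p},\,\tfrac32-\tfrac{3}{p}\right],
\]
and both exponents $3/p$ and $3/2-3/p$ lie strictly in $(0,1)$ precisely when $p\in(3,6)$, so $\mathbb{B}[3/p,3/2-3/p]<\infty$. Multiplying by $t^{1/2}$ recovers \eqref{5.7}. The only nontrivial step is confirming that $p\in(3,6)$ is exactly the range in which (a) the interpolation Lemma \ref{le:5.3} applies with these exponents and (b) the beta-type time integral converges; this is the reason the same range appears here as in Lemma \ref{le:5.4}.
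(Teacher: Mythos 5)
Your proposal is correct and follows essentially the same route as the paper's proof: Minkowski's inequality, then the gradient interpolation bound (Lemma~\ref{le:5.3}) applied with source exponent $p/3$ and target exponent $2$ at $q=2$ to produce the factor $(t-s)^{-3/p}$, followed by the beta-type time integral with exponents $3/p$ and $3/2-3/p$, both in $(0,1)$ for $p\in(3,6)$. The only difference is cosmetic: you explicitly verify the admissibility condition $p/3\le 2\le (p/3)(n+1)$, which the paper leaves implicit.
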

\begin{proof} By Minkowski's inequality and Lemma 2.6, we have
\begin{eqnarray*}
&&\Big\|\nabla\int_0^tS(t-s) f^{(1)}(u(s))\,ds\Big\|_{M^{2,2}(\mathbb R^n)}
\lesssim\int_0^t\Big\|\nabla(S(t-s)f^{(1)}(u(s)))\Big\|_{M^{2,2}(\mathbb R^n)}\,ds\\
&&\lesssim\int_0^t(t-s)^{-\frac12-(\frac{3}{p}-\frac{1}{2})}\Big\||u(s)|^3\Big\|_{M^{\frac{p}{3},2}(\mathbb R^n)}\,ds\\
&&\lesssim\int_0^t(t-s)^{-\frac{3}{p}}\Big\|u(s)\Big\|_{M^{p,2}(\mathbb R^n)}^3\,ds\\
&&\lesssim \mathcal R_1^3(t)\int_0^t(t-s)^{-\frac{3}{p}}s^{-\frac32(1-\frac{1}{p})}\,ds\\
&&\lesssim \mathbb B\big[\frac3p, \frac32(1-\frac2{p})\big] t^{-\frac{1}{2}} \mathcal R_1^3(t)\\
&&\lesssim t^{-\frac12}\mathcal R_1^3(t),
\end{eqnarray*}
since $\mathbb B\big[\frac3p, \frac32(1-\frac2{p})\big]<+\infty$ when $3<p<6$.
This implies (\ref{5.7}).  Hence Lemma \ref{lem5.7} is proved.
\end{proof}

\begin{lemma}\label{le:5.10} For $p\in (3,6)$ and $u\in X_T^p$, we have that for $0\le t\le T$,
\begin{equation}
\label{5.10}\Big\|\int_0^tS(t-s) f^{(1)}(u(s))\,ds\Big\|_{M^{2,2}(\mathbb R^n)}\lesssim \mathcal R_1^3(t).
\end{equation}
\end{lemma}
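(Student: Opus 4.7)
The plan is to mimic the proof of Lemma 4.4 (the one labeled 5.4 in the excerpt), but applying the interpolation lemma (Lemma 2.3) to land in $M^{2,2}$ rather than $M^{p,2}$. First I would invoke Minkowski's inequality to pull the $M^{2,2}$-norm inside the time integral:
\begin{equation*}
\Big\|\int_0^tS(t-s) f^{(1)}(u(s))\,ds\Big\|_{M^{2,2}(\mathbb R^n)}\le \int_0^t\Big\|S(t-s) f^{(1)}(u(s))\Big\|_{M^{2,2}(\mathbb R^n)}\,ds.
\end{equation*}

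Since $|f^{(1)}(u)|\lesssim |u|^3$, we have $|f^{(1)}(u(s))|\in M^{p/3,2}(\mathbb R^n)$ with norm controlled by $\|u(s)\|_{M^{p,2}(\mathbb R^n)}^3$. The range $p\in(3,6)$ guarantees $p/3\in(1,2)$, so that the endpoint $\widetilde{p}=2$ lies in the admissible range $[p/3,\,p(n+1)/3]$ for Lemma 2.3 (recall $n\ge 2$). Applying Lemma 2.3 with $(p,q,\widetilde p)$ replaced by $(p/3,2,2)$ yields
\begin{equation*}
\Big\|S(t-s) f^{(1)}(u(s))\Big\|_{M^{2,2}(\mathbb R^n)}\lesssim (t-s)^{-\left(\frac{3}{p}-\frac12\right)}\Big\|u(s)\Big\|_{M^{p,2}(\mathbb R^n)}^3.
\end{equation*}

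Using the definition of $\mathcal{R}_1(t)$, we get $\|u(s)\|_{M^{p,2}(\mathbb R^n)}^3\le s^{-3(\frac12-\frac{1}{p})}\mathcal{R}_1^3(t)$ for $0<s\le t$. Hence
\begin{equation*}
\Big\|\int_0^tS(t-s) f^{(1)}(u(s))\,ds\Big\|_{M^{2,2}(\mathbb R^n)}\lesssim \mathcal{R}_1^3(t)\int_0^t (t-s)^{-\left(\frac{3}{p}-\frac12\right)} s^{-\frac32\left(1-\frac{2}{p}\right)}\,ds.
\end{equation*}
Rescaling $s=t\tau$ turns this into $t^{1-\left(\frac{3}{p}-\frac12\right)-\frac32\left(1-\frac{2}{p}\right)}\,\mathbb B\!\left[\frac{3}{p}-\frac12,\,\frac32\left(1-\frac{2}{p}\right)\right]$. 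A direct check shows the exponent on $t$ equals $0$, so no time weight survives.

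The only real obstacle is verifying that the Beta integral is finite, i.e., that both exponents are strictly less than $1$. For $\frac{3}{p}-\frac12<1$ we need $p>2$, which is automatic. For $\frac32(1-\frac{2}{p})<1$ we need $p<6$; and the positivity $\frac{3}{p}-\frac12>0$ demands $p<6$ as well (otherwise the kernel is integrable trivially for $p=6$, but the interpolation lemma already required $p\le 6$). Together with the lower bound $p>3$ (guaranteeing $p/3>1$ so Lemma 2.3 applies in the stated range), the open condition $p\in(3,6)$ is used in an essentially sharp way, exactly as in Lemmas 4.4 and 4.5 above. This delivers the bound $\mathcal{R}_1^3(t)$ and completes the proof.
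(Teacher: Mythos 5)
Your proof is correct and follows essentially the same route as the paper: Minkowski's inequality, the pointwise bound $|f^{(1)}(u)|\lesssim|u|^3$, Lemma 2.3 applied to land in $M^{2,2}$ from $M^{p/3,2}$ with time decay $(t-s)^{-(3/p-1/2)}$, and evaluation of the resulting Beta integral with zero net power of $t$. The bookkeeping of the constraints $3<p<6$ is accurate.
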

\begin{proof} By Minkowski's inequality and Lemma 2.3. we have
\begin{eqnarray*}
&&\Big\|\int_0^tS(t-s)f^{(1)}(u(s))\,ds\Big\|_{M^{2,2}(\mathbb R^n)}\lesssim\int_0^t\big\|S(t-s) f^{(1)}(u(s))\big\|_{M^{2,2}(\mathbb R^n)}\,ds\\
&&\lesssim\int_0^t(t-s)^{-(\frac{3}{p}-\frac{1}{2})}\Big\||u(s)|^3\Big\|_{M^{\frac{p}{3},2}(\mathbb R^n)}\,ds\\
&&\lesssim\int_0^t(t-s)^{-(\frac{3}{p}-\frac{1}{2})}\Big\|u(s)\Big\|_{M^{p,2}(\mathbb R^n)}^3\,ds\\
&&\lesssim \mathbb B\Big[\frac3p-\frac12, \frac32(1-\frac2{p})\Big] \mathcal R_1^3(t)\\
&&\lesssim \mathcal R_1^3(t),
\end{eqnarray*}
since $\mathbb B\Big[\frac3p-\frac12, \frac32(1-\frac2{p})\Big]<+\infty$ when $3<p<6$.
Hence Lemma \ref{le:5.10} is proved.
\end{proof}

\bigskip
\noindent{4.1.2}. {\bf Morrey space estimates related to $f^{(2)}(u)$}. Observe that
$$\big|f^{(2)}(u)\big|\lesssim |a||\nabla u|+|a_0^{(1)}||u|.$$
Now we have
\begin{lemma}\label{le:5.6} For $p\in (2,4)$ and $u\in X_T^p$, we have that for $0\le t\le T$,
\begin{equation}
\label{5.6}\Big\|\int_0^tS(t-s) f^{(2)}(u(s))\,ds\Big\|_{M^{p,2}(\mathbb R^n)}\lesssim t^{-(\frac12-\frac{1}{p})}\mathcal R_1^2(t)\mathcal R_2(t).
\end{equation}
\end{lemma}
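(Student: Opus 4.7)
The plan is to follow the same template as Lemmas \ref{le:5.4}--\ref{le:5.10}: express the Duhamel integral via Minkowski's inequality, split $f^{(2)}$ into its two cubic-in-$u$ constituents $|a||\nabla u|$ and $|a_0^{(1)}||u|$, push both into a single Morrey space, apply the interpolation bound Lemma \ref{le:2.3} for $S(t-s)$, and finish with a beta-function time integration. The key algebraic point is to identify a common target space for $f^{(2)}(u(s))$ in which the two summands sit with exactly the cubic norm $\|u\|_{M^{p,2}}^2\|\nabla u\|_{M^{2,2}}$.

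The input from the Coulomb gauge comes via Lemma \ref{le:4.1} with $q=2$: for $p\in(2,4)$ it yields $\|a(s)\|_{M^{2p/(4-p),2}(\mathbb R^n)}\lesssim \|u(s)\|_{M^{p,2}(\mathbb R^n)}^2$ and $\|a_0^{(1)}(s)\|_{M^{p,2}(\mathbb R^n)}\lesssim \|u(s)\|_{M^{p,2}(\mathbb R^n)}\|\nabla u(s)\|_{M^{2,2}(\mathbb R^n)}$ (the second one hinges on the constraint $q<2p/(4-p)$ that forces $p>2$, while the first needs $q>p/2$, forcing $p<4$). Using H\"older's inequality for Morrey spaces with a common scaling exponent $q=2$, i.e.\ $\|fg\|_{M^{s,2}}\le \|f\|_{M^{p_1,2}}\|g\|_{M^{p_2,2}}$ whenever $1/s=1/p_1+1/p_2$, one checks that both $|a||\nabla u|$ (with indices $2p/(4-p)$ and $2$) and $|a_0^{(1)}||u|$ (with indices $p$ and $p$) land in $M^{p/2,2}(\mathbb R^n)$ with
\[\|f^{(2)}(u(s))\|_{M^{p/2,2}(\mathbb R^n)}\lesssim \|u(s)\|_{M^{p,2}(\mathbb R^n)}^2\|\nabla u(s)\|_{M^{2,2}(\mathbb R^n)}\lesssim s^{\frac{2}{p}-\frac{3}{2}}\,\mathcal R_1^2(t)\mathcal R_2(t),\]
where the time-decay bookkeeping uses the definitions of $\mathcal R_1$ and $\mathcal R_2$.

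Next, Minkowski and Lemma \ref{le:2.3} applied with $(p_{\mathrm{in}},\widetilde p,q)=(p/2,p,2)$ give $\|S(t-s)f^{(2)}(u(s))\|_{M^{p,2}(\mathbb R^n)}\lesssim (t-s)^{-1/p}\|f^{(2)}(u(s))\|_{M^{p/2,2}(\mathbb R^n)}$, so rescaling $s=t\tau$ produces the factor $t^{1/p-1/2}\,\mathbb B[1/p,\,3/2-2/p]$ times $\mathcal R_1^2(t)\mathcal R_2(t)$, which is exactly the stated bound $t^{-(1/2-1/p)}\mathcal R_1^2(t)\mathcal R_2(t)$.

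The main obstacle, and the place that dictates the hypothesis $p\in(2,4)$, is compatibility of three separate constraints: (i) existence of the Coulomb-gauge decomposition of $a_0$ in Lemma \ref{le:4.1} demands $p>2$; (ii) the H\"older step needs $p/2<q=2$, i.e.\ $p<4$, so that $\|a\|_{M^{2p/(4-p),2}}$ makes sense; (iii) convergence of the beta integral $\mathbb B[1/p,3/2-2/p]$ requires $3/2-2/p<1$, i.e.\ again $p<4$. That these three conditions collapse to the single interval $(2,4)$ is what makes the argument close, and the verification that no other subcritical power of $s$ or $t-s$ sneaks in is the delicate bookkeeping one must be careful about; everything else is an orderly repetition of the $f^{(1)}$ estimates.
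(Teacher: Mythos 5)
Your proposal is correct and follows essentially the same route as the paper's own proof: Coulomb-gauge estimates from Lemma \ref{le:4.1} with $q=2$ placing $a$ in $M^{2p/(4-p),2}$ and $a_0^{(1)}$ in $M^{p,2}$, H\"older to land $f^{(2)}$ in $M^{p/2,2}$ with a cubic bound, Lemma \ref{le:2.3} to produce the factor $(t-s)^{-1/p}$, and the beta integral $\mathbb B[1/p,\,3/2-2/p]$ giving the power $t^{1/p-1/2}$. Your accounting of where the three occurrences of the constraint $p\in(2,4)$ originate is also accurate (and you correctly retain the $\mathcal R_2(t)$ factor, which the paper's final displayed line accidentally drops).
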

\begin{proof} It follows from Lemma \ref{le:4.1} that  $a(t)\in M^{\frac{2p}{4-p},2}(\mathbb R^n)$ and $a_0^{(1)}(t)\in M^{p,2}(\mathbb R^n)$. Moreover,
$$\Big\|a(t)\Big\|_{M^{\frac{2p}{4-p},2}(\mathbb R^n)}\lesssim \Big\|u(t)\Big\|_{M^{p,2}(\mathbb R^n)}^2,$$
and
$$\Big\|a_0^{(1)}(t)\Big\|_{M^{p,2}(\mathbb R^n)}\lesssim \Big\|u(t)\Big\|_{M^{p,2}(\mathbb R^n)}\Big\|\nabla u(t)\Big\|_{M^{2,2}(\mathbb R^n)}.$$
By H\"older's inequality, we have that $f^{(2)}(u)(t)\in M^{\frac{p}2,2}(\mathbb R^n)$ and
\begin{eqnarray}\label{f2-estimate}
\Big\|f^{(2)}(u)(t)\Big\|_{M^{\frac{p}2,2}(\mathbb R^n)}
&\lesssim& \Big\|a(t)\Big\|_{M^{\frac{2p}{4-p},2}(\mathbb R^n)}\Big\|\nabla u(t)\Big\|_{M^{2,2}(\mathbb R^n)}+\Big\|a_0^{(1)}(t)\Big\|_{M^{p,2}(\mathbb R^n)}
\Big\|u(t)\Big\|_{M^{p,2}(\mathbb R^n)}\nonumber\\
&\lesssim& \Big\|u(t)\Big\|_{M^{p,2}(\mathbb R^n)}^2\Big\|\nabla u(t)\Big\|_{M^{2,2}(\mathbb R^n)}.
\end{eqnarray}
Applying Minkowski's inequality and Lemma \ref{le:2.3}, we then have
\begin{eqnarray*}
&&\Big\|\int_0^tS(t-s) f^{(2)}(u(s))\,ds\Big\|_{M^{p,2}(\mathbb R^n)}\leq\int_0^t\Big\|S(t-s) f^{(2)}(u(s))\Big\|_{M^{p,2}(\mathbb R^n)}\,ds\\
&&\lesssim \int_0^t(t-s)^{-\frac{1}{p}}\Big\|f^{(2)}(u(s))\Big\|_{M^{\frac{p}2, 2}(\mathbb R^n)}\,ds\\
&&\lesssim \int_0^t(t-s)^{-\frac{1}{p}}\Big\|u(s)\Big\|_{M^{p,2}(\mathbb R^n)}^2\Big\|\nabla u(s)\Big\|_{M^{2,2}(\mathbb R^n)}\,ds\\
&&\lesssim \mathcal R_1^2(t)\mathcal R_2(t)\int_0^t(t-s)^{-\frac1{p}}s^{-(\frac32-\frac{2}{p})}\,ds\\
&&\lesssim  \mathbb B\Big[\frac1p, \frac32-\frac2{p}\Big] t^{-(\frac12-\frac{1}{p})}\mathcal R_1^2(t)\mathcal R_2(t) \\
&&\lesssim t^{-(\frac12-\frac{1}{p})}\mathcal R_1^2(t),
\end{eqnarray*}
since $\mathbb B\Big[\frac1p, \frac32-\frac2{p}\Big]<+\infty$ when $2<p<4$.
This implies (\ref{5.6}). Hence Lemma \ref{le:5.6} is proved.
\end{proof}

\begin{lemma}\label{le:5.9} For $p\in(2,4)$ and $u\in X_T^p$, we have that for any $0\le t\le T$,
\begin{equation}
\label{5.9}t^{\frac12}\Big\|\nabla\int_0^tS(t-s) f^{(2)}(u(s))\,ds\Big\|_{M^{2,2}(\mathbb R^n)}\lesssim \mathcal R_1^2(t)\mathcal R_2(t).
\end{equation}
\end{lemma}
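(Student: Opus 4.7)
The plan is to run the same Duhamel-plus-Minkowski scheme used in Lemma \ref{le:5.6} and Lemma \ref{lem5.7}, but now with the gradient hitting the semigroup (so we invoke Lemma \ref{le:5.3} instead of Lemma \ref{le:2.3}) and with the target space $M^{2,2}(\mathbb R^n)$. The key input is the pointwise-in-time bound on the nonlinearity that was already derived inside the proof of Lemma \ref{le:5.6}, namely the Morrey product estimate
\begin{equation*}
\big\|f^{(2)}(u)(s)\big\|_{M^{\frac{p}{2},2}(\mathbb R^n)}\lesssim \big\|u(s)\big\|_{M^{p,2}(\mathbb R^n)}^{2}\big\|\nabla u(s)\big\|_{M^{2,2}(\mathbb R^n)},
\end{equation*}
which follows from $|f^{(2)}(u)|\lesssim|a||\nabla u|+|a_0^{(1)}||u|$, Hölder's inequality in Morrey spaces, and the control of $a(s)$ and $a_0^{(1)}(s)$ given in Lemma \ref{le:4.1}.

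Next, I would bring the gradient and the Morrey norm under the time integral by Minkowski's inequality, and then estimate each factor $\nabla(S(t-s)f^{(2)}(u(s)))$ in $M^{2,2}$ via the interpolation inequality Lemma \ref{le:5.3} applied from the source exponent $p/2$ to the target exponent $2$, with $q=2$. The admissibility conditions $p/2\le 2\le (p/2)(n+1)$ are satisfied for all $p\in(2,4)$ and $n\ge 2$. The resulting time factor is
\begin{equation*}
(t-s)^{-\frac12-\frac{q}{2}\bigl(\frac{2}{p}-\frac12\bigr)}=(t-s)^{-\frac{2}{p}}.
\end{equation*}

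Combining with the $X_T^p$-bounds $\|u(s)\|_{M^{p,2}}^{2}\lesssim \mathcal R_1(t)^{2}\, s^{-(1-\frac{2}{p})}$ and $\|\nabla u(s)\|_{M^{2,2}}\lesssim \mathcal R_2(t)\, s^{-\frac12}$ gives an integrand of the form $(t-s)^{-\frac{2}{p}}s^{-(\frac32-\frac{2}{p})}$. This produces a Beta-type integral
\begin{equation*}
\int_0^t (t-s)^{-\frac{2}{p}}s^{-(\frac32-\frac{2}{p})}\,ds=\mathbb B\bigl[\tfrac{2}{p},\,\tfrac32-\tfrac{2}{p}\bigr]\,t^{-\frac12},
\end{equation*}
whose coefficient is finite precisely when $\tfrac{2}{p}<1$ and $\tfrac32-\tfrac{2}{p}<1$, that is, when $p\in(2,4)$. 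Multiplying by $t^{1/2}$ then yields the claimed bound $\lesssim \mathcal R_1(t)^{2}\mathcal R_2(t)$.

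No genuine obstacle is expected; the only delicate point is verifying that the interpolation exponent $\widetilde p=2$ is indeed in the admissible range $[p/2,(p/2)(n+1)]$ required by Lemma \ref{le:5.3}, and that the time exponents land exactly on the Beta-integrability window $(2,4)$ dictated by $p$. Both are clean checks, and everything else is a direct transcription of the argument in Lemma \ref{le:5.6} with the extra derivative absorbed through Lemma \ref{le:5.3}.
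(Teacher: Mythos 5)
Your proposal matches the paper's proof essentially line for line: Minkowski's inequality, the pointwise Morrey bound $\|f^{(2)}(u(s))\|_{M^{p/2,2}}\lesssim\|u(s)\|_{M^{p,2}}^2\|\nabla u(s)\|_{M^{2,2}}$ from inside Lemma~\ref{le:5.6}, the gradient-semigroup interpolation estimate of Lemma~\ref{le:5.3} from $M^{p/2,2}$ to $M^{2,2}$, and the identical Beta integral $\mathbb B[\tfrac2p,\tfrac32-\tfrac2p]\,t^{-1/2}$ with the same $p\in(2,4)$ admissibility window. (The paper's printed line reads $M^{p,2}$ in the penultimate norm, but its time exponent $(t-s)^{-1/2-(2/p-1/2)}$ is only consistent with the source space $M^{p/2,2}$ that you correctly use, so that is a typo in the paper rather than a discrepancy in the argument.)
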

\begin{proof} Applying Minkowski's inequality, (\ref{f2-estimate}), and Lemma 2.6, we have
\begin{eqnarray*}
&&\Big\|\nabla\int_0^tS(t-s) f^{(2)}(u(s))\,ds\Big\|_{M^{2,2}(\mathbb R^n)}
\lesssim\int_0^t\Big\|\nabla(S(t-s)f^{(2)}(u(s)))\Big\|_{M^{2,2}(\mathbb R^n)}\,ds\\
&&\lesssim\int_0^t(t-s)^{-\frac12-(\frac{2}{p}-\frac{1}{2})}\Big\|f^{(2)}(u(s))\Big\|_{M^{p,2}(\mathbb R^n)}\,ds\\
&&\lesssim\int_0^t(t-s)^{-\frac{2}{p}}\big\|u(s)\big\|_{M^{p,2}(\mathbb R^n)}^2\big\|\nabla u(s)\big\|_{M^{2,2}(\mathbb R^n)}\,ds\\
&&\lesssim \mathcal R_1^2(t)\mathcal R_2(t)\int_0^t(t-s)^{-\frac{2}{p}}s^{-2(\frac12-\frac{1}{p})-\frac12}\,ds\\
&&\lesssim \mathbb B\Big[\frac{2}{p}, \frac32-\frac{2}{p}\Big] t^{-\frac12}\mathcal R_1^2(t)\mathcal R_2(t)\\
&&\lesssim t^{-\frac12}\mathcal R_1^2(t)\mathcal R_2(t),
\end{eqnarray*}
since $\mathbb B\Big[\frac{2}{p}, \frac32-\frac{2}{p}\Big]<+\infty$ when $2<p<4.$
This proves Lemma \ref{le:5.9}.
\end{proof}
\begin{lemma}\label{le:5.12} For $p\in(2,4)$ and $u\in X_T^p,$ we have that for $0\le t\le T$,
\begin{equation}\label{5.12}
\Big\|\int_0^tS(t-s) f^{(2)}(u(s))\,ds\Big\|_{M^{2,2}(\mathbb R^n)}\lesssim \mathcal R_1^2(t)\mathcal R_2(t).
\end{equation}
\end{lemma}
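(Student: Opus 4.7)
The plan is to follow the template of Lemma \ref{le:5.10} (the analogous estimate for $f^{(1)}$), combining Minkowski's integral inequality with the interpolation estimate of Lemma \ref{le:2.3} and the pointwise Morrey bound on $f^{(2)}(u)$ established in the proof of Lemma \ref{le:5.6}.

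The key input, already recorded in (\ref{f2-estimate}) during the proof of Lemma \ref{le:5.6}, is that under the Coulomb gauge
\[
\bigl\|f^{(2)}(u)(s)\bigr\|_{M^{\frac{p}{2},2}(\mathbb R^n)}
\lesssim \bigl\|u(s)\bigr\|_{M^{p,2}(\mathbb R^n)}^{2}\,\bigl\|\nabla u(s)\bigr\|_{M^{2,2}(\mathbb R^n)}.
\]
With this in hand, I apply Minkowski's inequality and then Lemma \ref{le:2.3} with source space $M^{\frac{p}{2},2}$ and target space $M^{2,2}$. The hypothesis $p\in(2,4)$ guarantees $\frac{p}{2}\le 2$, and one checks $2\le \frac{p}{2}(n+1)$ since $n\ge 2$; Lemma \ref{le:2.3} therefore yields the smoothing factor $(t-s)^{-\frac{2}{2}(\frac{2}{p}-\frac12)}=(t-s)^{\frac12-\frac{2}{p}}$.

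Using the definitions of $\mathcal R_1$ and $\mathcal R_2$, I bound
\[
\bigl\|u(s)\bigr\|_{M^{p,2}(\mathbb R^n)}^{2}\bigl\|\nabla u(s)\bigr\|_{M^{2,2}(\mathbb R^n)}
\le s^{-2(\frac12-\frac{1}{p})}\,s^{-\frac12}\,\mathcal R_1^{2}(t)\,\mathcal R_2(t)
= s^{\frac{2}{p}-\frac{3}{2}}\,\mathcal R_1^{2}(t)\,\mathcal R_2(t),
\]
so after substituting, the estimate reduces to controlling the scalar integral
\[
\int_0^t (t-s)^{\frac12-\frac{2}{p}}\, s^{\frac{2}{p}-\frac{3}{2}}\,ds
= \mathbb B\!\left[\tfrac{2}{p}-\tfrac12,\;\tfrac{3}{2}-\tfrac{2}{p}\right].
\]
A quick check of the exponents: the total power of $t$ after rescaling $s=t\sigma$ is $(\frac12-\frac{2}{p})+(\frac{2}{p}-\frac32)+1=0$, so no $t$-weight appears, which matches the statement. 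Both of the exponents $\tfrac{2}{p}-\tfrac12$ and $\tfrac{3}{2}-\tfrac{2}{p}$ are strictly less than $1$ precisely on the range $p\in(2,4)$, so the Beta integral is finite and we obtain $\mathcal R_1^{2}(t)\mathcal R_2(t)$ on the right-hand side.

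There is no substantive obstacle: the mechanism is identical to Lemmas \ref{le:5.6} and \ref{le:5.10}, only the target exponent is $2$ rather than $p$. The only point requiring care is bookkeeping of the Beta exponents, and the window $p\in(2,4)$ stated in the hypothesis is exactly the one that keeps both endpoints integrable. Unlike Lemma \ref{le:5.9}, no gradient of the semigroup is taken, so Lemma \ref{le:2.3} (rather than Lemma \ref{le:5.3}) is the relevant tool.
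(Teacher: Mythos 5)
Your proof is correct and follows the same route as the paper: Minkowski's inequality, the bound (\ref{f2-estimate}) for $f^{(2)}$ in $M^{p/2,2}$, and the interpolation Lemma~\ref{le:2.3} (the paper's own proof cites ``Lemma 2.6,'' but since no gradient is taken this appears to be a typo for Lemma~\ref{le:2.3}, which is exactly the tool you identify). One tiny inaccuracy: the Beta integral $\mathbb B[\frac{2}{p}-\frac12,\,\frac{3}{2}-\frac{2}{p}]$ is actually finite for $p\in(\frac43,4)$, so the window $p\in(2,4)$ in the hypothesis is sufficient rather than necessary for integrability (the lower bound $p>2$ comes instead from the constraints in Lemmas~\ref{le:2.3} and~\ref{le:4.1}), but this does not affect the correctness of your argument.
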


\begin{proof} Applying Minkowski's inequality, (\ref{f2-estimate}), and Lemma 2.6, we have
\begin{eqnarray*}
&&\Big\|\int_0^tS(t-s) f^{(2)}(u(s))\,ds\Big\|_{M^{2,2}(\mathbb R^n)}
\lesssim\int_0^t\Big\|S(t-s)f^{(2)}(u(s))\Big\|_{M^{2,2}(\mathbb R^n)}\,ds\\
&&\lesssim\int_0^t (t-s)^{-(\frac{2}{p}-\frac{1}{2})}\Big\|f^{(2)}(u(s))\Big\|_{M^{\frac{p}2,2}(\mathbb R^n)}\,ds\\
&&\lesssim\int_0^t(t-s)^{-(\frac{2}{p}-\frac{1}{2})}\Big\|u(s)\Big\|_{M^{p,2}(\mathbb R^n)}^2\Big\|\nabla u(s)\Big\|_{M^{2,2}(\mathbb R^n)}\,ds\\
&&\lesssim\int_0^t(t-s)^{-(\frac{2}{p}-\frac{1}{2})}\big\|u(s)\big\|_{M^{p,2}(\mathbb R^n)}^2\big\|\nabla u(s)\big\|_{M^{2,2}(\mathbb R^n)}\,ds\\
&&\lesssim \mathcal R_1^2(t)\mathcal R_2(t)\int_0^t(t-s)^{-(\frac{2}{p}-\frac{1}{2})}s^{-(\frac32-\frac{2}{p})}\,ds
=\mathbb B\Big[\frac{2}{p}-\frac12, \frac32-\frac{2}{p}\Big]\mathcal R_1^2(t)\mathcal R_2(t)\\
&&\lesssim \mathcal R_1^2(t)\mathcal R_2(t),
\end{eqnarray*}
since $\mathbb B\Big[\frac{2}{p}-\frac12, \frac32-\frac{2}{p}\Big]<+\infty$ when  $2<p<4$.
This proves Lemma \ref{le:5.12}.
\end{proof}

\medskip
\noindent{4.1.3}.  {\bf Morrey space estimates related to $f^{(3)}(u)$}. Observe that
$$\big|f^{(3)}(u)\big|\lesssim |a|^2|u|+|a_0^{(2)}|u|.$$
Now we have
\begin{lemma}\label{le:5.5} For $p\in(2,\frac{10}{3})$ and $u\in X_T^p,$ we have that for $0\le t\le T$,
\begin{equation}\label{5.5}
t^{\frac12-\frac{1}{p}}\Big\|\int_0^tS(t-s) f^{(3)}(u(s))\,ds\Big\|_{M^{p,2}(\mathbb R^n)}\lesssim \mathcal R_1^5(t).
\end{equation}
\end{lemma}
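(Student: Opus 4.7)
The plan is to mirror the strategy used for $f^{(1)}$ and $f^{(2)}$ in Lemmas~\ref{le:5.4} and \ref{le:5.6}: estimate $f^{(3)}(u(s))$ in a sub-$p$ Morrey space $M^{r,2}$ with $r<p$, then use the Interpolation Lemma~\ref{le:2.3} on the semigroup $S(t-s)$ to recover the target $M^{p,2}$ at the cost of a power of $(t-s)$, and finally evaluate the time integral via a beta function.

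To implement this, I would first invoke Lemma~\ref{le:4.1} with $q=2$; since $p\in(2,\tfrac{10}{3})$ satisfies $p>2$ and $2<\tfrac{2p}{4-p}$, the hypotheses are met with $\widetilde p=\tfrac{2p}{4-p}$, so that
$$\|a(t)\|_{M^{\widetilde p,2}(\mathbb R^n)}\lesssim\|u(t)\|_{M^{p,2}(\mathbb R^n)}^{2},\qquad
\|a_0^{(2)}(t)\|_{M^{\widetilde p/2,2}(\mathbb R^n)}\lesssim\|u(t)\|_{M^{p,2}(\mathbb R^n)}^{4}.$$
Applying Hölder's inequality in Morrey spaces to the two pieces of $|f^{(3)}(u)|\lesssim|a|^2|u|+|a_0^{(2)}||u|$ then gives
$$\|f^{(3)}(u(s))\|_{M^{r,2}(\mathbb R^n)}\lesssim\|u(s)\|_{M^{p,2}(\mathbb R^n)}^{5},\qquad \frac{1}{r}=\frac{2}{\widetilde p}+\frac{1}{p}=\frac{5-p}{p},$$
so in particular $r=\tfrac{p}{5-p}<p$.

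Next, Lemma~\ref{le:2.3} applied with source index $r$ and target index $p$ (the admissibility condition $r\le p\le r(n+1)$ reduces to $p\le 4$ and $n\ge 2$) yields
$$\|S(t-s)f^{(3)}(u(s))\|_{M^{p,2}(\mathbb R^n)}\lesssim (t-s)^{-(4-p)/p}\|u(s)\|_{M^{p,2}(\mathbb R^n)}^{5}.$$
Combining with Minkowski's inequality and the bound $\|u(s)\|_{M^{p,2}(\mathbb R^n)}\le s^{-(1/2-1/p)}\mathcal R_1(t)$, the proof reduces to the identity
$$\int_0^{t}(t-s)^{-(4-p)/p}\,s^{-(5/2-5/p)}\,ds=t^{1/p-1/2}\,\mathbb B\!\left[\tfrac{4-p}{p},\,\tfrac{5}{2}-\tfrac{5}{p}\right],$$
which after multiplication by the prefactor $t^{1/2-1/p}$ produces $\mathcal R_1^{5}(t)$.

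The only delicate point, and the one that pins down the range of $p$, is the finiteness of the beta integral: the first exponent is less than $1$ precisely when $p>2$, while the second is less than $1$ precisely when $p<\tfrac{10}{3}$. This is the main (mild) obstacle; everything else is a routine assembly of the Morrey-space building blocks developed in Sections~2 and 3.
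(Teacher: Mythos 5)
Your argument matches the paper's own proof step for step: apply Lemma~\ref{le:4.1} with $q=2$ to place $a(t)$ in $M^{\frac{2p}{4-p},2}$ and $a_0^{(2)}(t)$ in $M^{\frac{p}{4-p},2}$, use H\"older to land $f^{(3)}(u)$ in $M^{\frac{p}{5-p},2}$ with a fifth-power bound, then apply Minkowski, the interpolation Lemma~\ref{le:2.3}, and the beta integral $\mathbb B\big[\tfrac{4-p}{p},\tfrac52-\tfrac5p\big]$, whose finiteness pins down $2<p<\tfrac{10}{3}$. The reasoning is correct and takes essentially the same route as the paper.
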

\begin{proof} By Lemma 3.2, we know that $a(t)\in M^{\frac{2p}{4-p},2}(\mathbb R^n)$ and $a_0^{(2)}(t)\in M^{\frac{p}{4-p}, 2}(\mathbb R^n)$
and
$$\big\|a(t)\big\|_{M^{\frac{2p}{4-p},2}(\mathbb R^n)}\lesssim\big\|u(t)\big\|_{M^{p,2}(\mathbb R^n)}^2,$$
$$\big\|a_0^{(2)}(t)\big\|_{M^{\frac{p}{4-p},2}(\mathbb R^n)}\lesssim\big\|u(t)\big\|_{M^{p,2}(\mathbb R^n)}^4.$$
Thus by H\"older's inequality we obtain that $f^{(3)}(u)\in M^{\frac{p}{5-p},2}(\mathbb R^n)$ and
\begin{eqnarray}\label{f3-estimate}
\Big\|f^{(3)}(u(t))\Big\|_{ M^{\frac{p}{5-p},2}(\mathbb R^n)}
&\lesssim& \Big(\Big\||a(t)|^2\Big\|_{M^{\frac{p}{4-p},2}(\mathbb R^n)}+\Big\|a_0^{(2)}(t)\Big\|_{M^{\frac{p}{4-p},2}(\mathbb R^n)}\Big)
\Big\|u(t)\Big\|_{M^{p,2}(\mathbb R^n)}\nonumber\\
&\lesssim& \Big\|u(t)\Big\|_{M^{p,2}(\mathbb R^n)}^5.
\end{eqnarray}
Applying Minkowski's inequality, (\ref{f3-estimate}), and Lemma 2.3, we have
\begin{eqnarray*}
&&\Big\|\int_0^tS(t-s)f^{(3)}(u(s))\,ds\Big\|_{M^{p,2}(\mathbb R^n)}\lesssim
\int_0^t\Big\|S(t-s)f^{(3)}(u(s))\Big\|_{M^{p,2}(\mathbb R^n)}\,ds\\
&&\lesssim\int_0^t(t-s)^{-\frac{4-p}{p}}\Big\|f^{(3)}(u(s))\Big\|_{M^{\frac{p}{5-p},2}(\mathbb R^n)}\,ds
\lesssim\int_0^t(t-s)^{-\frac{4-p}{p}}\Big\|u\Big\|_{M^{p,2}(\mathbb R^n)}^5\,ds\\
&&\lesssim \mathcal R_1^5(t)\int_0^t(t-s)^{-\frac{4-p}{p}}s^{-5(\frac12-\frac{1}{p})}\,ds\\
&&\lesssim \mathbb B\big[\frac{4-p}{p}, \frac52-\frac{5}{p}\big] t^{-(\frac12-\frac{1}{p})}\mathcal R_1^5(t)\\
&&\lesssim t^{-(\frac12-\frac{1}{p})}\mathcal R_1^5(t),
\end{eqnarray*}
since $\mathbb B\big[\frac{4-p}{p}, \frac52-\frac{5}{p}\big]<+\infty$
when  $2<p<\frac{10}{3}$.
This completes the proof.
\end{proof}

\begin{lemma}\label{le:5.8} For $p\in(\frac52,\frac{10}{3})$ and $u\in X_T^p$, we have that for $0\le t\le T$,
\begin{equation}
\label{5.8}t^{\frac12}\Big\|\nabla\int_0^tS(t-s)f^{(3)}(u(s))\,ds\Big\|_{M^{2,2}(\mathbb R^n)}\lesssim \mathcal R_1^5(t).
\end{equation}
\end{lemma}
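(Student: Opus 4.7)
The plan is to follow the template set by Lemma \ref{le:5.9} (the analogous gradient-in-$M^{2,2}$ estimate for $f^{(2)}$) and Lemma \ref{le:5.5} (the $M^{p,2}$ estimate for $f^{(3)}$). The only new ingredient is combining the pointwise bound $|f^{(3)}(u)|\lesssim |a|^2|u|+|a_0^{(2)}||u|$, the Morrey bounds for $a(t)$ and $a_0^{(2)}(t)$ from Lemma \ref{le:4.1}, and the gradient interpolation estimate in Lemma \ref{le:5.3} with an index pair chosen so that the landing space is $M^{2,2}(\mathbb R^n)$.

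First I would apply Minkowski's inequality to move the Morrey norm inside the time integral, yielding
\[
\Big\|\nabla\int_0^t S(t-s)f^{(3)}(u(s))\,ds\Big\|_{M^{2,2}(\mathbb R^n)}
\lesssim\int_0^t\Big\|\nabla\bigl(S(t-s)f^{(3)}(u(s))\bigr)\Big\|_{M^{2,2}(\mathbb R^n)}\,ds.
\]
Then I would invoke the estimate (\ref{f3-estimate}) derived in the proof of Lemma \ref{le:5.5}, namely $\|f^{(3)}(u(s))\|_{M^{p/(5-p),2}(\mathbb R^n)}\lesssim \|u(s)\|_{M^{p,2}(\mathbb R^n)}^5$, which requires $p/(5-p)\le 2$, i.e.\ $p\le 10/3$. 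Applied to the input space $M^{p/(5-p),2}$ and output space $M^{2,2}$, Lemma \ref{le:5.3} with $q=2$ gives the decay factor
\[
(t-s)^{-\frac12-\bigl(\frac{5-p}{p}-\frac12\bigr)}=(t-s)^{-\frac{5-p}{p}}.
\]

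Combining these, the integrand is controlled by $(t-s)^{-(5-p)/p}\|u(s)\|_{M^{p,2}}^5$, and using the definition of $\mathcal R_1$ to bound $\|u(s)\|_{M^{p,2}}^5\le s^{-5(\frac12-\frac1p)}\mathcal R_1^5(t)$, the remaining scalar integral is
\[
\int_0^t (t-s)^{-\frac{5-p}{p}}\,s^{-5(\frac12-\frac1p)}\,ds
=\mathbb B\Big[\tfrac{5-p}{p},\,\tfrac52-\tfrac5p\Big]\,t^{-1/2},
\]
after noting that the exponent arithmetic $1-\frac{5-p}{p}-(\frac52-\frac5p)=-\frac12$ is exactly what produces the factor $t^{-1/2}$ that cancels against the prefactor $t^{1/2}$ in (\ref{5.8}).

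The only checkpoint is that the Beta integral is finite, which requires both $\frac{5-p}{p}<1$ and $\frac52-\frac5p<1$; these are equivalent to $p>5/2$ and $p<10/3$ respectively, which is precisely the range specified in the hypothesis. I do not expect any genuine obstacle: each piece (Minkowski, the algebraic bound (\ref{f3-estimate}), and Lemma \ref{le:5.3}) is already available, and the delicate point is simply bookkeeping the Morrey exponents so that the Beta function converges at both endpoints—this is exactly what forces the tighter interval $(5/2,10/3)$ compared with the range $(2,10/3)$ of Lemma \ref{le:5.5}.
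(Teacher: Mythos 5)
Your proposal is correct and follows essentially the same route as the paper's proof: Minkowski's inequality, the pointwise bound $\|f^{(3)}(u(s))\|_{M^{p/(5-p),2}}\lesssim\|u(s)\|_{M^{p,2}}^5$ from the proof of Lemma~\ref{le:5.5}, the gradient interpolation Lemma~\ref{le:5.3} giving the kernel exponent $-\frac{5-p}{p}$, and the Beta-function bookkeeping that yields $t^{-1/2}$ and forces $p\in(\tfrac52,\tfrac{10}{3})$. Your observation that the extra $\frac12$ from taking a gradient is precisely what tightens the lower endpoint from $2$ to $\tfrac52$ is also the right diagnosis.
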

\begin{proof} Applying Minkowski's inequality, (\ref{f3-estimate}), and Lemma 2.6, we have
\begin{eqnarray*}
&&\Big\|\nabla\int_0^tS(t-s)f^{(3)}(u(s)) \,ds\Big\|_{M^{2,2}(\mathbb R^n)}\lesssim\int_0^t\Big\|\nabla(S(t-s)f^{(3)}(u(s)))\Big\|_{M^{2,2}(\mathbb R^n)}\,ds\\
&&\lesssim\int_0^t(t-s)^{-\frac12-(\frac{5-p}{p}-\frac{1}{2})}\Big\|f^{(3)}(u(s))\Big\|_{M^{\frac{p}{5-p},2}(\mathbb R^n)}\,ds\\
&&\lesssim\int_0^t(t-s)^{-\frac{5-p}{p}}s^{5(\frac12-\frac{1}{p})}s^{-5(\frac12-\frac{1}{p})}\Big\|u(s)\Big\|_{M^{p,2}(\mathbb R^n)}^5\,ds\\
&&\lesssim\mathbb B\big[\frac{5-p}{p}, \frac{5}{2}-\frac{5}{p}\big] t^{-\frac12} \mathcal R_1^5(t)\\
&&\lesssim t^{-\frac12}\mathcal R_1^5(t) ,
\end{eqnarray*}
since $\mathbb B\big[\frac{5-p}{p}, \frac{5}{2}-\frac{5}{p}\big]<+\infty$ when $\frac52<p<\frac{10}{3}$.
Hence Lemma \ref{le:5.8} is proved.
\end{proof}

\begin{lemma}\label{le:5.11} For $p\in(2,\frac{10}{3})$ and $u\in X_T^p,$ we have that for $0\le t\le T$,
\begin{equation}\label{5.11}
\Big\|\int_0^tS(t-s)f^{(3)}(u(s))\,ds\Big\|_{M^{2,2}(\mathbb R^n)}\lesssim \mathcal R_1^5(t).
\end{equation}
\end{lemma}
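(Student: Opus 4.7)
The plan is to follow exactly the same pattern used in Lemmas \ref{le:5.5} and \ref{le:5.10}, combining Minkowski's inequality with the interpolation estimate for $S(t)$ from Lemma \ref{le:2.3} and the pointwise bound on $f^{(3)}$ derived in the proof of Lemma \ref{le:5.5}.

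The first step is to recall from the proof of Lemma \ref{le:5.5} that $f^{(3)}(u)(t)\in M^{\frac{p}{5-p},2}(\mathbb R^n)$ with
\[
\Big\|f^{(3)}(u(s))\Big\|_{M^{\frac{p}{5-p},2}(\mathbb R^n)}\lesssim \Big\|u(s)\Big\|_{M^{p,2}(\mathbb R^n)}^5.
\]
This is the content of estimate (\ref{f3-estimate}) and relies only on Lemma \ref{le:4.1} together with H\"older's inequality in Morrey spaces.

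Next I apply Minkowski's inequality to commute the Morrey norm with the time integral, and then apply the interpolation estimate of Lemma \ref{le:2.3} with source exponent $\frac{p}{5-p}$ and target exponent $2$, which is admissible precisely when $\frac{p}{5-p}\le 2$, i.e.\ $p\le \frac{10}{3}$. This yields
\[
\Big\|S(t-s)f^{(3)}(u(s))\Big\|_{M^{2,2}(\mathbb R^n)}\lesssim (t-s)^{-(\frac{5-p}{p}-\frac12)}\Big\|u(s)\Big\|_{M^{p,2}(\mathbb R^n)}^5.
\]
Using the definition of $\mathcal R_1(t)$, we bound $\|u(s)\|_{M^{p,2}(\mathbb R^n)}^5\le s^{-5(\frac12-\frac1p)}\mathcal R_1^5(t)$.

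The remaining step, which is the main place where the parameter range enters, is to observe that the resulting time integral is, after the substitution $s=t\sigma$, a Beta integral whose $t$-power vanishes:
\[
\int_0^t (t-s)^{-\frac{10-3p}{2p}}\, s^{-\frac{5(p-2)}{2p}}\, ds
= \mathbb B\!\left[\frac{10-3p}{2p},\,\frac{5(p-2)}{2p}\right].
\]
Both exponents are strictly less than $1$ exactly when $p>2$ and $p<\frac{10}{3}$, so the Beta integral is finite in the stated range. Combining these estimates gives the claimed bound by $\mathcal R_1^5(t)$. The only real subtlety is keeping the exponents straight and verifying that both endpoints of the integrability condition on the Beta function match the stated interval $(2,\frac{10}{3})$; no new ingredient beyond those used in Lemmas \ref{le:5.5} and \ref{le:5.10} is required.
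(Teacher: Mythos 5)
Your proof is correct and matches the paper's argument essentially line for line: both invoke estimate (\ref{f3-estimate}), then Minkowski's inequality together with the interpolation estimate of Lemma \ref{le:2.3} from $M^{\frac{p}{5-p},2}$ to $M^{2,2}$, and finally reduce to the Beta integral $\mathbb B\big[\frac{5}{p}-\frac32,\ \frac52-\frac5p\big]$, whose finiteness is exactly the condition $2<p<\frac{10}{3}$. The exponent bookkeeping (including the verification that the leftover power of $t$ vanishes) is the same as in the paper.
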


\begin{proof}  Applying Minkowski's inequality, (\ref{f3-estimate}), and Lemma 2.6, we have
\begin{eqnarray*}
&&\Big\|\int_0^tS(t-s)f^{(3)}(u(s))\,ds\Big\|_{M^{2,2}(\mathbb R^n)}\lesssim\int_0^t\big\|S(t-s)f^{(3)}(u(s))\big\|_{M^{2,2}(\mathbb R^n)}\,ds\\
&&\lesssim\int_0^t(t-s)^{-(\frac{5-p}{p}-\frac{1}{2})}\big\|f^{(3)}(u(s))\big\|_{M^{\frac{p}{5-p},2}(\mathbb R^n)}\,ds\\
&&\lesssim\int_0^t(t-s)^{-(\frac{5-p}{p}-\frac{1}{2})}s^{5(\frac12-\frac{1}{p})}s^{-5(\frac12-\frac{1}{p})}\big\|u(s)\big\|_{M^{p,2}(\mathbb R^n)}^5\,ds\\
&&\lesssim \mathbb B\big [\frac{5}{p}-\frac32, \frac52-\frac{5}{p}\big]\mathcal R_1^5(t)\\
&&\lesssim \mathcal R_1^5(t),
\end{eqnarray*}
since $\mathbb B\big [\frac{5}{p}-\frac32, \frac52-\frac{5}{p}\big]<+\infty$ when $2<p<\frac{10}{3}$.
This proves Lemma \ref{le:5.11}.
\end{proof}
\subsection{Duhamel's principle and Morrey space estimates of solutions to (\ref{3.2})} Assume that
$$u\in C^0\big([0,T], H^\infty(\mathbb R^n, \mathbb C^n)\big)$$
solves the covariant complex Ginzburg-Landau equation (\ref{3.2}). By Duhamel's formula, we have that for $0\le t\le T$,
\begin{equation}\label{duhamel1}
u(t)=S(t)u_0+\sum_{l=1}^3 (S*f^{(l)}(u))(t),
\end{equation}
where
\begin{equation}\label{duhamel2}
S(t)u_0=S_t*u_0:=\widetilde{u_0}(t)
\end{equation}
is, as before, the convolution in space, and
\begin{equation}\label{duhamel3}
 (S*f^{(l)}(u))(t)=\int_0^t S(t-s)(f^{(l)}(u(s)))\,ds:=u^{(l)}(t), \ 1\le l\le 3.
\end{equation}

For $\widetilde{u_0}$, we can apply Lemma 2.2, Lemma 2.3 and Lemma 2.6 to get
\begin{lemma}\label{x-estimate} For any $2<p<+\infty$, there exists $C>0$ depending only on $n, p, \lambda$
such that for any $0\le t<+\infty$,
\begin{equation}\label{x-estimate1}
\Big\|\widetilde{u_0}\Big\|_{X_t^p}\le C \Big\|u_0\Big\|_{M^{2,2}(\mathbb R^n)}.
\end{equation}
\end{lemma}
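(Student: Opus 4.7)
The plan is to exploit the purely linear nature of $\widetilde{u_0}(t)=S(t)u_0$: each of the three pieces comprising the $X^p_t$-norm in (\ref{x-space}) matches, exponent by exponent, one of the linear semigroup estimates established in Section 2, applied with initial datum in $M^{2,2}(\mathbb R^n)$. Accordingly I would bound each summand separately; no nonlinear quantities enter and no interpolation in the time variable is required.

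First, for the unweighted $M^{2,2}$ piece $\sup_t \|S(t)u_0\|_{M^{2,2}(\mathbb R^n)}$, Lemma \ref{le:2.2} with $p=q=2$ yields the trivial bound $\|S(t)u_0\|_{M^{2,2}(\mathbb R^n)}\le\|u_0\|_{M^{2,2}(\mathbb R^n)}$ uniformly in $t\ge 0$. Second, for the weighted piece $\sup_t t^{1/2-1/p}\|S(t)u_0\|_{M^{p,2}(\mathbb R^n)}$, I would apply the interpolation Lemma \ref{le:2.3} with source exponent $p_0=2$, target exponent $\widetilde p=p$, and $q=2$, obtaining
$$\|S(t)u_0\|_{M^{p,2}(\mathbb R^n)}\lesssim t^{-(1/2-1/p)}\|u_0\|_{M^{2,2}(\mathbb R^n)},$$
so the weight $t^{1/2-1/p}$ exactly cancels the singular factor. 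Third, for the gradient piece $\sup_t t^{1/2}\|\nabla S(t)u_0\|_{M^{2,2}(\mathbb R^n)}$, I would apply Lemma \ref{le:5.2} with $p=q=2$ to get $\|\nabla S(t)u_0\|_{M^{2,2}(\mathbb R^n)}\lesssim t^{-1/2}\|u_0\|_{M^{2,2}(\mathbb R^n)}$, and again the weight $t^{1/2}$ absorbs the singularity. Summing the three bounds gives (\ref{x-estimate1}).

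There is essentially no obstacle here: this is a bookkeeping verification that each weight appearing in (\ref{x-space}) was engineered to match the time decay of the corresponding linear semigroup estimate. The only fine print concerns the admissible range of $\widetilde p$ in Lemma \ref{le:2.3}, which requires $\widetilde p\le p_0(n+1)=2(n+1)$; for $n\ge 2$ this covers all $p<6$, hence all exponents that arise in the nonlinear estimates of Section 4 (where $p\in(2,10/3)$). If one wished to cover arbitrary $p<+\infty$ one would iterate by writing $S(t)=S(t/2)\circ S(t/2)$ and combining Lemma \ref{le:2.1} on the first factor with Lemma \ref{le:2.3} on the second, but this is unnecessary for the applications to Theorem \ref{th:1.1}. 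The constant $C=C(n,p,\lambda)$ picks up $\lambda$-dependence only through the absolute constants $\int_{\mathbb R^n}|S|\,dy$ and $\int_{\mathbb R^n}|\nabla S|\,dy$ of the kernel profile associated with $(\lambda-i)\Delta$.
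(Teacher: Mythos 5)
Your proof matches the paper's exactly: each of the three components of the $X_t^p$-norm is bounded by the appropriate linear estimate from Section 2 — Lemma \ref{le:2.3} for the weighted $M^{p,2}$ piece, the gradient interpolation estimate for the weighted $M^{2,2}$ piece, and Lemma \ref{le:2.2} for the unweighted piece — with the weights engineered to absorb each singular power of $t$. Your remark about the restriction $\widetilde p\le 2(n+1)$ in Lemma \ref{le:2.3} is a valid point the paper glosses over; since all applications use $p\in(3,\frac{10}{3})$, the restriction is harmless, and your suggested semigroup splitting $S(t)=S(t/2)\circ S(t/2)$ would indeed extend the range if one cared to do so.
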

\begin{proof} By Lemma 2.3, we have that for $0\le t\le T$,
\begin{eqnarray*}
\Big\|\widetilde{u_0}(t)\Big\|_{M^{p,2}(\mathbb R^n)}&=&\Big\|S(t) u_0\Big\|_{M^{p,2}(\mathbb R^n)}
\lesssim  t^{-(\frac12-\frac1{p})}\Big\|u_0\Big\|_{M^{2,2}(\mathbb R^n)}.
\end{eqnarray*}
By Lemma 2.6, we have that for $0\le t\le T$,
\begin{eqnarray*}
\Big\|\nabla\widetilde{u_0}(t)\Big\|_{M^{2,2}(\mathbb R^n)}&=&\Big\|\nabla(S(t) u_0)\Big\|_{M^{2,2}(\mathbb R^n)}
\lesssim  t^{-\frac12}\Big\|u_0\Big\|_{M^{2,2}(\mathbb R^n)}.
\end{eqnarray*}
By Lemma 2.2, we  have that for $0\le t\le T$,
\begin{eqnarray*}
\Big\|\widetilde{u_0}(t)\Big\|_{M^{2,2}(\mathbb R^n)}&=&\Big\|S(t) u_0\Big\|_{M^{2,2}(\mathbb R^n)}
\lesssim  \Big\|u_0\Big\|_{M^{2,2}(\mathbb R^n)}.
\end{eqnarray*}
Combining these estimates together yields
\begin{eqnarray*}
\Big\|\widetilde{u_0}\Big\|_{X_t^p}&=& \sup_{0\le\tau\le t}\Big[\tau^{\frac12-\frac1{p}}\Big\|\widetilde{u_0}(\tau)\Big\|_{M^{p,2}(\mathbb R^n)}
+\tau^\frac12\Big\|\nabla\widetilde{u_0}(\tau)\Big\|_{M^{2,2}(\mathbb R^n)}+\Big\|\widetilde{u_0}(\tau)\Big\|_{M^{2,2}(\mathbb R^n)}\Big]\\
&\lesssim &  \Big\|u_0\Big\|_{M^{2,2}(\mathbb R^n)}.
\end{eqnarray*}
This proves (\ref{x-estimate1}).
\end{proof}

We also introduce another norm of $u\in X_T^p$:
$$\Big\|u\Big\|_{Y_t^p}:
=\sup\limits_{0<\tau<t}\Big(\tau^{\frac12-\frac{1}{p}}\big\|u(\tau)\big\|_{M^{p,2}(\mathbb R^n)}+\tau^\frac12\big\|\nabla u(\tau)\big\|_{M^{2,2}(\mathbb R^n)}\Big),
\ 0\le t\le T.
$$
Note that
$$\Big\|u\Big\|_{Y_t^p}=\mathcal R_1(t)+\mathcal R_2(t), \ \Big\|u\Big\|_{X_t^p}=\Big\|u\Big\|_{Y_t^p}+\mathcal R_3(t) \big(\ge \Big\|u\Big\|_{Y_t^p}\big), \ 0\le t\le T.$$
Now we can combine all these Lemmas together to obtain the following key estimate.
\begin{thm}\label{x-space-estimate2} For any $p\in (3, \frac{10}3)$ and $0<T\le +\infty$, there exists
a constant $C>0$ depending only on $n, p, \lambda$ such that
if $u\in C^0\big([0,T], H^\infty(\mathbb R^n, \mathbb C^n)\big)$ solves the covariant Ginzburg-Landau
equation (\ref{3.2}). Then
\begin{equation}\label{x-space-estimate3}
\Big\|u\Big\|_{X_t^p}\leq \Big\|\widetilde{u_0}\Big\|_{X^p_t}
+C\Big(\big\|u\big\|_{Y_t^p}^3+\big\|u\big\|_{Y_t^p}^5\Big), \ \forall\ 0\le t\le T.
\end{equation}
Furthermore, there exists $\epsilon_0>0$ depending on $n, p,\lambda$ such that if
\begin{equation}\label{small_initial}
\Big\|u_0\Big\|_{M^{2,2}(\mathbb R^n)}\le\epsilon_0,
\end{equation}
then for any $0\le t\le T$,
\begin{equation}\label{x-space-estimate4}
\Big\|u\Big\|_{X_t^p}\le 2\Big\|\widetilde{u_0}\Big\|_{X_t^p}\le C\Big\|u_0\Big\|_{M^{2,2}(\mathbb R^n)}.
\end{equation}
\end{thm}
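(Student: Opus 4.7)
The plan is to invoke the Duhamel representation
\begin{equation*}
u(t) = \widetilde{u_0}(t) + u^{(1)}(t) + u^{(2)}(t) + u^{(3)}(t), \quad 0 \le t \le T,
\end{equation*}
from (\ref{duhamel1})--(\ref{duhamel3}), apply the triangle inequality in the norm $\|\cdot\|_{X_t^p}$, and then control each of the three nonlinear contributions $u^{(l)}$ by the nine estimates proved in Subsection 4.1. Specifically, for the $M^{p,2}$--component of $\|u^{(1)}\|_{X_t^p}$ I will use Lemma \ref{le:5.4}, for the gradient in $M^{2,2}$ Lemma \ref{lem5.7}, and for the $M^{2,2}$--component Lemma \ref{le:5.10}; for $u^{(2)}$ I will use Lemma \ref{le:5.6}, Lemma \ref{le:5.9}, and Lemma \ref{le:5.12}; for $u^{(3)}$ I will use Lemma \ref{le:5.5}, Lemma \ref{le:5.8}, and Lemma \ref{le:5.11}. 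These contribute respectively $\mathcal R_1^3(t)$, $\mathcal R_1^2(t)\mathcal R_2(t)$, and $\mathcal R_1^5(t)$, each of which is dominated by $\|u\|_{Y_t^p}^3$ or $\|u\|_{Y_t^p}^5$ since $\|u\|_{Y_t^p} = \mathcal R_1(t) + \mathcal R_2(t)$. The intersection of the admissible ranges for $p$ from the nine lemmas, namely $(3,6)\cap(2,4)\cap(5/2,10/3)=(3,10/3)$, is precisely the range in the hypothesis, so summing these bounds yields (\ref{x-space-estimate3}).

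For the second part, the plan is a standard continuity (bootstrap) argument. By Lemma \ref{x-estimate}, $\|\widetilde{u_0}\|_{X_t^p} \le C_0 \|u_0\|_{M^{2,2}(\mathbb R^n)}$ for a fixed constant $C_0 = C_0(n,p,\lambda)$. Set $\phi(t) := \|u\|_{X_t^p}$, so that (\ref{x-space-estimate3}) and the inequality $\|u\|_{Y_t^p} \le \|u\|_{X_t^p} = \phi(t)$ give
\begin{equation*}
\phi(t) \le C_0\|u_0\|_{M^{2,2}(\mathbb R^n)} + C\bigl(\phi(t)^3 + \phi(t)^5\bigr), \quad 0 \le t \le T.
\end{equation*}
Since $u \in C^0([0,T], H^\infty(\mathbb R^n,\mathbb C^n))$, one verifies that $t \mapsto \phi(t)$ is continuous and nondecreasing with $\phi(0) = \|u_0\|_{M^{2,2}(\mathbb R^n)}$. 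Choose $\epsilon_0 > 0$ small enough (depending on $n,p,\lambda$) so that whenever $\|u_0\|_{M^{2,2}(\mathbb R^n)} \le \epsilon_0$, the polynomial $P(r) = C_0 \|u_0\|_{M^{2,2}(\mathbb R^n)} + C(r^3 + r^5) - r$ satisfies $P(2C_0\|u_0\|_{M^{2,2}(\mathbb R^n)}) < 0$; explicitly it suffices that $C((2C_0\epsilon_0)^2 + (2C_0\epsilon_0)^4) < 1/2$. A standard continuity argument then shows that the set $\{t \in [0,T] : \phi(t) \le 2C_0\|u_0\|_{M^{2,2}(\mathbb R^n)}\}$ is both open and closed in $[0,T]$ and contains $t=0$, hence equals $[0,T]$, yielding (\ref{x-space-estimate4}).

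The main obstacle I expect is the continuity of $t \mapsto \phi(t) = \|u\|_{X_t^p}$, which is needed to run the bootstrap. The $M^{2,2}$ part of $\phi$ is clearly continuous from $u \in C^0([0,T], H^\infty)$, but the two singularly weighted terms $\tau^{1/2 - 1/p}\|u(\tau)\|_{M^{p,2}}$ and $\tau^{1/2}\|\nabla u(\tau)\|_{M^{2,2}}$ may be delicate near $\tau = 0$; this is handled by combining the Duhamel formula with the short-time smallness of $\widetilde{u_0}$ in the weighted norms, which follows from Lemma \ref{le:2.3} and Lemma \ref{le:5.3} together with the fact that $u_0 \in H^\infty$, so that $\phi(t) \to 0$ as $t \to 0^+$ when measured on short intervals. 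Once continuity is in hand, the bootstrap closes automatically thanks to the cubic/quintic structure on the right-hand side of (\ref{x-space-estimate3}).
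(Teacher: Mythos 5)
Your proposal is correct and takes essentially the same route as the paper: Duhamel plus triangle inequality, the nine lemmas from Subsection 4.1 to control the three nonlinear contributions, and a continuity/bootstrap argument using the smallness of $\|u_0\|_{M^{2,2}}$. The only cosmetic difference is that the paper compares $\|u\|_{X_t^p}$ to the $t$-dependent threshold $2\|\widetilde{u_0}\|_{X_t^p}$ and derives a contradiction at the first crossing time $t_*$, whereas you compare against the fixed threshold $2C_0\|u_0\|_{M^{2,2}}$; both hinge on the same ingredient that $\|u\|_{Y_t^p}\to 0$ as $t\downarrow 0^+$, which the paper establishes via a Sobolev embedding of $H^\sigma$ into the relevant Morrey spaces and the continuity of $u$ in $H^\infty$.
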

\begin{proof} By the Duhamel formula (\ref{duhamel1}), we have
$$
\Big\|u\Big\|_{X_t^p}\leq \Big\|\widetilde{u_0}\Big\|_{X^p_t}+\sum_{l=1}^3 \Big\|u^{(l)}\Big\|_{X_t^p}.
$$
 For $u^{(1)}$, since Lemma 4.1, Lemma 4.2, and Lemma 4.3 hold for $p\in (3, \frac{10}3)$, we have
$$\Big\|u^{(1)}\Big\|_{X_t^p}\le C\mathcal R_1^3(t)\le C\Big\|u\Big\|_{Y_t^p}^3.$$
 For $u^{(2)}$, since Lemma 4.4, Lemma 4.5, and Lemma 4.6 hold for $p\in (3, \frac{10}3)$, we have
$$\Big\|u^{(2)}\Big\|_{X_t^p}\le C\mathcal R_1^2(t)\mathcal R_2(t)\le C\Big\|u\Big\|_{Y_t^p}^3.$$
For $u^{(3)}$, since Lemma 4.7, Lemma 4.8. and Lemma 4.9 hold for $p\in (3, \frac{10}3)$, we have
$$\Big\|u^{(3)}\Big\|_{X_t^p}\le C\mathcal R_1^5(t)\le C\Big\|u\Big\|_{Y_t^p}^5.$$
Putting these estimates together yields (\ref{x-space-estimate3}).

To show (\ref{x-space-estimate4}), first observe that
\begin{equation}\label{initial_vanish}
\lim_{t\downarrow 0^+} \Big\|u\Big\|_{Y_t^p}=0.
\end{equation}
In fact, by Sobolev's embedding theorem we have the following estimates: For some large $\sigma\in \mathbb Z_+$,
\begin{eqnarray*}
\Big\|u(t)\Big\|_{M^{p,2}(\mathbb R^n) }&\lesssim&\sup_{x\in\mathbb R^n} \Big\|u(t)\Big\|_{L^\infty(B_1(x))}+\sup_{x\in\mathbb R^n, R>1} \Big(R^{2-n}\int_{B_R(x)}|u(t)|^p\Big)^\frac1p
\lesssim  \Big\|u(t)\Big\|_{H^\sigma(\mathbb R^n)},
\end{eqnarray*}
and
\begin{eqnarray*}
\Big\|\nabla u(t)\Big\|_{M^{2,2}(\mathbb R^n) }&\lesssim&\sup_{x\in\mathbb R^n} \Big\|\nabla u(t)\Big\|_{L^\infty(B_1)}+\sup_{x\in\mathbb R^n, R>1} \Big(R^{2-n}\int_{B_R(x)}|\nabla u(t)|^2\Big)^\frac12
\lesssim  \Big\|u(t)\Big\|_{H^\sigma(\mathbb R^n)}.
\end{eqnarray*}
This, combined with $u\in C^0([0, T], H^\infty(\mathbb R^n,\mathbb C^n))$,  implies (\ref{initial_vanish}).

It follows from (\ref{initial_vanish}) and (\ref{x-space-estimate3}) that $ \big\|u\big\|_{X_{t}^p}<2\big\|\widetilde{u_0}\big\|_{X_{t}^p}$ for sufficiently small $t>0$.
Now, assume that there exists $t_*\in (0, T)$ such that $\displaystyle \big\|u\big\|_{X_{t_*}^p}=2\big\|\widetilde{u_0}\big\|_{X_{t_*}^p}\not=0$. This,
combined with $\displaystyle \big\|u\big\|_{Y_{t_*}^p}\le\displaystyle \big\|u\big\|_{X_{t_*}^p}$ and (\ref{x-space-estimate3}), implies that
\begin{eqnarray*}2\big\|\widetilde{u_0}\big\|_{X_{t_*}^p}&=&\big\|u\big\|_{X_{t_*}^p}
\le \Big\|\widetilde{u_0}\Big\|_{X^p_{t_*}}
+C\Big(\big\|u\big\|_{Y_{t_*}^p}^3+\big\|u\big\|_{Y_{t_*}^p}^5\Big)\\
&=& \Big\|\widetilde{u_0}\Big\|_{X^p_{t_*}}+C\Big(8\big\|{\widetilde{u_0}}\big\|_{X_{t_*}^p}^2+32\big\|\widetilde{u_0}\big\|_{X_{t_*}^p}^4\Big)\Big\|\widetilde{u_0}\Big\|_{X^p_{t_*}}.
\end{eqnarray*}
Hence we obtain
$$1\le 8C\big\|\widetilde{u_0}\big\|_{X_{t_*}^p}^2+32C\big\|\widetilde{u_0}\big\|_{X_{t_*}^p}^4.$$
This and Lemma \ref{x-estimate} imply
$$1\le 8C^3\big\|u_0\big\|_{M^{2,2}(\mathbb R^n)}^2+32C^5\big\|u_0\big\|_{M^{2,2}(\mathbb R^n)}^4
\le 8C^3\epsilon_0^2+32C^5\epsilon_0^4,
$$
which is impossible, provided that $\epsilon_0>0$ is chosen sufficiently small. Hence (\ref{x-space-estimate4}) holds and the proof is complete.
\end{proof}

\section{Global well-posedness of LLG (\ref{1.1}) and proof of Theorem \ref{th:1.1}}

In this section, we will give a proof of Theorem \ref{th:1.1}. The rough idea is to (i) approximate the initial data ${\bf m}_0$ by
${\bf m}_0^k$ in $H^\infty_*(\mathbb R^n, \mathbb S^2)$ and consider the local smooth solutions ${\bf m}^k:\mathbb R^n\times [0, T^k]\to\mathbb S^2$
of (\ref{1.1}) with initial data ${\bf m}_0^k$; (ii) apply Theorem \ref{covariant_LLG} and Theorem \ref{x-space-estimate2} to obtain uniform bounds on $\displaystyle
\big\|\nabla {\bf m}^k\big\|_{X_{T^k}^p}$;  (iii) employ the $\epsilon$-regularity theory of LLG (\ref{1.1}) to obtain $T^k=+\infty$
and uniform upper bounds on $\displaystyle\big\|{\bf m}^k\big\|_{C^l(\mathbb R^n\times [\delta, +\infty))}$ for all $l\in\mathbb Z_+$ and $\delta>0$; and
(iv) show that the limit map ${\bf m}$ of ${\bf m}^k$ is the desired solution.

\subsection{Approximation of initial data} For an initial data ${\bf m}_0$ given by Theorem \ref{th:1.1}, we have
\begin{lemma}\label{approx} There exists $\epsilon_0>0$ such that if ${\bf m}_0:\mathbb R^n\to\mathbb S^2$ satisfies ${\bf m}_0-{\bf m}_\infty\in L^2(\mathbb R^n)$
for some ${\bf m}_\infty\in\mathbb S^2$, and
\begin{equation}\label{approx1}
\displaystyle\big\|\nabla{\bf m}_0\big\|_{M^{2,2}(\mathbb R^n)}\le\epsilon_0,
\end{equation}
then there exist a sequence of maps  $\{{\bf m}_0^k\}\subset H^\infty_*(\mathbb R^n,\mathbb S^2)$ such that
\begin{equation}\label{approx2}
\sup_{k\ge 1}\big\|\nabla {\bf m}_0^k\big\|_{M^{2,2}(\mathbb R^n)}\le C\big\|\nabla{\bf m}_0\big\|_{M^{2,2}(\mathbb R^n)}\le C\epsilon_0,
\end{equation}
and
\begin{equation}\label{approx3}
\lim_{k\rightarrow \infty}\Big(\big\|{\bf m}_0^k-{\bf m}_0\big\|_{L^2(\mathbb R^n)}+\big\|\nabla({\bf m}_0^k-{\bf m}_0)\big\|_{L^2(B_R)}\Big)=0, \ \forall R>0.
\end{equation}
If, in addition, $\nabla{\bf m}_0\in L^2(\mathbb R^n)$, then
\begin{equation}\label{h1-conv}
\lim_{k\rightarrow\infty}\big\|\nabla({\bf m}_0^k-{\bf m}_0)\big\|_{L^2(\mathbb R^n)}=0.
\end{equation}
\end{lemma}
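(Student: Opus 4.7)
My approach is the classical mollify-and-project construction, using the smallness assumption in (\ref{approx1}) to guarantee that the mollification of $\mathbf{m}_0$ stays bounded away from $0$, so that the radial projection onto $\mathbb S^2$ is smooth. Fix a standard radially symmetric mollifier $\phi\in C_c^\infty(B_1)$ with $\int\phi=1$ and set $\phi_\epsilon(x)=\epsilon^{-n}\phi(x/\epsilon)$. I define
$$\mathbf v^k:=\phi_{1/k}*\mathbf m_0\in C^\infty(\mathbb R^n,\mathbb R^3),\qquad \mathbf m_0^k:=\mathbf v^k/|\mathbf v^k|.$$

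\textbf{Step 1: pointwise lower bound on $|\mathbf v^k|$.} Using $|\mathbf m_0|=1$ a.e., a direct expansion yields the identity
$$1-|\mathbf v^k(x)|^2=\int_{\mathbb R^n}\phi_{1/k}(x-y)\bigl|\mathbf m_0(y)-\mathbf v^k(x)\bigr|^2\,dy.$$
Since $\mathbf v^k(x)$ is precisely the $\phi_{1/k}(x-\cdot)$-average of $\mathbf m_0$, it is the minimizer of this variance, so the right-hand side is bounded by the corresponding expression with $\mathbf v^k(x)$ replaced by the mean of $\mathbf m_0$ on $B_{1/k}(x)$. Combining $\phi_{1/k}\lesssim 1/|B_{1/k}|$ with Poincar\'e's inequality and Definition \ref{de:1.1}, I obtain
$$1-|\mathbf v^k(x)|^2\lesssim \frac{1}{|B_{1/k}(x)|}\int_{B_{1/k}(x)}|\mathbf m_0-\overline{\mathbf m_0}|^2\lesssim k^{-2}\cdot\frac{1}{|B_{1/k}|}\int_{B_{1/k}(x)}|\nabla\mathbf m_0|^2\lesssim\|\nabla\mathbf m_0\|_{M^{2,2}(\mathbb R^n)}^2\le C\epsilon_0^2.$$
Choosing $\epsilon_0$ small gives $|\mathbf v^k|\ge 3/4$ uniformly, making $\mathbf m_0^k$ smooth with values in $\mathbb S^2$.

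\textbf{Step 2: uniform Morrey bound and Sobolev regularity.} Differentiating the definition of $\mathbf m_0^k$ and using $|\mathbf v^k|\ge 3/4$ yields $|\nabla\mathbf m_0^k|\lesssim|\nabla\mathbf v^k|$. Since $\phi_{1/k}*\nabla\mathbf m_0$ is pointwise dominated by the Hardy-Littlewood maximal function of $\nabla\mathbf m_0$, which is bounded on $M^{2,2}(\mathbb R^n)$ (as used in Lemma \ref{le:2.1}), I get $\|\nabla\mathbf m_0^k\|_{M^{2,2}(\mathbb R^n)}\lesssim\|\nabla\mathbf v^k\|_{M^{2,2}(\mathbb R^n)}\lesssim\|\nabla\mathbf m_0\|_{M^{2,2}(\mathbb R^n)}$, giving (\ref{approx2}). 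For the Sobolev regularity, note $\mathbf v^k-\mathbf m_\infty=\phi_{1/k}*(\mathbf m_0-\mathbf m_\infty)$ since $\mathbf m_\infty$ is constant; Young's inequality with $\nabla^j\phi_{1/k}\in L^1$ and $\mathbf m_0-\mathbf m_\infty\in L^2$ shows $\mathbf v^k-\mathbf m_\infty\in H^\sigma(\mathbb R^n)$ for every $\sigma\ge 0$. Composing with the smooth map $\mathbf v\mapsto\mathbf v/|\mathbf v|$ on the range $\{|\mathbf v|\ge 3/4\}$ preserves this via the standard Moser-type composition estimates, so $\mathbf m_0^k-\mathbf m_\infty\in H^\infty_*(\mathbb R^n,\mathbb S^2)$.

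\textbf{Step 3: convergence.} Standard mollifier theory gives $\mathbf v^k\to\mathbf m_0$ in $L^2(\mathbb R^n)$ (applied to the $L^2$ function $\mathbf m_0-\mathbf m_\infty$), and the elementary bound $|1-|\mathbf v^k||\le|\mathbf v^k-\mathbf m_0|$ then controls $\|\mathbf m_0^k-\mathbf m_0\|_{L^2(\mathbb R^n)}\lesssim\|\mathbf v^k-\mathbf m_0\|_{L^2(\mathbb R^n)}\to 0$. For the gradients, $\nabla\mathbf v^k=\phi_{1/k}*\nabla\mathbf m_0$ converges to $\nabla\mathbf m_0$ in $L^2(B_R)$ for every $R>0$ (since $\nabla\mathbf m_0\in M^{2,2}\subset L^2_{\rm{loc}}$), and in $L^2(\mathbb R^n)$ under the extra assumption $\nabla\mathbf m_0\in L^2$. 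Using $\mathbf m_0\cdot\nabla\mathbf m_0=0$ a.e.\ (from $|\mathbf m_0|=1$), the formula $\nabla\mathbf m_0^k=\nabla\mathbf v^k/|\mathbf v^k|-\mathbf v^k(\mathbf v^k\cdot\nabla\mathbf v^k)/|\mathbf v^k|^3$ together with dominated convergence (the coefficients $1/|\mathbf v^k|$, $\mathbf v^k/|\mathbf v^k|^3$ are uniformly bounded and converge a.e.) yields (\ref{approx3}) and (\ref{h1-conv}).

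\textbf{Main obstacle.} The only non-routine point is Step 1: obtaining the \emph{uniform} (in both $k$ and $x$) pointwise lower bound on $|\mathbf v^k|$ from the Morrey smallness, because the $M^{2,2}$-norm does not dominate $L^\infty$-norms of $\mathbf m_0$. The resolution is the clean variance identity for $1-|\mathbf v^k|^2$, which converts the problem into a local mean-oscillation estimate that pairs exactly with the scaling of $M^{2,2}$ via Poincar\'e.
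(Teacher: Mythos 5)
Your proposal is correct and follows essentially the same mollify-and-project construction as the paper: the crucial Step 1 pointwise lower bound on $|\mathbf v^k|$ is exactly what the paper obtains via its ``modified Poincar\'e inequality,'' and your variance identity $1-|\mathbf v^k|^2=\int\phi_{1/k}(x-y)|\mathbf m_0(y)-\mathbf v^k(x)|^2\,dy$ is just a cleaner way of arriving at the same estimate. The only cosmetic difference is that you obtain the uniform Morrey bound on $\nabla\mathbf v^k$ via domination by the Hardy--Littlewood maximal function and its $M^{2,2}$-boundedness, whereas the paper uses the direct ball-enlargement estimate $\int_{B_R(x)}|\nabla\widetilde{\mathbf m}_0^k|^2\le\int_{B_{R+1/k}(x)}|\nabla\mathbf m_0|^2$; both are standard and equally valid.
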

\begin{proof} Let $\phi\in C^\infty(\mathbb R^n,\mathbb R_+)$ be a standard mollifier, with supp ($\phi)\subset B_1$ and $\displaystyle\int_{\mathbb R^n}\phi(y)\,dy=1$.
Set $\displaystyle\phi_k(x)=k^n\phi(kx)$ and define
$$\widetilde{\bf m}_0^k(x)=(\phi_k*{\bf m}_0)(x)=\int_{\mathbb R^n}\phi_k(x-y){\bf m}_0(y)\,dy=\int_{\mathbb R^n}\phi(y) {\bf m}_0(x-\frac1{k}y)\,dy, \ x\in\mathbb R^n.$$
Applying a modified Poincar\'e inequality, we have that for any $x\in \mathbb R^n$ and $k\ge 1$,
\begin{eqnarray*}
\int_{B_1(x)}\big|\widetilde{\bf m}_0^k(x)-{\bf m}_0(x-\frac1{k}y)\big|^2\,dy
&\lesssim& \int_{B_1(x)}|\nabla\widetilde{\bf m}_0^k|^2\\
&\lesssim& 2^{2-n}\int_{B_{2}(x)}|\nabla {\bf m}_0|^2\le C\big\|\nabla{\bf m}_0\big\|_{M^{2,2}(\mathbb R^n)}^2.
\end{eqnarray*}
Therefore we have that for any $x\in\mathbb R^n$ and $k\ge 1$, 
$$
{\rm{dist}}^2(\widetilde{\bf m}_0^k(x),\mathbb S^2)\le C\int_{B_1(x)}\big|\widetilde{\bf m}_0^k(x)-{\bf m}_0(x-\frac1{k}y)\big|^2\,dy
\le C\big\|\nabla{\bf m}_0\big\|_{M^{2,2}(\mathbb R^n)}^2\le C\epsilon_0^2.
$$
By choosing $\epsilon_0>0$ sufficiently small, we have that
$$\frac34\le |\widetilde{\bf m}_0^k(x)|\le 1, \ \forall x\in\mathbb R^n.$$
Since $\widetilde{\bf m}_0^k-{\bf m}_\infty=\phi_k*({\bf m}_0-{\bf m}_\infty)$, it is not hard to see that
$$ \lim_{k\rightarrow +\infty}\big\|\widetilde{\bf m}_0^k-{\bf m}_0\big\|_{L^2(\mathbb R^n)}
= \lim_{k\rightarrow \infty}\big\|\phi_k*({\bf m}_0-{\bf m}_\infty)-({\bf m}_0-{\bf m}_\infty)\big\|_{L^2(\mathbb R^n)}=0.$$
It is easy to check that for any $k\ge 1$,
$$R^{2-n}\int_{B_R(x)}\big|\nabla\widetilde{\bf m}_0^k\big|^2\le R^{2-n}\int_{B_{R+1}(x)}|\nabla {\bf m}_0|^2, \ \forall B_R(x)\subset\mathbb R^n,$$
so that
$$\Big\|\nabla \widetilde{\bf m}_0^k\Big\|_{M^{2,2}(\mathbb R^n)}\le C\Big\|\nabla {\bf m}_0\Big\|_{M^{2,2}(\mathbb R^n)}.
$$
Set $$\displaystyle \Pi(y)=\frac{y}{|y|}:\Big\{y\in\mathbb R^3\ | \ \frac34\le |y|\le 1\Big\} \to \mathbb S^2,$$ and define
$${\bf m}_0^k(x):=\Pi(\widetilde{\bf m}_0^k)(x)\big(=\frac{\widetilde{\bf m}_0^k(x)}{|\widetilde{\bf m}_0^k(x)|}\big), \ \forall x\in \mathbb R^n.$$
Since
$$\Big\|\nabla\Pi\Big\|_{L^\infty(\{y\in\mathbb R^3\ | \ \frac34\le |y|\le 1\})}\le 4,$$
it is easy to check that  ${\bf m}_0^k\in C^\infty(\mathbb R^n,\mathbb S^2)$ satisfies (\ref{approx2}).
Since
$$\displaystyle{\bf m}_0^k(x)-{\bf m}_0(x)=\Pi(\widetilde{\bf m}_0^k(x))-\Pi({\bf m}_0(x)), \ \forall x\in\mathbb R^n,$$
it is also easy to see that
$$\lim_{k\rightarrow\infty}\big\|{\bf m}_0^k-{\bf m}_0\big\|_{L^2(\mathbb R^n)}=0,$$
and
$$\lim_{k\rightarrow\infty}\big\|\nabla({\bf m}_0^k-{\bf m}_0)\big\|_{L^2(B_R)}=0$$
for any $0<R<+\infty$.
In particular, we have that ${\bf m}_0^k-{\bf m}_\infty\in L^2(\mathbb R^n)$. For any $l\ge 1$,
since $\nabla^l \widetilde {\bf m}_0^k=\nabla^l(\widetilde{\bf m}_0^k-{\bf m}_\infty)=\nabla^l(\phi_k*({\bf m}_0-{\bf m}_\infty))\in L^2(\mathbb R^n)$
and $\nabla^l{\bf m}_0^k=\nabla^l\big(\Pi(\widetilde{\bf m}_0^k)\big)$, we conclude that $\nabla^l {\bf m}_0^k\in L^2(\mathbb R^n)$.
Thus ${\bf m}_0^k\in H^\infty_*(\mathbb R^n,\mathbb S^2)$.

It is clear that when $\nabla{\bf m}_0\in L^2(\mathbb R^n)$, $\nabla({\bf m}_0^k-{\bf m}_0)\rightarrow 0$ in $L^2(\mathbb R^n)$.
The proof is complete.
\end{proof}

\subsection{Uniform estimates of approximate solutions of (\ref{1.1})} For ${\bf m}_0$ given by Theorem \ref{th:1.1},
let ${\bf m}_0^k$ be the approximated initial data given by Lemma \ref{approx}. Then it is known  (see \cite{Melcher} proposition 1)
that  there exist a maximal time interval $T^k>0$ and a solution
$${\bf m}^k\in C^0([0, T^k], H^\infty_*(\mathbb R^n,\mathbb S^2)), \ {\rm{with}}\ \partial_t {\bf m}^k\in C^0([0, T^k], H^\infty(\mathbb R^n)),$$
to the LLG (\ref{1.1}), with initial data ${\bf m}_0^k$. Now let $({\bf a}^k, {\bf u}^k)\in C^0([0,T^k], H^\infty(\mathbb R^n, \mathbb R^{n+1}\times \mathbb C^{n+1}))$
be the corresponding solution of the covariant LLG equation and the covariant Ginzburg-Landau equation given by Theorem 3.1, under the Coulomb gauge. Since
$$\big\|u_0^k\big\|_{M^{2,2}(\mathbb R^n)}\approx \big\|\nabla {\bf m}_0^k\big\|_{M^{2,2}(\mathbb R^n)}\le \epsilon_0,$$
we can apply Theorem \ref{x-space-estimate2} to conclude that for $p\in (3,\frac{10}3)$, $u^k\in X_{T^k}^p$ and satisfies
\begin{equation}\label{x-estimate6}
\big\|u^k\big\|_{X^p_{T^k}}\le C\big\|u_0^k\big\|_{M^{2,2}(\mathbb R^n)}\le C\epsilon_0.
\end{equation}
By virtue of the relation between $u^k$ and ${\bf m}^k$, this implies
\begin{equation}\label{x-estimate7}
\sup_{0\le t\le T_k}\big\|\nabla {\bf m}^k(t)\big\|_{M^{2,2}(\mathbb R^n)}\le \big\|u^k\big\|_{X^p_{T_k}}\le C\big\|u_0^k\big\|_{M^{2,2}(\mathbb R^n)}\le C\epsilon_0.
\end{equation}

\subsection{$\epsilon_0$-regularity of LLG (\ref{1.1})} To show that (\ref{x-estimate7}) yields $T^k=+\infty$ and ${\bf m}^k\in C^\infty(\mathbb R^n\times (0,+\infty))$ with uniformly bounded $C^l$-norms in $\mathbb R^n\times [\delta,+\infty)$
for any $\delta>0$, we need the following regularity theorem for LLG (\ref{1.1}). For $z_0=(x_0,t_0)\in\mathbb R^n\times (0,+\infty)$ and $0<r_0<\sqrt{t_0}$,
let $P_{r_0}(z_0)=B_r(x_0)\times [t_0-r_0^2, t_0]$ denote the parabolic ball with center $z_0$ and radius $r_0$. Set
$$H^1(P_{r_0}(z_0),\mathbb S^2)=\Big\{{\bf m}: P_{r_0}(z_0)\to \mathbb S^2\ \Big | \ \nabla {\bf m}\in L^2(P_{r_0}(z_0)), \ \partial_t{\bf m}\in L^2(P_{r_0}(z_0))\Big\}.$$
\begin{lemma}\label{e-regularity}  There exists $\epsilon_0>0$ depending only on $\lambda$ and $n$ with the following property:
If ${\bf m}\in  H^1(P_{r_0}(z_0),\mathbb S^2)$, with $\nabla^2{\bf m}\in L^2(P_{r_0}(z_0))$,  is a weak solution
of the LLG (\ref{1.1})\footnote{${\bf m}$ satisfies (\ref{1.1}) in the sense of distributions, if,  for any $\Phi\in C_0^\infty(P_{r_0}(z_0),\mathbb S^2)$,
the following holds:
$$\int_{P_{r_0}(z_0)} \big[\langle\partial_t {\bf m}, \Phi\rangle+\lambda\langle\nabla{\bf m}, \nabla\Phi\rangle-\langle{\bf m}\times\nabla{\bf m},\nabla\Phi\rangle
-\lambda\langle|\nabla{\bf m}|^2{\bf m},\Phi\rangle\big]\,dyds=0.$$} satisfying
\begin{equation}\label{e-condition}
\Big\|\nabla{\bf m}\Big\|_{M^{2,2}(P_{r_0}(z_0))}:=\sup_{P_r(z)\subset P_{r_0}(z_0)} \Big( r^{2-(n+2)}\int_{P_r(z)}|\nabla{\bf m}(y,s)|^2\,dyds\Big)^\frac12
\le\epsilon_0,
\end{equation}
then ${\bf m}\in C^\infty(P_{\frac{r_0}2}(z_0), \mathbb S^2)$,  and for all $l\ge 0$,
\begin{equation}\label{e-conclusion}
\big\|{\bf m}\big\|_{C^l(P_{\frac{r_0}2}(z_0))}\le C(\lambda, l, \epsilon_0, r_0).
\end{equation}
\end{lemma}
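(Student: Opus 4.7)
The strategy is to combine a local moving-frame reduction to the complex Ginzburg--Landau equation with the Morrey estimates of Sections 2--4, and then run a parabolic bootstrap. By the parabolic scaling invariance of (\ref{1.1}) and the fact that the parabolic Morrey quantity $\|\nabla\mathbf{m}\|_{M^{2,2}(P_{r_0}(z_0))}$ is scale-invariant under $(x,t)\mapsto(x_0+r_0 x,\,t_0+r_0^2 t)$, I would first reduce to the case $z_0=(0,0)$ and $r_0=1$. Under (\ref{e-condition}), Poincar\'e--Morrey type inequalities show that, after shrinking to $P_{3/4}$, the image of $\mathbf{m}$ lies in a small geodesic neighborhood of some $\mathbf{m}_\infty\in\mathbb{S}^2$, so a smooth orthonormal frame $\{X,Y\}$ of $\mathbf{m}^{-1}T\mathbb{S}^2$ exists over $P_{3/4}$ with no topological obstruction.

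Next I would perform the Coulomb gauge transformation of Section 3.3 locally---solving $-\Delta\theta(\cdot,t)=\mathrm{div}\,a(\cdot,t)$ on a ball, or on $\mathbb{R}^n$ after a smooth cutoff. By Theorem \ref{covariant_LLG}, the complex function $u=\langle\nabla\mathbf{m},X\rangle+i\langle\nabla\mathbf{m},Y\rangle$ solves the covariant complex Ginzburg--Landau equation (\ref{3.2}) on $P_{3/4}$, and from the hypothesis combined with a Fubini slicing argument one has $\|u(t)\|_{M^{2,2}(B_{3/4})}\lesssim\epsilon_0$ for a.e.\ $t$ near the bottom of the cylinder.

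Now pick a time slice $t_1\in(-1,-3/4)$ at which $\|u(t_1)\|_{M^{2,2}(B_{3/4})}\lesssim\epsilon_0$, regard $u(t_1)$ as an initial datum, and apply Duhamel's formula (\ref{duhamel1}) together with Theorem \ref{x-space-estimate2} on the cylinder $P_{1/2}(z_0)\subset[t_1,0]\times\mathbb{R}^n$. This yields, for some $p\in(3,\tfrac{10}{3})$,
\[
\sup_{t_1<t\leq 0}\Bigl((t-t_1)^{1/2-1/p}\|u(t)\|_{M^{p,2}(\mathbb{R}^n)}+(t-t_1)^{1/2}\|\nabla u(t)\|_{M^{2,2}(\mathbb{R}^n)}\Bigr)\lesssim\epsilon_0.
\]
Iterating the smoothing estimates of Lemmas \ref{le:2.3} and \ref{le:5.3}---each application raises the Morrey integrability by a factor $n+1$---turns this into pointwise control, giving $\nabla\mathbf{m}\in L^\infty(P_{1/2}(z_0))$. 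Once $\nabla\mathbf{m}$ is bounded, LLG (\ref{1.1}) becomes a uniformly parabolic semilinear system with quadratic gradient nonlinearity, and standard parabolic Schauder theory on a nested sequence of cylinders produces the $C^l$-bounds claimed in (\ref{e-conclusion}).

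The main obstacle I expect is the \emph{localization}: the estimates of Sections 2--4 are stated on all of $\mathbb{R}^n$, whereas here $\mathbf{m}$ lives on a bounded parabolic cylinder. Extending $\mathbf{m}$ by a constant outside $P_{3/4}$ breaks the PDE, so one must instead work with a cutoff version of $u$ and treat the cutoff commutators as inhomogeneous remainders in the Duhamel formula. The same issue arises in setting up the Coulomb gauge on a ball and in verifying that Lemma \ref{le:4.1}, whose proof relies on Riesz potentials on the whole space, still controls $a$ in the correct Morrey spaces when the source is only compactly supported. These technicalities are standard; the smallness of $\epsilon_0$ provides exactly the slack needed to absorb the commutator errors.
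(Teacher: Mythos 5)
Your proposal takes a genuinely different route from the paper, and it has a serious gap.

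The paper proves this $\epsilon$-regularity lemma by a purely local, direct PDE argument: after rescaling, it uses the assumption $\nabla^2\mathbf{m}\in L^2$ to pass to the equivalent form $\lambda\partial_t\mathbf{m}+\mathbf{m}\times\partial_t\mathbf{m}=(1+\lambda^2)(\Delta\mathbf{m}+|\nabla\mathbf{m}|^2\mathbf{m})$, multiplies by $\partial_t\mathbf{m}\,\phi^2$ to obtain a local energy inequality controlling $\int|\partial_t\mathbf{m}|^2$ by $r^{-2}\int|\nabla\mathbf{m}|^2$, combines this with (\ref{e-condition}) to get smallness of the full renormalized parabolic energy, runs a blow-up/compactness argument (\`a la Moser, Melcher, Ding--Wang) to establish a geometric decay of this quantity, iterates to get Morrey decay and hence H\"older continuity, and then invokes standard higher-order parabolic regularity. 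The moving-frame/Duhamel/Morrey machinery of Sections~2--4 is \emph{not} used here at all; it is used only to obtain the global-in-time a priori bound (Theorem~\ref{x-space-estimate2}) for the $H^\infty$ approximating solutions, which is then fed into this lemma.

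The gap in your argument is circularity. You propose to apply Duhamel's formula and Theorem~\ref{x-space-estimate2} to $u$, but that theorem is an a priori estimate whose hypothesis is $u\in C^0([0,T],H^\infty(\mathbb{R}^n,\mathbb{C}^n))$ solving (\ref{3.2}) classically --- precisely the regularity you are trying to establish. The hypothesis here is only $\mathbf{m}\in H^1$, $\nabla^2\mathbf{m}\in L^2$; at that level it is not clear that the moving frame, the Coulomb gauge, or the covariant system (\ref{3.1})--(\ref{3.2}) are even defined in the needed sense, and applying the Morrey smoothing chain to a merely-weak $u$ presupposes the conclusion. To salvage the approach you would need a genuine perturbative fixed-point construction in a local $X^p$-type space together with a weak-strong uniqueness result at this low regularity --- but the paper's uniqueness lemma (Lemma~\ref{uniqueness}) itself relies on the $\epsilon$-regularity lemma you are trying to prove, so that route also closes a circle. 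Relatedly, the remark that the cutoff-commutator and local-Coulomb-gauge issues ``are standard'' and absorbed by smallness of $\epsilon_0$ is not a proof: none of the whole-space Morrey lemmas (Lemmas~\ref{le:2.1}--\ref{le:5.3}, Lemma~\ref{le:4.1}) are shown to localize, and these are precisely the estimates on which your chain depends. Finally, your argument never uses the hypothesis $\nabla^2\mathbf{m}\in L^2(P_{r_0}(z_0))$, whereas the paper's proof uses it essentially (both to rewrite the equation and to justify the integrations by parts behind the local energy inequality) --- a sign that the proposed route is not engaging with the actual regularity class of the statement.
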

\begin{proof} By rescaling, we may assume $r_0=2$ and $z_0=(0,4)$. Since $\nabla^2 {\bf m}\in L^2(P_2(0,4))$, it is known that
(\ref{1.1}) is equivalent to
\begin{equation}\label{LLG}
\lambda \partial_t{\bf m}+{\bf m}\times \partial_t {\bf m}=(1+\lambda^2)(\Delta{\bf m}+|\nabla{\bf m}|^2{\bf m}).
\end{equation}
For any $\phi\in C_0^\infty(B_2)$, we can multiply (\ref{LLG}) by $\partial_t{\bf m}\phi^2$ and integrate the resulting equation
over $B_2\times [t_1, t_2]$, $0<t_1<t_2\le 4$,  to obtain
the following local energy inequality:
\begin{equation}\label{local_energy_ineq1}
\lambda\int_{t_1}^{t_2}\int_{B_4} |\partial_t{\bf m}|^2\phi^2+(1+\lambda^2)\int_{B_2}|\nabla{\bf m}(t_2)|^2\phi^2
\le (1+\lambda^2)\int_{B_2}|\nabla{\bf m}(t_1)|^2\phi^2+C(\lambda)\int_{t_1}^{t_2}\int_{B_2}|\nabla{\bf m}|^2|\nabla\phi|^2.
\end{equation}
By choosing suitable $\phi$ and applying Fubini's theorem, (\ref{local_energy_ineq1}) implies that
\begin{equation}\label{local_energy_ineq2}
\int_{P_{\frac{r}2}(z)}|\partial_t{\bf m}|^2\le C(\lambda) r^{-2}\int_{P_{r}(z)}|\nabla{\bf m}|^2, \ \forall P_r(z)\subset P_2(0,4).
\end{equation}
This, combined with (\ref{e-condition}), implies the smallness of renormalized total energy of ${\bf m}$:
\begin{equation}\label{e-condition1}
r^{-n}\int_{P_r(z)}(|\nabla{\bf m}|^2+r^2|\partial_t{\bf m}|^2)\le C\epsilon_0^2, \ \forall \ P_r(z)\subset P_{\frac32}(0,4).
\end{equation}
Now we can perform a blow-up argument, similar to that by
Moser \cite{Moser}, Melcher \cite{Melcher0}, and Ding-Wang \cite{Ding-Wang},
to conclude that there exists $\theta_0\in (0,\frac14)$ such that
\begin{equation}\label{e-conclusion1}
(\theta_0 r)^{-n}\int_{P_{\theta_0 r}(z)}(|\nabla{\bf m}|^2+(\theta_0r)^2|\partial_t{\bf m}|^2)\le
\frac12 r^{-n}\int_{P_{r}(z)}(|\nabla{\bf m}|^2+r^2|\partial_t{\bf m}|^2),
\ \forall \ P_r(z)\subset P_{\frac32}(0,4).
\end{equation}
It is standard that by iterations, (\ref{e-conclusion1}) implies that there exists $\alpha_0\in (0,1)$ depending only on
$n,\lambda$ such that
\begin{equation}\label{morrey1}
r^{-n}\int_{P_{r}(z)}(|\nabla{\bf m}|^2+r^2|\partial_t{\bf m}|^2)
\le C(\lambda) r^{2\alpha_0}\int_{P_{2}(0,4)}|\nabla{\bf m}|^2 \le C(\lambda) \epsilon_0^2 r^{2\alpha_0},
\ \forall\ P_r(z)\subset P_{\frac32}(0,4).
\end{equation}
This, combined with Morrey's decay lemma, yields that ${\bf m}\in C^{\alpha_0}\big(P_{\frac32}(0,4),\mathbb S^2\big)$ and
\begin{equation}\label{holder_contin}
\Big[{\bf m}\Big]_{C^{\alpha_0}(P_{\frac32}(0,4))}\le C(\lambda) \big\|\nabla{\bf m}\big\|_{M^{2,2}(P_2(0,4))}.
\end{equation}
By higher order regularity theorem of LLG (\ref{1.1}) and (\ref{holder_contin}), we conclude that ${\bf m}\in C^\infty(P_\frac32(0,4))$
and satisfies the estimate (\ref{e-conclusion}). This completes the proof.
\end{proof}

Now we return to the proof of Theorem \ref{th:1.1}. By Lemma \ref{e-regularity} and the estimate (\ref{x-estimate7}), we want to show
\\

\noindent{\it Claim}. $T^k=+\infty$, and $\displaystyle \sup_{k\ge 1} \big\|{\bf m}^k\big\|_{C^l(\mathbb R^n\times [\delta,+\infty))}$ is uniformly bounded
 for all $l\ge 1$ and $\delta>0$.

For, otherwise, $T^k<+\infty$.  Then we must have that
$$\lim_{t\uparrow T_k}\big\|\nabla{\bf m}^k(t)\big\|_{L^\infty(\mathbb R^n)}=+\infty.$$
On the other hand, by (\ref{x-estimate7}),  we have
\begin{equation}\label{e-condition2}
\sup_{t\in [\frac{T_k}2, T_k]}\big\|\nabla{\bf m}^k\big\|_{M^{2,2}(\mathbb R^n)}\le C\epsilon_0.
\end{equation}
This clearly implies that for any $x_0\in \mathbb R^n$,
\begin{equation}\label{e-condition3}
\Big\|\nabla{\bf m}^k\Big\|_{M^{2,2}\big(P_{\frac{\sqrt{T_k}}2}(x_0, T_k)\big)}\le C\epsilon_0.
\end{equation}
Since ${\bf m}^k\in C^0([0,T^k], H^\infty_*(\mathbb R^n))$, $\nabla^2{\bf m}^k \in L^2_{\rm{loc}}(\mathbb R^n\times [\frac{T_k}2, T_k])$.
Thus we  can apply Lemma \ref{e-regularity} to conclude that for any $k$,
$\displaystyle\|\nabla{\bf m}^k(t)\|_{L^\infty(\mathbb R^n)}$ remains to be bounded, as $t$ tends to $T_k$.
We reach a contradiction. Thus $T_k=+\infty$. Now by applying Lemma \ref{e-regularity} again, we would conclude
that ${\bf m}^k\in C^\infty(\mathbb R^n\times (0, +\infty))$. Moreover, for any $\delta>0$,
\begin{equation}\label{uniform_bound}
\sup_{k\ge 1}\big\|{\bf m}^k\big\|_{C^l\big(\mathbb R^n\times [\delta,+\infty)\big)}\le C(l, \delta,\epsilon_0).
\end{equation}
This proves the claim.

After taking possible subsequences, we may assume that there exists a ${\bf m}\in C^\infty(\mathbb R^n\times (0,+\infty),\mathbb S^2)$
such that ${\bf m}^k$ converges to ${\bf m}$ in $C^l_{\rm{loc}}(\mathbb R^n\times (0,+\infty))$. It is not hard to see that
${\bf m}|_{t=0}={\bf m}_0$ in the sense of trace. By the lower semicontinuity and (\ref{x-estimate7}), we also have
$$\sup_{t\ge 0} \big\|\nabla{\bf m}\big\|_{M^{2,2}(\mathbb R^n)}\le \big\|u({\bf m})\big\|_{X_\infty^p}\le\liminf_{k\rightarrow\infty}\big\|u({\bf m}^k)\big\|_{X_\infty^p}
\le C\big\|\nabla {\bf m}_0\big\|_{M^{2,2}(\mathbb R^n)}\le C\epsilon_0.$$
Moreover, by the $\epsilon$-regularity Lemma \ref{e-regularity}, we conclude that ${\bf m}$ also satisfies the derivative estimates (\ref{smoothness}).

Finally, if ${\bf m}_0\in H^1_*(\mathbb R^n,\mathbb S^2)$, then we have
$$\lim_{k\rightarrow \infty}\int_{\mathbb R^n}|\nabla({\bf m}_0^k-{\bf m}_0)|^2\,dx=0.$$
Since ${\bf m}^k\in C^\infty(\mathbb R^n\times (0,+\infty))$ satisfies the equation (\ref{LLG}) in $\mathbb R^n\times (0,+\infty)$, we can multiply
(\ref{LLG}) by $\partial_t {\bf m}^k$ and integrate the resulting equation on $\mathbb R^n$ to get the global energy equality:
\begin{equation}\label{energy_ineq1}
\frac{d}{dt}\int_{\mathbb R^n}|\nabla {\bf m}^k(t)|^2\,dx
=-\frac{\lambda}{1+\lambda^2}\int_{\mathbb R^n}|\partial_t{\bf m}^k(t)|^2\,dx.
\end{equation}
Integrating (\ref{energy_ineq1}) over $[0,t]$ yields
\begin{equation}\label{energy_ineq2}
\int_{\mathbb R^n}|\nabla {\bf m}^k(t)|^2\,dx
+\frac{\lambda}{1+\lambda^2}\int_0^t\int_{\mathbb R^n}|\partial_t{\bf m}^k|^2\,dxdt
=\int_{\mathbb R^n}|\nabla {\bf m}^k_0|^2\,dx.
\end{equation}
Sending $k$ to $\infty$ in (\ref{energy_ineq2}) and applying the lower semicontinuity, we conclude that ${\bf m}$ satisfies the energy inequality (\ref{energy_ineq0}).

Thus the proof of Theorem \ref{th:1.1} would be complete, if we have shown the uniqueness of ${\bf m}$ in its class when $\nabla{\bf m}_0\in L^2(\mathbb R^n)$.
This part follows from the lemma below.

\subsection{Uniqueness of LLG (\ref{1.1})}  Motivated by the uniqueness theorem on the heat flow of harmonic maps  established
by Huang-Wang \cite{Huang-Wang1}, we will establish that LLG (\ref{1.1}) enjoys the following uniqueness property for initial data in $M^{2,2}(\mathbb R^n)$.
\begin{lemma}\label{uniqueness} There exist $\epsilon_0>0$ and $c_0>0$ depending on $n,\lambda$ such that for ${\bf m}_0\in H^1_*(\mathbb R^n, \mathbb S^2)$
and  $i=1,2$, if ${\bf m}^{(i)}:\mathbb R^n\times [0,+\infty)\to \mathbb S^2$ are  weak solutions of LLG (\ref{1.1}), under the initial data ${\bf m}_0$,
satisfying\\
(i) ${\bf m}^{(i)}\in C^0([0,+\infty), H^1_*(\mathbb R^n,\mathbb S^2))$, $\partial_t {\bf m}^{(i)}\in L^2([0,+\infty), L^2(\mathbb R^n))$,
and $\nabla^2{\bf m}^{(i)}\in L^2_{\rm{loc}}(\mathbb R^n\times [\delta,+\infty))$ for $\delta>0$; and \\
(ii)
\begin{equation}\label{small_morrey2}
\sup_{t\ge 0} \big\|\nabla{\bf m}^{(i)}(t)\big\|_{M^{2,2}(\mathbb R^n)}\le c_0\epsilon_0,
\end{equation}
then ${\bf m}^{(1)}\equiv {\bf m}^{(2)}$ in $\mathbb R^n\times [0,+\infty)$.
\end{lemma}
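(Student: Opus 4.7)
The plan is to run an energy estimate on the difference ${\bf w} := {\bf m}^{(1)}-{\bf m}^{(2)}$, exploiting the smallness of $\|\nabla {\bf m}^{(i)}\|_{M^{2,2}}$ to absorb every nonlinearity into the parabolic dissipation. Since ${\bf m}_0^{(1)}(0)={\bf m}_0^{(2)}(0)={\bf m}_0$, proving $\frac{d}{dt}\|{\bf w}\|_{L^2}^2\le 0$ closes the argument.

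First I would rewrite (\ref{1.1}) in its equivalent form
$\lambda\partial_t{\bf m}+{\bf m}\times\partial_t{\bf m}=(1+\lambda^2)(\Delta{\bf m}+|\nabla{\bf m}|^2{\bf m}),$
or equivalently (after solving for $\partial_t {\bf m}$)
$\partial_t{\bf m}=\lambda\Delta{\bf m}-{\bf m}\times\Delta{\bf m}+\lambda|\nabla{\bf m}|^2{\bf m}.$
Subtracting the two equations and using the elementary identities
${\bf m}^{(1)}\times X-{\bf m}^{(2)}\times X={\bf w}\times X$ and
$|\nabla{\bf m}^{(1)}|^2-|\nabla{\bf m}^{(2)}|^2=\nabla({\bf m}^{(1)}+{\bf m}^{(2)}):\nabla{\bf w},$
I obtain a linear parabolic equation for ${\bf w}$ of the schematic form
$\partial_t{\bf w}-\lambda\Delta{\bf w}=-{\bf m}^{(2)}\times\Delta{\bf w}-{\bf w}\times\Delta{\bf m}^{(1)}
+\lambda\bigl(\nabla({\bf m}^{(1)}+{\bf m}^{(2)}):\nabla{\bf w}\bigr){\bf m}^{(1)}+\lambda|\nabla{\bf m}^{(2)}|^2{\bf w}.$

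Next I would pair this equation with ${\bf w}$ in $L^2(\mathbb{R}^n)$. The term $({\bf w}\times\Delta{\bf m}^{(1)})\cdot{\bf w}$ vanishes pointwise by antisymmetry, while $({\bf m}^{(2)}\times\Delta{\bf w})\cdot{\bf w}$ becomes, after one integration by parts and the fact that $\partial_k{\bf w}\times\partial_k{\bf w}=0$, the bilinear expression $-\sum_k\int\partial_k{\bf m}^{(2)}\cdot(\partial_k{\bf w}\times{\bf w})$. The net outcome is
\begin{equation*}
\tfrac12\tfrac{d}{dt}\|{\bf w}\|_{L^2}^2+\lambda\|\nabla{\bf w}\|_{L^2}^2
\le C\sum_{i=1}^2\int_{\mathbb{R}^n}\bigl(|\nabla{\bf m}^{(i)}||\nabla{\bf w}||{\bf w}|+|\nabla{\bf m}^{(i)}|^2|{\bf w}|^2\bigr)\,dx.
\end{equation*}
To close this, I would apply the Fefferman/Morrey--Hardy-type inequality
$\int_{\mathbb{R}^n}g|f|^2\,dx\lesssim\|g\|_{M^{1,2}(\mathbb{R}^n)}\|\nabla f\|_{L^2(\mathbb{R}^n)}^2$
(valid for $n\ge 2$ under the hypothesis ${\bf w}\in H^1(\mathbb{R}^n)$) with $g=|\nabla{\bf m}^{(i)}|^2$, noting that $\||\nabla{\bf m}^{(i)}|^2\|_{M^{1,2}}=\|\nabla{\bf m}^{(i)}\|_{M^{2,2}}^2$. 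The mixed term is handled by Cauchy--Schwarz. Combined with (\ref{small_morrey2}), this gives
$\tfrac12\tfrac{d}{dt}\|{\bf w}\|_{L^2}^2+\lambda\|\nabla{\bf w}\|_{L^2}^2\le C(c_0\epsilon_0+c_0^2\epsilon_0^2)\|\nabla{\bf w}\|_{L^2}^2,$
so for $c_0\epsilon_0$ small enough the RHS is absorbed, yielding $\tfrac{d}{dt}\|{\bf w}\|_{L^2}^2\le 0$. Since ${\bf w}(0)=0$ in $L^2$, uniqueness follows.

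The main obstacle I foresee is the rigorous justification of the energy identity and the integration by parts under the hypothesized regularity: the solutions lie only in $C^0([0,\infty),H^1_*)$ with $\partial_t{\bf m}^{(i)},\nabla^2{\bf m}^{(i)}\in L^2_{\rm loc}(\mathbb{R}^n\times[\delta,\infty))$. I would handle this by working first on $[\delta,T]$, where Lemma~\ref{e-regularity} yields full smoothness, performing the computation there, and then letting $\delta\to 0^+$ using $\|{\bf w}(\delta)\|_{L^2}\to 0$ from the $H^1_*$ continuity at $t=0$. A secondary technical point is the validity of the Morrey--Hardy inequality in $n=2$, where $M^{2,2}(\mathbb{R}^2)=L^2(\mathbb{R}^2)$ and the classical Hardy inequality degenerates; there one instead uses $\int|\nabla{\bf m}^{(i)}|^2|{\bf w}|^2\le\|\nabla{\bf m}^{(i)}\|_{L^2}^2\|{\bf w}\|_{L^\infty}^{2}$ together with Gagliardo--Nirenberg and the pointwise bound $|{\bf w}|\le 2$ to reach the same conclusion.
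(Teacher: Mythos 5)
Your overall architecture --- subtract the two solutions, pair with the difference ${\bf w}$ in $L^2$, kill the $({\bf w}\times\Delta{\bf m}^{(1)})\cdot{\bf w}$ term by antisymmetry, integrate $({\bf m}^{(2)}\times\Delta{\bf w})\cdot{\bf w}$ by parts, and justify the computation by first regularizing on $[\delta,T]$ via Lemma~\ref{e-regularity} --- matches the paper's. But there is a genuine gap in the absorption step. The Fefferman/Morrey--Hardy inequality you invoke,
$\int_{\mathbb R^n}g|f|^2\,dx\lesssim\|g\|_{M^{1,2}(\mathbb R^n)}\|\nabla f\|^2_{L^2(\mathbb R^n)}$,
is the endpoint $p=1$ case of Fefferman--Phong, and that endpoint is false: the correct hypothesis is $g\in M^{p,2p}$ for some $p>1$, and $M^{1,2}$ alone does not give the bound (this is a well-known failure; only a logarithmic refinement of the $M^{1,2}$ condition would suffice). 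Under the hypotheses of the lemma one only has $\nabla{\bf m}^{(i)}\in M^{2,2}$, hence $|\nabla{\bf m}^{(i)}|^2\in M^{1,2}$, with no higher-$p$ Morrey control a priori, so the absorption into $\lambda\|\nabla{\bf w}\|_{L^2}^2$ cannot be closed this way. Your fallback in $n=2$ is also not tight: $\|\nabla{\bf m}^{(i)}\|_{L^2}^2\|{\bf w}\|_{L^\infty}^2$ is neither small nor of the form $\varepsilon\|\nabla{\bf w}\|^2$ or $C\|{\bf w}\|^2$, and Gagliardo--Nirenberg does not give $\|{\bf w}\|_{L^\infty}\lesssim\|{\bf w}\|_{H^1}$ in two dimensions.

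The paper instead exploits the quantitative output of the $\epsilon$-regularity lemma, namely the pointwise decay
$t^{1/2}\|\nabla{\bf m}^{(i)}(t)\|_{L^\infty(\mathbb R^n)}\le c_0\epsilon_0$, to estimate the dangerous quadratic terms by
$\int|\nabla{\bf m}^{(i)}(t)|^2|{\bf w}(t)|^2\le \frac{C\epsilon_0^2}{t}\int|{\bf w}(t)|^2$.
This costs a non-integrable factor $1/t$, so a plain Gronwall argument from $\|{\bf w}(0)\|_{L^2}=0$ does not close; the paper then shows that $t^{-1/2}\|{\bf w}(t)\|_{L^2}^2$ is non-increasing (once $\epsilon_0$ is small enough that $C\epsilon_0^2<\tfrac12$), and finally that $s^{-1/2}\|{\bf w}(s)\|_{L^2}^2\to 0$ as $s\downarrow 0$ by writing ${\bf w}(\cdot,s)=\int_0^s\partial_t{\bf w}\,dt$ and using $\partial_t{\bf m}^{(i)}\in L^2_{t,x}$. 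Along the way the paper also simplifies the nonlinearity via the identities ${\bf m}^{(1)}\cdot{\bf w}=\tfrac12|{\bf w}|^2$ and ${\bf m}^{(2)}\cdot{\bf w}=-\tfrac12|{\bf w}|^2$, which turn $(|\nabla{\bf m}^{(1)}|^2{\bf m}^{(1)}-|\nabla{\bf m}^{(2)}|^2{\bf m}^{(2)})\cdot{\bf w}$ into a manifestly nonnegative quantity; this is cleaner than your rewriting $|\nabla{\bf m}^{(1)}|^2-|\nabla{\bf m}^{(2)}|^2=\nabla({\bf m}^{(1)}+{\bf m}^{(2)}):\nabla{\bf w}$. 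To repair your argument you should replace the Fefferman--Phong step by this pointwise bound and carry out the weighted Gronwall; otherwise the absorption step does not hold.
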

\begin{proof} The idea is originally due to Huang-Wang \cite{Huang-Wang1} on the heat flow of harmonic maps. Here we adapt it to LLG (\ref{1.1}).
Applying Lemma \ref{e-regularity} to ${\bf m}^{(i)}$, we see that ${\bf m}^{(i)}\in C^\infty(\mathbb R^n\times (0,+\infty))$ and
\begin{equation}\label{grad_est}
t^{\frac12}\big\|\nabla{\bf m}^{(i)}(t)\big\|_{L^\infty(\mathbb R^n)} \le \sup_{\tau\ge 0} \big\|\nabla{\bf m}^{(i)}(\tau)\big\|_{M^{2,2}(\mathbb R^n)}\le c_0\epsilon_0,
\ \forall t>0.
\end{equation}
Set ${\bf m}={\bf m}^{(1)}-{\bf m}^{(2)}$. Then we have
\begin{equation}\label{LLG1}
\partial_t {\bf m}=\lambda \Delta{\bf m}+\lambda\big (|\nabla {\bf m}^{(1)}|^2 {\bf m}^{(1)}-|\nabla {\bf m}^{(2)}|^2 {\bf m}^{(2)}\big)
-\big({\bf m}\times \Delta {\bf m}^{(1)}+{\bf m}^{(2)}\times \Delta{\bf m}\big).
\end{equation}
Since $\Delta {\bf m}^{(i)}(t), \Delta{\bf m}(t)\in L^2_{\rm{loc}}(\mathbb R^n)$ for a.e. $t>0$ and ${\bf m}\in L^2(\mathbb R^n)$, we can multiply
(\ref{LLG1}) by ${\bf m}$ and integrate the resulting equation over $\mathbb R^n$  to obtain
\begin{eqnarray}
\frac{d}{dt}\int_{\mathbb R^n}|{\bf m}|^2+2\lambda \int_{\mathbb R^n}|\nabla{\bf m}|^2&=&2\lambda
\int_{\mathbb R^n} \big (|\nabla {\bf m}^{(1)}|^2 {\bf m}^{(1)}-|\nabla {\bf m}^{(2)}|^2 {\bf m}^{(2)}\big)\cdot{\bf m}
+2\int_{\mathbb R^n} \big(\nabla {\bf m}^{(2)}\times \nabla {\bf m}\big)\cdot{\bf m}\nonumber\\
&=&I+II. \label{energy_est2}
\end{eqnarray}
Observe that
$${\bf m}^1\cdot {\bf m}=\frac12 |{\bf m}|^2, \  {\bf m}^2\cdot {\bf m}=-\frac12 |{\bf m}|^2.$$
This and (\ref{grad_est}) imply
$$I=\lambda
\int_{\mathbb R^n} \big (|\nabla {\bf m}^{(1)}|^2 +|\nabla {\bf m}^{(2)}|^2\big)|{\bf m}|^2
\le \frac{C\epsilon_0^2}{t}\int_{\mathbb R^n} |{\bf m}|^2.
$$
By H\"older's inequality and (\ref{grad_est}), we have
$$|II|\le \lambda\int_{\mathbb R^n}|\nabla{\bf m}|^2+\lambda^{-1}\int_{\mathbb R^n}|\nabla{\bf m}^{(2)}|^2|{\bf m}|^2
\le \lambda\int_{\mathbb R^n}|\nabla{\bf m}|^2+\frac{C\epsilon_0^2}{t}\int_{\mathbb R^n} |{\bf m}|^2.
$$
Putting these estimates into (\ref{energy_est2}) yields
$$
\frac{d}{dt}\int_{\mathbb R^n}|{\bf m}(t)|^2\le \frac{C\epsilon_0^2}{t}\int_{\mathbb R^n} |{\bf m}(t)|^2.
$$
Hence we have
$$
\frac{d}{dt}\Big(t^{-\frac12}\int_{\mathbb R^n}|{\bf m}(t)|^2\Big)
\le \frac{C\epsilon_0^2-\frac12}{t^\frac32}\int_{\mathbb R^n} |{\bf m}(t)|^2\le 0.
$$
This yields
$$t^{-\frac12}\int_{\mathbb R^n}|{\bf m}(t)|^2\le \lim_{s\downarrow 0} s^{-\frac12}\int_{\mathbb R^n}|{\bf m}(s)|^2.$$
On the other hand, since ${\bf m}(0)=0$, we have that $\displaystyle {\bf m}(x,\tau)=\int_0^\tau \partial_t {\bf m}(x,t)\,dt, \ {\rm{a.e.}} \ x\in\mathbb R^n, \tau>0$.
By H\"older's inequality, we have
$$s^{-\frac12}\int_{\mathbb R^n}|{\bf m}(x,s)|^2\,dx\le s^\frac12\int_0^s\int_{\mathbb R^n}|\partial_t{\bf m}(x,t)|^2\,dxdt\le Cs^\frac12\rightarrow
0, \ {\rm{as}}\ s\downarrow 0.$$
Therefore ${\bf m}(x,t)=0$ for a.e. $(x,t)\in \mathbb R^n\times [0,+\infty)$. This completes the proof.
\end{proof}
It is clear that the global solution ${\bf m}$ obtained in Theorem \ref{th:1.1} satisfies all the conditions in Lemma \ref{uniqueness}.
Hence ${\bf m}$ is unique in its own class. The proof of Theorem \ref{th:1.1} is now complete.

\subsection{Stability of the covariant LLG(\ref{3.1}) and the covariant GL(\ref{3.2})}
In this final section, we will show that the covariant LLG equation (\ref{3.1}) and (\ref{3.2}), under the Coulomb gauge, enjoy the following
stability property in the space $X_\infty^p$. This may have its own interest.

\begin{lemma}\label{stable} There exist $\epsilon_0>0$ and $c_0, c_1>0$ depending only on $n, \lambda, p$ with the following property:
For $i=1,2$, if $v_0^{(i)}\in M^{2,2}(\mathbb R^n)$ satisfies
\begin{equation}\label{e-condition10}
\big\|v_0^{(i)}\big\|_{M^{2,2}(\mathbb R^n)}\le\epsilon_0,
\end{equation}
and ${\bf u}^{(i)}=(u_0^{(i)}, u^{(i)}):\mathbb R^n\times [0,+\infty)\to \mathbb C^{n+1}$ is a solution of (\ref{3.1}) and (\ref{3.2})
given by the Duhamel formula (\ref{duhamel1}), 
with initial data $v^{(i)}_0$ and the Coulomb gauge $a^{(i)}$, that satisfies
\begin{equation}\label{x-estimate26}
\big\|u^{(i)}\big\|_{X_\infty^p}\le c_0\epsilon_0,
\end{equation}
for some $p\in (3, \frac{10}3)$.
Then 
\begin{equation}\label{x-stability}
\Big\|u^{(1)}-u^{(2)}\Big\|_{X_\infty^p}\le c_1\Big\|v_0^{(1)}-v_0^{(2)}\Big\|_{M^{2,2}(\mathbb R^n)}.
\end{equation}
\end{lemma}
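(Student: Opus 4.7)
\subsection*{Proof proposal for Lemma 4.11 (stability)}

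The plan is to write the Duhamel difference of the two solutions and estimate each resulting term in the $X_\infty^p$ norm by imitating the nonlinear estimates of Section 4, exploiting polynomial structure. Subtracting the two Duhamel formulas \eqref{duhamel1} applied to $u^{(1)}$ and $u^{(2)}$ gives
\begin{equation*}
u^{(1)}(t)-u^{(2)}(t)=S(t)\bigl(v_0^{(1)}-v_0^{(2)}\bigr)+\sum_{l=1}^{3}\int_0^t S(t-s)\bigl[f^{(l)}\bigl(u^{(1)}(s)\bigr)-f^{(l)}\bigl(u^{(2)}(s)\bigr)\bigr]\,ds.
\end{equation*}
The linear piece is handled directly by Lemma \ref{x-estimate}, giving
$\bigl\|S(\cdot)(v_0^{(1)}-v_0^{(2)})\bigr\|_{X^p_\infty}\lesssim \bigl\|v_0^{(1)}-v_0^{(2)}\bigr\|_{M^{2,2}(\R^n)}$. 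The entire remaining work is to bound the three nonlinear differences.

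The key structural observation is that each $f^{(l)}$ is a homogeneous polynomial expression in $u$ (of degrees $3,3,5$ respectively) once one remembers that the Coulomb gauge potentials $a$ and $a_0^{(1)},a_0^{(2)}$ are determined by $u$ and $\nabla u$ through the Riesz potential $\mathbf I_1$ of bilinear and quartic expressions (see \eqref{4.5}, \eqref{4.6}). Consequently, for each $i\in\{1,2\}$ set $a^{(i)},a_0^{(1),i},a_0^{(2),i}$ as the associated gauge quantities, and for $w:=u^{(1)}-u^{(2)}$ write, via the elementary polynomial identity $A_1\cdots A_d-B_1\cdots B_d=\sum_{k=1}^{d}B_1\cdots B_{k-1}(A_k-B_k)A_{k+1}\cdots A_d$, each $f^{(l)}(u^{(1)})-f^{(l)}(u^{(2)})$ as a sum of multilinear terms, each exactly linear in $w$ (or in one of the differences $a^{(1)}-a^{(2)},\,a_0^{(1),1}-a_0^{(1),2},\,a_0^{(2),1}-a_0^{(2),2}$) and multilinear in factors chosen from $u^{(1)},u^{(2)}$. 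Since $a^{(1)}-a^{(2)}=\mathbf I_1\bigl[\mathrm{Im}(u^{(1)}\overline w)+\mathrm{Im}(w\,\overline{u^{(2)}})\bigr]$ up to constants, and similarly for the $a_0$'s, the Morrey estimates for $\mathbf I_1$ used in Lemma \ref{le:4.1} apply verbatim to yield, for the relevant exponents,
\begin{equation*}
\bigl\|a^{(1)}-a^{(2)}\bigr\|_{M^{\widetilde p,2}(\R^n)}\lesssim\bigl(\|u^{(1)}\|_{M^{p,2}}+\|u^{(2)}\|_{M^{p,2}}\bigr)\|w\|_{M^{p,2}},
\end{equation*}
and analogous bilinear/quadrilinear estimates for the $a_0$ differences.

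With these bilinear bounds at hand, each of the nine estimates in Subsections 4.1.1--4.1.3 (Lemmas \ref{le:5.4}--\ref{le:5.11}) has a direct multilinear analogue in which exactly one factor of $u$ is replaced by $w$ (or where one $a$/$a_0$ factor is replaced by its difference, itself controlled by $w$ times a factor of $u^{(i)}$). Summing these estimates and taking $\sup$ over $t$ yields, for $p\in(3,10/3)$,
\begin{equation*}
\|w\|_{X^p_\infty}\le C\bigl\|v_0^{(1)}-v_0^{(2)}\bigr\|_{M^{2,2}(\R^n)}+C\Bigl(\|u^{(1)}\|_{Y^p_\infty}^{2}+\|u^{(2)}\|_{Y^p_\infty}^{2}+\|u^{(1)}\|_{Y^p_\infty}^{4}+\|u^{(2)}\|_{Y^p_\infty}^{4}\Bigr)\|w\|_{Y^p_\infty}.
\end{equation*}
Using $\|u^{(i)}\|_{Y^p_\infty}\le\|u^{(i)}\|_{X^p_\infty}\le c_0\epsilon_0$ and $\|w\|_{Y^p_\infty}\le\|w\|_{X^p_\infty}$, we fix $\epsilon_0$ so small that $C\bigl(2(c_0\epsilon_0)^2+2(c_0\epsilon_0)^4\bigr)\le\tfrac12$; absorbing to the left gives the advertised bound with $c_1=2C$.

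The main obstacle is bookkeeping rather than conceptual: each $f^{(l)}$ is polynomial not only in $u$ itself but in the nonlocal quantities $a,a_0^{(1)},a_0^{(2)}$, so one must verify that the Riesz potential estimates from Lemma \ref{le:4.1} are genuinely bilinear and split correctly over the polarization identity. Once that is confirmed, the nine difference estimates are mechanical repetitions of Lemmas \ref{le:5.4}--\ref{le:5.11}, and the Beta function conditions $p\in(3,10/3)$ are exactly the ones already verified there. No new temporal integrals arise, so the smallness threshold and the final absorption argument carry through without change.
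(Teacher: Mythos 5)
Your proposal is correct and follows essentially the same approach as the paper: subtract the Duhamel formulas, exploit the multilinear structure of $f^{(1)},f^{(2)},f^{(3)}$ together with the bilinear Morrey-space estimates for the Riesz potentials defining the gauge differences $a(u^{(1)})-a(u^{(2)})$, $a_0^{(j)}(u^{(1)})-a_0^{(j)}(u^{(2)})$, then apply Lemmas 4.1--4.9 to the difference terms and absorb the small factor for $\epsilon_0$ small. The paper simply spells out the individual Morrey-norm bounds for the gauge differences more explicitly, but the decomposition and absorption argument are the same.
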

\begin{proof} To emphasize the dependence of the Coulomb gauge $a^{(i)}$ on $u^{(i)}$, set $a(u^{i})=a^{(i)}$  as the Coulomb gauge
associated with $u^{(i)}$. Also set $a(u)=a(u^{(1)})-a(u^{(2)})$, $a_0^{(1)}(u)=a_0^{(1)}(u^{(1)})-a_0^{(1)}(u^{(2)})$, $a_0^{(2)}(u)=a_0^{(2)}(u^{(1)})-a_0^{(2)}(u^{(2)})$,
$u=u^{(1)}-u^{(2)}$, and $v_0=v_0^{(1)}-v_0^{(2)}$.
By Duhamel's formula (\ref{duhamel1}), we have
\begin{equation}\label{duhamel6}
u(t)=S(t) v_0+\sum_{l=1}^3 S*\big(f^{(l)}(u^{(1)})-f^{(l)}(u^{(2)})\big)(t).
\end{equation}
It is clear that Lemma 3.2 yields
\begin{equation}\label{x-estimate20}
\Big\|S(t)v_0\Big\|_{X_\infty^p}\le c_1\big\|v_0\big\|_{M^{2,2}(\mathbb R^n)}.
\end{equation}
Observe that
$$\Big|f^{(1)}(u^{(1)})-f^{(1)}(u^{(2)})\Big|\lesssim \big(|u^{(1)}|^2+|u^{(2)}|^2\big)|u|,$$
so that we can apply Lemma 4.1, Lemma 4.2, and Lemma 4.3, and (\ref{x-estimate26})  to get
\begin{equation}\label{x-estimate10}
\Big\|S*\big(f^{(l)}(u^{(1)})-f^{(l)}(u^{(2)})\big)\Big\|_{X_\infty^p}
\lesssim \Big(\big\|u^{(1)}\big\|_{X_\infty^p}^2+\big\|u^{(2)}\big\|_{X_\infty^p}^2\Big)\big\|u\big\|_{X^p_\infty}
\lesssim \epsilon_0^2 \big\|u\big\|_{X^p_\infty}.
\end{equation}
Observe
$$\Big|f^{(2}(u^{(1)})-f^{(2)}(u^{(2)})\Big|\lesssim |a(u^{(1)})||\nabla u|+|a(u)||\nabla u^{(2)}|+|a_0^{(1)}(u^{(1)})|u|+|a_0^{(1)}(u)||u^{(2)}|.$$
Applying Lemma 3.2, we find that
$$a(u^{(i)}), a(u)\in M^{\frac{2p}{4-p},2}(\mathbb R^n),\  a_0^{(1)}(u^{(1)}), a_0^{(1)}(u)\in
M^{p,2}(\mathbb R^n), \ {\rm{and}} \ a_0^{(2)}(u^{(1)}), a_0^{(2)}(u)\in
M^{\frac{p}{4-p},2}(\mathbb R^n),$$
along with the estimates:
$$\big\|a(u^{(i)})\big\|_{M^{\frac{2p}{4-p},2}(\mathbb R^n)}\lesssim \big\|u^{(i)}\big\|_{M^{p,2}(\mathbb R^n)}^2, \ i=1,2, $$
$$\big\|a(u)\big\|_{M^{\frac{2p}{4-p},2}(\mathbb R^n)}\lesssim
\Big( \big\|u^{(1)}\big\|_{M^{p,2}(\mathbb R^n)}+\big\|u^{(2)}\big\|_{M^{p,2}(\mathbb R^n)}\Big)\big\|u\big\|_{M^{p,2}(\mathbb R^n)},$$
$$\Big\|a_0^{(1)}(u^{(1)})\Big\|_{M^{p,2}(\mathbb R^n)}\lesssim \big\|u^{(1)}\big\|_{M^{p,2}(\mathbb R^n)}  \big\|\nabla u^{(1)}\big\|_{M^{2,2}(\mathbb R^n)},$$
$$\Big\|a_0^{(1)}(u)\Big\|_{M^{p,2}(\mathbb R^n)}\lesssim
\Big(\big\|u\big\|_{M^{p,2}(\mathbb R^n)}  \big\|\nabla u^{(1)}\big\|_{M^{2,2}(\mathbb R^n)}+
\big\|u^{(2)}\big\|_{M^{p,2}(\mathbb R^n)}  \big\|\nabla u\big\|_{M^{2,2}(\mathbb R^n)}\Big),$$
$$\Big\|a_0^{(2)}(u^{(1)})\Big\|_{M^{\frac{p}{4-p},2}(\mathbb R^n)}
\lesssim \big\|u^{(1)}\big\|_{M^{p,2}(\mathbb R^n)}^4,$$
and
$$
\Big\|a_0^{(2)}(u)\Big\|_{M^{\frac{p}{4-p},2}(\mathbb R^n)}
\lesssim \Big(\big\|u^{(1)}\big\|_{M^{p,2}(\mathbb R^n)}^3+\big\|u^{(2)}\big\|_{M^{p,2}(\mathbb R^n)}^3\Big)
\big\|u\big\|_{M^{p,2}(\mathbb R^n)}.
$$
Thus by Lemma 4.4, Lemma 4.5, and Lemma 4.6 we have
\begin{equation}\label{x-estimate11}
\Big\|S*\big(f^{(2}(u^{(1)})-f^{(2)}(u^{(2)})\big)\Big\|_{X_\infty^p}\lesssim \Big(\big\|u^{(1)}\big\|_{X_\infty^p}^2+\big\|u^{(2)}\big\|_{X_\infty^p}^2\Big)\big\|u\big\|_{X^p_\infty}
\le C\epsilon_0^2 \big\|u\big\|_{X^p_\infty}.
\end{equation}
Observe
$$\Big|f^{(3}(u^{(1)})-f^{(3)}(u^{(2)})\Big|\lesssim \big|a(u^{(1)})\big|^2|u|+|a(u)|\big[|a(u^{(1)})|+|a(u^{(2)})|\big]|u^{(2)}|+\big|a_0^{(2)}(u^{(1)})\big||u|
+\big|a_0^{(2)}(u)\big||u^{(2)}|.$$
Thus by Lemma 4.7, Lemma 4.8, and Lemma 4.9 we have
\begin{equation}
\label{x-estimate12}
\Big\|S*\big(f^{(3}(u^{(1)})-f^{(3)}(u^{(2)})\big)\Big\|_{X_\infty^p}
\lesssim \Big(\big\|u^{(1)}\big\|_{X_\infty^p}^4+\big\|u^{(2)}\big\|_{X_\infty^p}^4\Big)
\big\|u\big\|_{X_\infty^p}\le C\epsilon_0^4\big\|u\big\|_{X^p_\infty}.
\end{equation}
Substituting (\ref{x-estimate10}), (\ref{x-estimate11}), and (\ref{x-estimate12}) into (\ref{duhamel6}), we arrive at
\begin{equation}\label{x-estimate13}
\big\|u\big\|_{X_\infty^p}\le \frac{c_1}2\big\|v_0\big\|_{M^{2,2}(\mathbb R^n)}+C\big(\epsilon^2_0+\epsilon_0^4\big)\big\|u\big\|_{X_\infty^p}.
\end{equation}
This implies (\ref{x-stability}), provided that $\epsilon_0>0$ is chosen to be sufficiently small.
The proof is complete.
\end{proof}

\bigskip
\noindent{\bf Acknowledgments}. J. Lin is partially supported by NSF of China (Grant 11001085) and the Ph.D. Programs Foundation of Minstry of Education of China (Grant 20100172120026)
B. Lai  is partially supported by NSF of China (Grants 11201119 and 11126155).
C. Wang  is partially supported by  NSF grants 1001115 and 1265574, NSF of China grant
11128102, and a Simons Fellowship in Mathematics. The work was completed while both J. Lin and B. Lai visited Department of Mathematics,
University of Kentucky. They would like to thank the Department for its hospitality and excellent research environment. 

%\newpage

\end{document}